\documentclass[review,3p,square]{elsarticle}
\usepackage{amssymb}
\usepackage{amsmath}
\usepackage{amsthm}
\usepackage{stmaryrd}
\usepackage{natbib}
\usepackage{bbm}
\usepackage{amsmath}
\allowdisplaybreaks[4]
\usepackage{enumitem}
\usepackage{fancyhdr}
\usepackage{hyperref}
\hypersetup{%
  pdftitle={BSDEs},%
  pdfsubject={BSDEs},%
  pdfauthor={YaQi Zhang, ShengJun FAN},%
  pdfkeywords={BSDEs},%
  pdfstartview=FitH,%
  CJKbookmarks=true,%
  bookmarksnumbered=true,%
  bookmarksopen=true,%
  colorlinks=true, linkcolor=blue, urlcolor=blue, citecolor=blue, %
}

\usepackage{cleveref}
\crefname{thm}{Theorem}{Theorems}
\crefname{pro}{Proposition}{Propositions}
\crefname{lem}{Lemma}{Lemmas}
\crefname{rmk}{Remark}{Remarks}
\crefname{cor}{Corollary}{Corollaries}
\crefname{dfn}{Definition}{Definitions}
\crefname{ex}{Example}{Examples}
\crefname{section}{Section}{Sections}
\crefname{subsection}{Subsection}{Subsections}

\newcommand{\eps}{\varepsilon}

\newcommand{\To}{\rightarrow}
\newcommand{\as}{{\rm d}\mathbb{P}\times{\rm d} t-a.e.}
\newcommand{\ps}{\mathbb{P}-a.s.}

\newcommand{\essinf}{\mathop{\operatorname{ess\,inf}}}

\newcommand{\F}{\mathcal{F}}
\newcommand{\E}{\mathbb{E}}

\newcommand{\D}{\mathbb{D}}

\newcommand{\hcal}{\mathcal{H}}

\newcommand{\dif}{{\rm d}}
\newcommand{\T}{[0,T]}

\newcommand{\R}{{\mathbb R}}
\newcommand{\Q}{{\mathbb Q}}
\newcommand{\RE}{\forall}

\newcommand {\Dis}{\displaystyle}

\newtheorem{thm}{Theorem}[section]

\newtheorem{pro}[thm]{Proposition}
\newtheorem{rmk}[thm]{Remark}
\newtheorem{cor}[thm]{Corollary}
\newtheorem{dfn}[thm]{Definition}
\newtheorem{ex}[thm]{Example}

\journal{arXiv}
\begin{document}
\begin{frontmatter}

\title{{Solvability of BSDEs with possibly unbounded stochastic coefficients \\ on a general weighted $L^p$ space}\tnoteref{found}}
\tnotetext[found]{This work is partially supported by National Natural Science Foundation of China (No. 12171471), by Lebesgue Center of Mathematics ``Investissements d'avenir" program-ANR-11-LABX-0020-01, by CAESARS-ANR-15-CE05-0024 and by MFG-ANR-16-CE40-0015-01.
\vspace{0.2cm}}

\author[Fan]{Yaqi Zhang} \ead{TS22080028A31@cumt.edu.cn}
\author[Fan]{Xinying Li} \ead{lixinyingcumt@163.com}
\author[Hu]{Ying Hu} \ead{ying.hu@univ-rennes1.fr}
\author[Fan]{Shengjun Fan\corref{cor}} \ead{shengjunfan@cumt.edu.cn}\vspace{-0.6cm}

\address[Fan]{School of Mathematics, China University of Mining and Technology, Xuzhou 221116, China}

\address[Hu]{Univ. Rennes, CNRS, IRMAR-UMR6625, F-35000, Rennes, France}

\cortext[cor]{Corresponding \vspace{0.2cm}author}
\vspace{0.2cm}

\begin{abstract}
This paper is devoted to solving a multidimensional backward stochastic differential equation (BSDE for short) with a general random terminal time $\tau$ taking values in $[0,+\infty]$. The generator $g$ of such BSDE satisfies a stochastic monotonicity condition in the state variable $y$ and a stochastic Lipschitz condition in the state variable $z$ with possibly unbounded stochastic coefficients $\mu_\cdot\in\R$ and $\nu_\cdot\in\R_+$ satisfying $\int_0^\tau (|\mu_t|+\nu^2_t) {\rm d}t<+\infty$, along with a very general growth in $y$ that is more easily verified and weaker than existing ones. Let $p>1$ be a given constant and $\rho_\cdot\geq \mu_\cdot+\frac{\theta}{2[1\wedge(p-1)]}\nu_\cdot^2$ be a given real-valued process for some constant $\theta>1$ such that $\int_0^\tau |\rho_t|{\rm d}t<+\infty$. In a general weighted $L^p$ space with a weighted factor $e^{\int_0^t \rho_r{\rm d}r}$, we establish an existence and uniqueness result for the adapted solution of previous BSDE when the terminal value satisfies an associated weighted integrability condition, broadening the scope of the process $\rho_\cdot$ in the weighted factor and thereby unifying and strengthening some corresponding existing results obtained in \citet{DarlingandPardoux1997}, \citet{Briand2003}, \citet{LiFan2024SD} and \citet{Li2025}. Some innovative ideas are presented in order to address the general weighted space and the very general growth condition. As applications, we prove the existence of viscosity solutions for parabolic and elliptic PDEs linked with previous BSDEs under some general assumptions on their nonlinear terms, and establish a dual representation of an unbounded dynamic concave utility defined on a general weighted $L^p$ space via the weighted $L^p$ solutions of previous \vspace{0.3cm}BSDEs.
\end{abstract}

\begin{keyword}
Backward stochastic differential equation \sep Weighted $L^p$ space\sep Dynamic concave utility \sep\\
\hspace*{1.85cm} Unbounded stochastic coefficient \sep Viscosity solution\vspace{0.2cm}

\MSC[2021] 60H10, 60H30\vspace{0.2cm}
\end{keyword}

\end{frontmatter}
\vspace{-0.4cm}

\section{Introduction}
\label{sec:1-Introduction}
\setcounter{equation}{0}
For $a,b\in \R$, define $a\vee b=\max\{a,b\}$, $a\wedge b=\min\{a,b\}$, $a^+=a\vee 0$ and $a^-=(-a)^+$. Fix two positive integers $k$ and $d$. Let $(\Omega,\F, \mathbb{P})$ be a completed probability space carrying a standard $d$-dimensional Brownian motion $(B_t)_{t\geq0}$, $(\F_t)_{t\geq0}$ be the natural augmented filtration generated by $B_\cdot$ and $\F:=\F_\tau$ with $\tau$ being a general $(\F_t)$-stopping time taking values in $[0,+\infty]$. We are concerned with a classical multidimensional backward stochastic differential equation (BSDE) of the following form:
\begin{align}\label{BSDE1.1}
  y_t=\xi+\int_t^\tau g(s,y_s,z_s){\rm d}s-\int_t^\tau z_s{\rm d}B_s, \ \ t\in[0,\tau],
\end{align}
where the terminal value $\xi$ is an $\F_\tau$-measurable $k$-dimensional random vector and the random function
$$g(\omega,t,y,z): \Omega\times[0,\tau]\times \R^k\times \R^{k\times d}\mapsto \R^k$$
is $(\F_t)$-progressively measurable for each $(y,z)$, called the generator of BSDE \eqref{BSDE1.1}. Denote BSDE \eqref{BSDE1.1} with parameters $(\xi,\tau, g)$ by BSDE$(\xi,\tau, g)$. A pair of $(\F_t)$-progressively measurable processes $(y_t,z_t)_{t\in [0,\tau]}$ taking values in $\R^k \times R^{k\times d}$ is called an adapted solution to BSDE \eqref{BSDE1.1} if $\ps$, $y_\cdot$ is continuous, $\int_0^{\tau} \left(|g(t,y_t,z_t)|+|z_t|^2\right){\rm d}t<+\infty$, and \eqref{BSDE1.1} is satisfied. It is well known that \citet{PardouxPeng1990SCL} first studied nonlinear BSDEs and established an existence and uniqueness result for the $L^2$ solution of a multidimensional BSDE with a positive constant terminal time $T$, where both the terminal value $\xi$ and the process $\{g(t,0,0)\}_{t\in[0,T]}$ are square-integrable and the generator $g$ is uniformly Lipschitz continuous in the state variables $(y,z)$: there exist two constants $\mu$ and $\nu$ such that $\as$, for each $(y_1,y_2,z_1,z_2)\in \R^k\times\R^k\times\R^{k\times d}\times\R^{k\times d}$,
\begin{align}\label{eq:1.2}
|g(\omega,t,y_1,z_1)-g(\omega,t,y_2,z_2)|\leq \mu |y_1-y_2|+\nu |z_1-z_2|.
\end{align}
This work laid the foundation for the subsequent development on the BSDE theory. Over the past three decades, BSDEs have been extensively studied and applied in a wide range of fields including partial differential equations (PDEs), mathematical finance, and stochastic control and game. This is evidenced by numerous subsequent works, such as \cite{KarouiPengQuenez1997,DarlingandPardoux1997,
Pardoux1999,HuYing2005,BriandandConfortola2008,DelbaenTang2010,
PardouxandRascanu2014,Bahlali2015,FanHu2021SPA,Tian2023SIAM} and their references.

The uniform Lipschitz continuity condition \eqref{eq:1.2} employed in \cite{PardouxPeng1990SCL} have been greatly relaxed for intensive study of BSDEs with a constant terminal time. See \cite{Mao1995,Lepeltier1998,Pardoux1999,Kobylanski2000,Liu and Ren2002,Briand2003,Briand2006,Briand2007,Briand2008,FanJiang2010,
DelbaenHuBao2011,FanJiang2013,Fan2018,FanHu2021SPA,FanHuTang2023} for details. Among others, we would like to mention the monotonicity condition along with a general growth condition of $g$ in $y$ used in \cite{DarlingandPardoux1997,BriandandHu1998,Pardoux1999} and the uniform continuity condition of $g$ in $z$ employed in \cite{Ham2003,Jia2010}. For the study of BSDEs with a possibly unbounded random terminal time $\tau$, the condition \eqref{eq:1.2} usually needs to be replaced with the stochastic monotonicity condition in $y$ and stochastic Lipschitz condition in $z$: $\mathbb{P}-a.s.$, for each $(y_1,y_2,z_1,z_2)\in \R^k\times\R^k\times\R^{k\times d}\times\R^{k\times d}$,
\begin{align}\label{eq:1.3}
\langle y_1-y_2,~g(\omega,t,y_1,z_1)-g(\omega,t,y_2,z_2)\rangle\leq \mu_t(\omega) |y_1-y_2|^2+\nu_t(\omega) |y_1-y_2||z_1-z_2|,\ \ t\in [0,\tau(\omega)]
\end{align}
where both $\mu_\cdot\in \R$ and $\nu_\cdot\in\R_+$ are two $(\F_t)$-progressively measurable processes such that
\begin{align}\label{eq:1.4}
\ps,\ \ \ \int_0^\tau \left(|\mu_t|+\nu_t^2\right){\rm d}t<+\infty,
\end{align}
along with a general growth condition of $g$ in $y$. Generally speaking, some extra moment integrability conditions on $\int_0^\tau \left(\mu_t^+ +\nu_t^2\right){\rm d}t$ are also required in order to obtain existence and uniqueness of the usual adapted solution. For instance, it is forced to be bounded by a constant $M>0$ in \cite{ZChenBWang2000JAMS,Morlais2009,FanJiang2010SPL,FanJiangTian2011SPA,
FanWangXiao2015,Xiao2015,XiaoandFan2017,Liu2020,LiXuFan2021PUQR}, and it is supposed in \cite{Yong2006,Briand2008,Bahlali2015} to satisfy some certain moment integrability weaker than the boundedness. Instead of imposing these extra moment integrability conditions, another strategy is to study existence and uniqueness of the adapted solution in suitable weighted spaces with certain weighted factors associated with processes $\mu_\cdot$ and $\nu_\cdot$. See \cite{KarouiHuang1997,DarlingandPardoux1997, BriandandHu1998,Pardoux1999,BenderKohlmann2000,
Bahlali2004,WangRanChen2007,Bahlali2015,Li2023,LiFan2024SD,Li2025} for different weighted spaces and weighted factors. The present paper focuses on solving BSDEs with a possibly unbounded random terminal time $\tau$ and a generator $g$ satisfying \eqref{eq:1.3}-\eqref{eq:1.4} on a more general weighted space.

Let us briefly illustrate the theoretical motivation of this study. Let the generator $g$ satisfy the following general growth condition in the state variable $y$: there exist a constant $\beta>1$ and an $(\F_t)$-progressively real-valued process satisfying $\essinf\limits_{t\in[0,\tau]}\alpha_t>0$ such that
\begin{align}\label{eq:1.5}
\forall r\in \R_+,\ \ \ \E\left[\int_0^\tau e^{\beta \int_0^t \mu^+_s{\rm d}s}\psi(t,r\alpha_t)dt\right]<+\infty
\end{align}
with \vspace{-0.1cm}
\begin{align}\label{eq:1.6}
\psi(t,r):=\sup_{|y|\leq r} \left|g(t,y,0)-g(t,0,0)\right|.
\end{align}
Very recently, it was proved in \citet{Li2025} and \citet{LiFan2024SD} that if the generator $g$ satisfies \eqref{eq:1.3}-\eqref{eq:1.5} and the following integrability condition holds: there exists a constant $p>1$ such that
\begin{align}\label{eq:1.7}
\E\left[e^{p\int_0^\tau \rho_s\dif s}|\xi|^p+\left(\int_0^\tau e^{\int_0^s \rho_r \dif r} |g(s,0,0)|{\rm d}s\right)^{p}\right]<+\infty
\end{align}
with for some constant $\theta>1$, the process\vspace{-0.1cm}
\begin{align}\label{eq:1.8}
\rho_t:=\beta \mu^+_t +\frac{\theta}{2[1\wedge(p-1)]}\nu_t^2,\ \ t\in [0,\tau],
\end{align}
then BSDE$(\xi,\tau,g)$ admits a unique adapted solution $(y_t,z_t)_{t\in [0,\tau]}$ belonging to a weighted $L^p$ space with the weighted factor $e^{\int_0^t \rho_r \dif r}$ (see $H_\tau^p(\rho_\cdot;\R^{k}\times\R^{k\times d})$ in Section 2 for details). On the other hand, it was shown in \citet{DarlingandPardoux1997,Pardoux1999,Briand2003} that when the terminal time $\tau$ is finite (i.e., $P(\tau<+\infty)=1$) and the processes $\mu_\cdot$ and $\nu_\cdot$ appearing in \eqref{eq:1.3}-\eqref{eq:1.4} are degenerated to two constants $\mu\in\R$ and $\nu\in\R_+$, if \eqref{eq:1.5} is replaced by some appropriate growth conditions of $g$ in $y$ and the process $\rho_\cdot$ in \eqref{eq:1.7} is replaced by a constant $\rho$ satisfying
\begin{align}\label{eq:1.9}
\rho>\mu+\frac{1}{2(p-1)}\nu^2
\end{align}
for some constant $p\in (1,2]$, then the previous assertion remains true. We emphasize that the process $\rho_\cdot$ in \eqref{eq:1.8} is always nonnegative, while the constants $\mu$ and then $\rho$  in \eqref{eq:1.9} can take negative values. Observe that \eqref{eq:1.9} is equivalent to in the case of $\nu\neq 0$,
\begin{align}\label{eq:1.10}
\rho\geq \mu+\frac{\theta}{2[1\wedge (p-1)]}\nu^2
\end{align}
for some constant $\theta>1$, and $\rho>\mu$ in the case of $\nu=0$. Now, let $\tau$ be a possibly infinite terminal time and $\rho_\cdot$ be an $(\F_t)$-progressively measurable real-valued process satisfying $\int_0^\tau |\rho_t|{\rm d}t<+\infty$ and
\begin{align}\label{eq:1.11}
\rho_t\geq \mu_t +\frac{\theta}{2[1\wedge(p-1)]}\nu_t^2,\ \ t\in [0,\tau]
\end{align}
for two constants $p>1$ and $\theta>1$. Then, a natural question is asked: when the conditions \eqref{eq:1.3}-\eqref{eq:1.7} with \eqref{eq:1.11} instead of \eqref{eq:1.8} are fulfilled, can we expect that BSDE$(\xi,\tau,g)$ admits a unique adapted solution $(y_t,z_t)_{t\in [0,\tau]}$ in the space of $H_\tau^p(\rho_\cdot;\R^{k}\times\R^{k\times d})$? In addition, it is very strange in our opinion that the term ${\rm e}^{\beta\int_0^\tau \mu^+_s{\rm d}s}$ enters into the growth condition \eqref{eq:1.5}, and then another question is asked: can the condition \eqref{eq:1.5} be improved? In this paper, we will give an affirmative answer to these two questions.

More specifically, it is proved in this paper that when the generator $g$ satisfies \eqref{eq:1.3} and \eqref{eq:1.4} along with a very general growth condition in the state variable $y$: there exists an $(\F_t)$-progressively real-valued process $(\alpha_t)_{t\in [0,\tau]}$ satisfying $\essinf\limits_{t\in[0,\tau]}\alpha_t>0$ such that
\begin{align}\label{eq:1.12}
\forall r\in \R_+,\ \ \ \E\left[\int_0^\tau \alpha_t\psi(t,r\alpha_t)dt\right]<+\infty
\end{align}
with the random function $\psi(\cdot)$ being defined in \eqref{eq:1.6}, if the integrability condition \eqref{eq:1.7} with \eqref{eq:1.11} is fulfilled, then BSDE$(\xi,\tau,g)$ admits a unique adapted solution $(y_t,z_t)_{t\in [0,\tau]}$ in the general weighted $L^p$ space of $H_\tau^p(\rho_\cdot;\R^{k}\times\R^{k\times d})$. See \cref{thm:3.1} in Section 3.1 for details. It is noteworthy that the growth condition \eqref{eq:1.12} is more easily verified and weaker than \eqref{eq:1.5}, and that the process $\rho_\cdot$ and the constant $\rho$ satisfying respectively \eqref{eq:1.8} and \eqref{eq:1.10} must satisfy \eqref{eq:1.11}. Hence, \cref{thm:3.1} unifies and strengthens those corresponding results obtained in \citet{DarlingandPardoux1997}, \citet{LiFan2024SD} and \citet{Li2025}, and then many other existing ones. Under some general assumptions on the generator $g$ and the terminal time $\tau$, several unknown existence and uniqueness results on the usual $L^p$ solution and the bounded solution of BSDE$(\xi,\tau,g)$ are also derived as corollaries of \cref{thm:3.1}, and several examples and remarks are provided to demonstrate innovativeness of our theoretical results. See those corollaries, remarks and examples in Section 3 for more details. In particular, \cref{ex3.5} indicates necessariness of the growth condition like \eqref{eq:1.12} for existence of the adapted solution.

The proof of \cref{thm:3.1} is accomplished along the main line drawn in the proof of \cite[Theorem 3.1]{Li2025} and \cite[Theorem 3.1]{LiFan2024SD}, and needs to employ systemically some methods involving the delicate truncation technique, the approach proving convergence of the sequence via a priori estimates, and the contract mapping argument. In order to tackle the more general weighted factor and the more general growth of the generator $g$ in $y$, we establish new a priori estimates, adopt novel truncation forms, and construct delicate auxiliary functions. Particularly, it is for the first time explored in the proof of \cref{thm:3.1} that our elementary assumptions $\essinf\limits_{t\in[0,\tau]}\alpha_t>0$ and $\int_0^\tau (|\rho_t|+|\mu_t|+\nu_t^2)\dif t<+\infty$ seem to be necessary for existence of the adapted solution. More details are available in Section 2.3 and Section 4.

As an application of \cref{thm:3.1}, under some general assumptions we derive nonlinear Feynman-Kac formulas for both parabolic and elliptic PDEs. More specifically, it is verified in Section 5 that a viscosity solution for such PDE can be given via the adapted solutions on a general weighted $L^p$ space of the associated BSDEs coupled with a SDE. See \cref{thm:5.1}, \cref{rmk:5.2} and \cref{thm:5.4} for more details. The nonlinear term of the underlying PDE satisfies a nonuniform monotonicity condition with respect to the solution and a nonuniform Lipschitz condition with respect to the gradient of the solution with the monotonicity coefficient and the Lipschitz coefficient both depending on the state as in \citet{PardouxandRascanu2014,Bahlali2015,Li2025}. The salient feature of the present work lies in that the above-mentioned monotonicity coefficient can take values in $\R$, both the monotonicity coefficient and the Lipschitz coefficient can admit an arbitrarily growth in the state, and only a certain combination of the monotonicity coefficient and the Lipschitz coefficient needs to be dominated from above by a constant or a quadratic function. As a result, some existing results are improved by ours.

As another application of our theoretical results, in the sprit of \citet{FanHuTang2025} we study a class of dynamic concave utilities defined on a general weighted $L^p$ space via BSDEs with possibly unbounded stochastic coefficients. Let us roughly state our main assertion. Assume that $\rho_\cdot\in\R$, $\mu_\cdot\in\R$ and $\nu_\cdot\in\R_+$ are three $(\F_t)$-progressively measurable processes satisfying $\int_0^\tau (|\rho_t|+|\mu_t|+\nu_t^2)\dif t<+\infty$ and \eqref{eq:1.11} for two constants $p>1$ and $\theta>1$, and that the random core function
$$
f(\omega,t,q):\Omega \times [0,\tau]\times \R^{1\times d}\mapsto \R_+\cup\{+\infty\}
$$
is $(\F_t)$-progressively measurable for each $q\in\R^{1\times d}$ and satisfies that ${\rm d}\mathbb{P}\times{\rm d} t-a.e.$, $f(\omega,t,\cdot)$ is proper, lower semi-continuous, convex and $f(\omega,t,q)\equiv +\infty$ when $|q|>\nu_t(\omega)$. For each $\F_\tau$-measurable real-valued random variable $\xi$ such that $\E[{\rm e}^{p\int_0^\tau \rho_s {\rm d}s} |\xi|^p]<+\infty$, that is $\xi\in L_\tau^p(\rho_\cdot;\R)$, define
\begin{align}\label{eq:1.13}
U_{t\wedge \tau}(\xi):=e^{-\int_0^{t\wedge \tau}\mu_r\dif r }\essinf\limits_{q\in\hcal(\xi,f,\mu_\cdot,\nu_\cdot)}\E_{\Q^q}
\Big[e^{\int_0^\tau \mu_r\dif r}\xi+\int_{t\wedge \tau}^{\tau}e^{\int_0^s \mu_r\dif r}f(s,q_s)\dif s\bigg|\F_{t\wedge \tau}\Big], \ \ t\geq 0,
\end{align}
where $\hcal(\xi,f,\mu_\cdot,\nu_\cdot)$ is a set of density processes $(q_t)_{t\in [0,\tau]}$ of probability measures $Q^q$ equivalent to $\mathbb{P}$ associated with $(\xi,f,\mu_\cdot,\nu_\cdot)$. With the help of \cref{thm:3.1}, it is proved in \cref{thm:6.1} of Section 6 that there exists an appropriate set $\hcal(\xi,f,\mu_\cdot,\nu_\cdot)$ associated with $(\xi,f,\mu_\cdot,\nu_\cdot)$ such that for each $\xi\in L_\tau^p(\rho_\cdot;\R)$, the process $\{U_t(\xi),~ t\in [0,\tau]\}$ defined in \eqref{eq:1.13} can be
represented as the value process $y_\cdot$ of the unique adapted solution $(y_t,z_t)_{t\in[0,\tau]}$ in $H_\tau^p(\rho_\cdot;\R\times\R^{1\times d})$ of the following BSDE
\begin{align}\label{eq:1.14}
 y_t=\xi+\int_t^\tau [\mu_s y_s-g(s,-z_s)] {\rm d}s-\int_t^\tau z_s{\rm d}B_s, \ \ t\in[0,\tau],
\end{align}
where the random function $g$ is the Legendre-Fenchel transform of $f$, i.e.,
\begin{align*}
g(\omega,t,z):=\sup_{q\in\R^{1\times d}}\big(q\cdot z-f(\omega,t,q)\big),~~~ (\omega,t,z)\in\Omega\times[0,\tau(\omega)]\times\R^{1\times d},
\end{align*}
and then the operator $\{U_t(\cdot),t\in[0,\tau]\}$ constitutes a dynamic concave utility defined on $L_\tau^p(\rho_\cdot;\R)$. Note that the generator of BSDE \eqref{eq:1.14} satisfies a stochastic monotonicity condition in $y$ and a stochastic Lipschitz condition in $z$ with possibly unbounded stochastic coefficients $\mu_\cdot$ and $\nu_\cdot$. Moreover, in \cref{thm:6.1} we verify the accessibility of the infimum in \eqref{eq:1.13} under some extra mild conditions by developing novel methods and techniques. To the best of our knowledge, this is the first work to apply the adapted solution in a weighted space of BSDEs with stochastic coefficients to the study on dynamic concave utilities.

The remainder of this paper is organized as follows. Section 2 contains some notations used later, an inspiration example illustrating why we choose the aforementioned weighted factor, and several a priori estimates on the weighted $L^p$ solutions of BSDEs with possibly unbounded stochastic coefficients. In Section 3, we state the main existence and uniqueness result of this paper--\cref{thm:3.1}, prove the uniqueness part, outline the proof of the existence part, and verify a continuous dependence property and a stability theorem for the weighted $L^p$ solution of BSDEs. Some corollaries, remarks and examples are also provided in Section 3 to illustrate the novelty of our study. Section 4 is devoted to the detailed proof of the existence part of \cref{thm:3.1}. In section 5, we establish the existence of viscosity solutions for the associated parabolic and elliptic PDEs. Finally, in Section 6 we demonstrate the dual representation of a unbounded dynamic concave utility defined on $L_\tau^p(\rho_\cdot;\R)$ via the linked BSDEs.

\section{Preliminaries}
\setcounter{equation}{0}

In this section, we initiate with the introduction of some notations, followed by the presentation of an inspirational example, and then by some a priori estimates on the weighted $L^p$ solution of BSDEs.

\subsection{Notations}

Denote the usual Euclidean norm by $|\cdot|$ and the usual inner product of $x,y\in \R^k$ by $\langle x,y\rangle$. For a matrix $A$, we define $|A|:=\sqrt{tr(AA*)}$, where $A*$ represents the transpose of $A$. Let $\R_-:=(-\infty,0]$, $\R_+:=[0,+\infty)$ and $\hat{y}:=\frac{y}{|y|}{\bf 1}_{|y|\neq0}$ for each $y\in \R^k$, where ${\bf 1}_B$ represents the indicator of set $B$. Denote by $L^\infty(\Omega,\F_\tau, \mathbb{P};\R^k)$ the set of $\F_\tau$-measurable $\R^k$-valued random vectors such that
$$\|X\|_{\infty}:=\sup\{x: \mathbb{P}(|X|>x)>0\}<+\infty,$$
and by $S^\infty(\R^k)$ the set of $(\F_t)$-adapted, $\R^k$-valued, continuous and bounded processes. Every equality and inequality between random elements should be understood as holding $\mathbb{P}$-almost surely.

For an $(\F_t)$-progressively measurable process $\rho_t(\omega):\Omega \times [0,\tau]\mapsto \R$ satisfying $\int_{0}^{\tau}|\rho_t|{\rm d}t<+\infty$ and a constant $p>1$, we
denote the following weighted $L^p$ spaces.\vspace{0.2cm}

$\bullet$  $L_\tau^p(\rho_\cdot;\R^k)$ denotes the set of all $\xi$ that are $\F_\tau$-measurable $\R^k$-valued random vectors with
$$\|\xi\|_{p;\rho_\cdot}:=\left(\E\left[e^{p \int_0^\tau \rho_s{\rm d}s}|\xi|^p\right]\right)^{\frac{1}{p}}<+\infty.$$

$\bullet$  $S_\tau^p(\rho_\cdot;\R^k)$ denotes  the set of all $(Y_t)_{t\in[0,\tau]}$ that are $(\F_t)$-adapted, $\R^k$-valued, and continuous processes with
$$\|Y_\cdot\|_{p; \rho_\cdot,c}:=\left(\E\left[\sup_{t\in[0,\tau]}\left(e^{p \int_0^t \rho_r{\rm d}r}|Y_t|^p\right)\right]\right)^{\frac{1}{p}}<+\infty.$$

$\bullet$  $M_\tau^p(\rho_\cdot;\R^{k\times d})$ denotes the set of all $(Z_t)_{t\in[0,\tau]}$ that are  $(\F_t)$-progressively measurable $\R^{k\times d}$-valued processes with
$$\|Z_\cdot\|_{p; \rho_\cdot}:=\left(\E\left[{\left(\int_0^\tau e^{2 \int_0^t \rho_r{\rm d}r}|Z_t|^2{\rm d}t\right)}^\frac{p}{2}\right]\right)^{\frac{1}{p}}<+\infty.$$
Furthermore, define
$$H_\tau^p(\rho_\cdot;\R^{k}\times\R^{k\times d}):=S_\tau^p(\rho_\cdot;\R^k)\times M_\tau^p(\rho_\cdot;\R^{k\times d}).$$
It is clear that $H_\tau^p(\rho_\cdot;\R^{k}\times\R^{k\times d})$ is a Banach space with the norm
$$\|(Y_\cdot,Z_\cdot)\|_{p; \rho_\cdot}:=\|Y_\cdot\|_{p; \rho_\cdot,c}
+\|Z_\cdot\|_{p; \rho_\cdot}.$$
Finally, if a pair of processes $(y_t,z_t)_{t\in [0,\tau]}\in H_{\tau}^p(\rho_\cdot;\R^{k}\times\R^{k\times d})$ is an adapted solution of BSDE \eqref{BSDE1.1}, then it is called a weighted $L^p$ solution of BSDE \eqref{BSDE1.1} in the space of $H_\tau^p(\rho_\cdot;\R^{k}\times\R^{k\times d})$.

The following remark indicates what kind of random vector $\xi$ belongs to the weighted space of $L_\tau^p(\rho_\cdot;\R^k)$. Similar results can be obtained for the weighted spaces $S_\tau^p(\rho_\cdot;\R^k)$ and $M_\tau^p(\rho_\cdot;\R^{k\times d})$.

\begin{rmk}\label{rmk:2.1}
Let $p>1$ and $M>0$ be two constants. We have the following assertions.
\begin{itemize}
\item [(i)] If $\int_0^\tau |\rho_t|\dif t\leq M$, then $L_\tau^p(\rho_\cdot;\R^k)=L_\tau^p(0;\R^k)$; If $\int_0^\tau \rho_t^+\dif t\leq M$, then $L_\tau^p(0;\R^k)\subseteq L_\tau^p(\rho_\cdot;\R^k)$.

\item [(ii)] If $\E\big[e^{p\int_0^\tau \rho_t\dif t} \big]<+\infty$, then $L^\infty(\Omega,\F_\tau, \mathbb{P};\R^k)\subseteq L_\tau^p(\rho_\cdot;\R^k)$; If $\E\big[e^{\bar{p}\int_0^\tau \rho_t\dif t} \big]<+\infty$ for some $\bar{p}>p$, then by H\"{o}lder's inequality, $L_\tau^{\bar{q}}(0;\R^k)\subseteq L_\tau^p(\rho_\cdot;\R^k)$, where $\bar{q}$ is the constant such that $\frac{1}{\bar{p}}+\frac{1}{\bar{q}}=\frac{1}{p}$; If $\E\big[e^{\bar{p}\int_0^\tau \rho_t\dif t} \big]<+\infty$ for any $\bar{p}>1$, then $L_\tau^{q}(0;\R^k)\subseteq L_\tau^p(\rho_\cdot;\R^k)$ for each $q>p$.

\item [(iii)] If $|\xi|\leq Me^{-\int_0^\tau \rho_t\dif t}$, then $\xi\in L_\tau^p(\rho_\cdot;\R^k)$; Let $\rho_\cdot\in\R$, $b_\cdot\in\R$ and $\sigma_\cdot\in \R^{1\times d}$ be three $(\F_t)$-progressively measurable processes such that $\int_0^\tau (|\rho_t|+|b_t|+|\sigma_t|^2)\dif t<+\infty$, and let $(X_t)_{t\in [0,\tau]}$ solve the following SDE
    $$
    {\rm d}X_t= X_t\left(b_t{\rm d}t+ \sigma_t{\rm d}B_t\right),\ \ t\in [0,\tau];\ \ \ \ X_0=x_0\in\R.
    $$
    If
    $$
    \int_0^\tau \left(\rho_t+b_t+\frac{p-1}{2}|\sigma_t|^2\right){\rm d}t\leq M,
    $$
    then
    $$X_\tau=x_0e^{\int_0^\tau (b_t-\frac{1}{2}|\sigma_t|^2)\dif t+\int_0^\tau \sigma_t \dif B_t}\in L_\tau^p(\rho_\cdot;\R^k).
    $$
    In fact, we have
\begin{align*}
\E\left[e^{p\int_0^\tau \rho_t{\rm d}t} |X_\tau|^p \right]=|x_0|^p~\E\left[e^{p\int_0^\tau \sigma_t{\rm d}B_t-\frac{p^2}{2}\int_0^\tau |\sigma_t|^2{\rm d}t}~e^{p\int_0^{\color{red}\tau} \left(\rho_t+b_t+\frac{p-1}{2}|\sigma_t|^2\right){\rm d}t}\right]<+\infty.
\end{align*}
\end{itemize}
\end{rmk}

\subsection{An inspiration example}

In this subsection, we assume that $k=d=1$, $p>1$ and $T>0$ are four finite constants, and consider solvability of the following linear BSDE:
\begin{align}\label{linearBSDE}
  y_t=\xi+\int_t^T (\mu_sy_s+\nu_sz_s){\rm d}s-\int_t^T z_s{\rm d}B_s, \ \ t\in[0,T],
\end{align}
where $\mu_t(\omega)$: $\Omega \times [0,T]\mapsto \R$ and $\nu_t(\omega)$: $\Omega \times [0,T]\mapsto \R_+$ are two $(\F_t)$-progressively measurable processes satisfying $\int_{0}^{T}\left(|\mu_t|+\nu_t^2\right){\rm d}t<+\infty$. This kind of linear BSDEs has been investigated in \citet{Li2025}. However, the stochastic coefficient $\mu_\cdot$ therein is forced to be a nonnegative process. It is easily seen that when $\mu_\cdot, \nu_\cdot$ and $\xi$ are bounded, BSDE \eqref{linearBSDE} admits a unique bounded solution such that
\begin{align}\label{yt}
y_t=\E\left[\xi e^{\int_t^T\mu_r{\rm d}r+\int_t^T \nu_r{\rm d}B_r-\frac{1}{2}\int_t^T\nu_r^2{\rm d}r}\bigg|\F_t\right], \ \ \ t\in[0,T].
\end{align}
To ensure that the process $y_\cdot$ in \eqref{yt} is well defined, it is necessary and sufficient that
$$
\E\left[\xi e^{\int_0^T\mu_r{\rm d}r+\int_0^T \nu_r{\rm d}B_r-\frac{1}{2}\int_0^T\nu_r^2{\rm d}r}\right]<+\infty.
$$
We want to know what happens when $\mu_\cdot, \nu_\cdot$ and $\xi$ are unbounded, and want to avoid the stochastic integral term to appear in the above condition. By H\"{o}lder's inequality we have
\begin{align*}
\begin{split}
&\E\left[\xi e^{\int_0^T\mu_r{\rm d}r+\int_0^T \nu_r{\rm d}B_r-\frac{1}{2}\int_0^T\nu_r^2{\rm d}r}\right]\\
&=\E\left[\xi e^{\int_0^T\mu_r{\rm d}r+\frac{1}{2(p-1)}\int_0^T\nu_r^2{\rm d}r} e^{\int_0^T \nu_r{\rm d}B_r-(\frac{1}{2}+\frac{1}{2(p-1)})\int_0^T\nu_r^2{\rm d}r}\right]\\
&\leq\left(\E\left[|\xi|^p e^{p(\int_0^T\mu_r{\rm d}r+\frac{1}{2(p-1)}\int_0^T\nu_r^2{\rm d}r)}\right]\right)^{\frac{1}{p}}
\left(\E\left[e^{\frac{p}{p-1}\int_0^T \nu_r{\rm d}B_r-\frac{p^2}{2(p-1)^2}\int_0^T\nu_r^2{\rm d}r}\right]\right)^{\frac{p-1}{p}}\\
&\leq \left(\E\left[|\xi|^p e^{p(\int_0^T\mu_r{\rm d}r+\frac{1}{2(p-1)}\int_0^T\nu_r^2{\rm d}r)}\right]\right)^{\frac{1}{p}}.
\end{split}
\end{align*}
Consequently, the process $y_\cdot$ in \eqref{yt} can be well defined as long as there exists an $(\F_t)$-progressively measurable real-valued process $\rho_\cdot$ satisfying $\int_0^T |\rho_t| \dif t<+\infty$ such that
\begin{align}\label{pxi}
\begin{split}
\rho_\cdot\geq \mu_\cdot+\frac{1}{2(p-1)}\nu_\cdot^2 \ \ \text{and} \ \ \E\left[|\xi|^p e^{p\int_0^T\rho_r{\rm d}r}\right]<+\infty.
\end{split}
\end{align}
Next, we verify that if \eqref{pxi} holds, then the process $y_\cdot$ in \eqref{yt} satisfies
\begin{align}\label{pyt}
\begin{split}
\sup\limits_{t\in[0,T]}\E\left[|y_t|^pe^{p\int_0^t\rho_r{\rm d}r}\right]<+\infty.
\end{split}
\end{align}
Indeed, by virtue of \eqref{pxi} and H\"{o}lder's inequality we can deduce that
\begin{align}\label{yt*}
&y_te^{\int_0^t\rho_r{\rm d}r} =\E\left[\xi e^{\int_t^T\mu_r{\rm d}r+\int_t^T\nu_r{\rm d}B_r-\frac{1}{2}\int_t^T\nu_r^2{\rm d}r+\int_0^t\rho_r{\rm d}r}\bigg|\F_t\right]\nonumber\\
&\leq\E\left[\xi e^{\int_t^T(\rho_r-\frac{1}{2(p-1)}\nu_r^2){\rm d}r+\int_t^T\nu_r{\rm d}B_r-\frac{1}{2}\int_t^T\nu_r^2{\rm d}r+\int_0^t\rho_r{\rm d}r}\bigg|\F_t\right]\\
&\leq \left(\E\left[|\xi|^p e^{p\int_0^T\rho_r{\rm d}r}\bigg|\F_t\right]\right)^{\frac{1}{p}}
\left(\E\left[e^{\frac{p}{p-1}\int_t^T\nu_r{\rm d}B_r-\frac{p^2}{2(p-1)^2}\int_t^T\nu_r^2{\rm d}r}\bigg|\F_t\right]\right)^{\frac{p-1}{p}}\nonumber\\
&\leq \left(\E\left[|\xi|^p e^{p\int_0^T\rho_r{\rm d}r}\bigg|\F_t\right]\right)^{\frac{1}{p}},\ \ t\in [0,T], \nonumber
\end{align}
and then
$$
\sup\limits_{t\in[0,T]}\E\left[|y_t|^pe^{p\int_0^t\rho_r{\rm d}r}\right]\leq\sup\limits_{t\in[0,T]}\E\left[|\xi|^p e^{p\int_0^T\rho_r{\rm d}r}\right]<+\infty.\vspace{0.1cm}
$$

It is noteworthy that when \eqref{pxi} holds, the process $y_\cdot$ does not necessarily satisfy
\begin{align}\label{supyt}
\E\left[\sup\limits_{t\in[0,T]}\left(|y_t|^pe^{p\int_0^t\rho_r{\rm d}r}\right)\right]<+\infty.
\end{align}
Now, we give an example as follows. Let $\mu_\cdot=0$, $\nu_\cdot=b_\cdot$ and
$$
\xi:=e^{\frac{1}{p-1}\int_0^T b_r{\rm d}B_r-\frac{2p-1}{2(p-1)^2}\int_0^Tb_r^2{\rm d}r},\vspace{-0.2cm}
$$
where $b_\cdot$ is a given $(\F_t)$-progressively measurable non-negative process such that
$$
\left\{\bar y_t:= e^{\frac{p}{p-1}\int_0^tb_r{\rm d}B_r-\frac{p^2}{2(p-1)^2}\int_0^tb_r^2{\rm d}r}\right\}_{t\in[0,T]}
$$
is uniformly integrable, but $\E\big[\sup\limits_{t\in[0,T]}|\bar y_t|\big]=+\infty.$ Then, \eqref{pxi} holds with $\rho_\cdot= \frac{b_\cdot^2}{2(p-1)}$ since
\begin{align*}
\E\left[|\xi|^p e^{p\int_0^T \rho_r{\rm d}r}\right]=\E\left[e^{\frac{p}{p-1}\int_0^Tb_r{\rm d}B_r-\frac{p^2}{2(p-1)^2}\int_0^Tb_r^2{\rm d}r}\right]<+\infty,
\end{align*}
but the process $y_\cdot$ in \eqref{yt} does not satisfies \eqref{supyt} with $\rho_\cdot= \frac{b_\cdot^2}{2(p-1)}$ since
\begin{align*}
\begin{split}
y_t&=\E\left[\left.e^{\frac{1}{p-1}\int_0^Tb_r{\rm d}B_r-\frac{2p-1}{2(p-1)^2}\int_0^Tb_r^2{\rm d}r}
e^{\int_t^T b_r{\rm d}B_r-\frac{1}{2}\int_t^T b_r^2{\rm d}r}\right|\F_t \right]\\
&=\E\left[\left. e^{\frac{p}{p-1}\int_0^Tb_r{\rm d}B_r-\frac{p^2}{2(p-1)^2}\int_0^Tb_r^2{\rm d}r}\right|\F_t \right]~e^{-\int_0^t b_r{\rm d}B_r+\frac{1}{2}\int_0^t b_r^2{\rm d}r}\\
&=e^{\frac{1}{p-1}\int_0^t b_r{\rm d}B_r-\frac{2p-1}{2(p-1)^2}\int_0^t b_r^2{\rm d}r},\ \ \ t\in \T,
\end{split}
\end{align*}
and then
$$
\E\left[\sup\limits_{t\in[0,T]}\left(|y_t|^p e^{p\int_0^t\rho_r{\rm d}r}\right)\right]=\E\left[\sup\limits_{t\in[0,T]}e^{\frac{p}{p-1}
\int_0^tb_r{\rm d}B_r-\frac{p^2}{2(p-1)^2}\int_0^tb_r^2{\rm d}r}\right]=\E\left[\sup\limits_{t\in[0,T]}|\bar y_t|\right]=+\infty.\vspace{0.1cm}
$$

However, it can be proved that if \eqref{pxi} holds with $\rho_\cdot\geq \mu_\cdot+\frac{{\theta}}{2(p-1)}\nu_\cdot^2$ for some constant ${\theta}>1$, then the process $y_\cdot$ in \eqref{yt} must satisfy \eqref{supyt}. Indeed, set $\tilde{p}:=1+\frac{p-1}{{\theta}}\in (1,p)$ and $\tilde{q}:=1+\frac{{\theta}}{p-1}$ satisfying $1/\tilde{p}+1/\tilde{q}=1$. By virtue of \eqref{yt*} and H\"{o}lder's inequality, we obtain
\begin{align*}
&y_te^{\int_0^t\rho_r{\rm d}r}\leq\E\left[\xi e^{\int_0^T\rho_r{\rm d}r} e^{\int_t^T\nu_r{\rm d}B_r-(\frac{1}{2}+\frac{{\theta}}{2(p-1)})\int_t^T\nu_r^2{\rm d}r}\bigg|\F_t\right]\\
&\ \ \leq\left(\E\left[|\xi|^{\tilde{p}} e^{\tilde{p}\int_0^T\rho_r{\rm d}r}\bigg|\F_t\right]\right)^{\frac{1}{\tilde{p}}}
\left(\E\left[e^{\tilde{q}\int_t^T\nu_r{\rm d}B_r-\frac{\tilde{q}^2}{2}\int_t^T\nu_r^2{\rm d}r}\bigg|\F_t\right]\right)^{\frac{1}{\tilde{q}}}\\
&\ \ \leq \left(\E\left[|\xi|^{\tilde{p}} e^{\tilde{p}\int_0^T\rho_r{\rm d}r}\bigg|\F_t\right]\right)^{\frac{1}{\tilde{p}}}, \ \ t\in[0,T].
\end{align*}
In light of $p/\tilde{p}>1$, combining the last inequality and Doob's maximal inequality yields
\begin{align*}
\begin{split}
\E\left[\sup\limits_{t\in[0,T]}\left(|y_t|^pe^{p\int_0^t\rho_r{\rm d}r}\right)\right] &\leq \E\left[\sup\limits_{t\in[0,T]}\left(\E\left[|\xi|^{\tilde{p}} e^{\tilde{p}\int_0^T\rho_r{\rm d}r}\bigg|\F_t\right]\right)^{\frac{p}{\tilde{p}}}\right]\\
&\leq \E\left[|\xi|^p e^{p\int_0^T\rho_r{\rm d}r}\right].
\end{split}
\end{align*}
Hence, \eqref{supyt} comes true when \eqref{pxi} holds with $\rho_\cdot\geq \mu_\cdot+\frac{{\theta}}{2(p-1)}\nu_\cdot^2$ for some constant $\theta> 1$. \vspace{0.2cm}

To be conclusion, we have verified that if there exists an $(\F_t)$-progressively measurable real-valued process $(\rho_t)_{t\in\T}$ satisfying $\int_0^T |\rho_t| \dif t<+\infty$ such that $\xi\in L_T^p(\rho_\cdot;\R)$ and $\rho_\cdot\geq \mu_\cdot+\frac{{\theta}}{2(p-1)}\nu_\cdot^2$ for some ${\theta}>1$, then the linear BSDE \eqref{linearBSDE} admits an adapted solution $(y_t,z_t)_{t \in[0,T]}$ such that the value process $y_\cdot$ belongs to the weighted space $S_T^p(\rho_\cdot;\R)$. It should be especially emphasised that for the case of $p>2$, the process $z_\cdot$ does not necessarily belong to the weighted space $M_T^p(\rho_\cdot;\R)$ under the above-mentioned conditions. However, if $\rho_\cdot\geq \mu_\cdot+\frac{{\theta}}{2[1\wedge(p-1)]}\nu_\cdot^2$ for some ${\theta}>1$, then by It\^{o}'s formula it can be proved that $z_\cdot$ belongs to $M_T^p(\rho_\cdot;\R),$ and then $(y_\cdot,z_\cdot)$ belongs to $H_T^p(\rho_\cdot;\R^2)$. See the subsequent \cref{pro2.01} for details. In particular, when $\mu_\cdot$ is a nonnegative process, if we take $\rho_\cdot= \beta\mu_\cdot+\frac{{\theta}}{2[1\wedge(p-1)]}\nu_\cdot^2$ for some $\beta\geq 1$ and ${\theta}> 1$, then it comes to the case studied in \citet{LiFan2024SD}.\vspace{0.2cm}

The above argument further inspires us to study the existence and uniqueness of an adapted solution for a general nonlinear BSDE \eqref{BSDE1.1} with possibly unbounded stochastic coefficients $\mu_\cdot$ and $\nu_\cdot$ in a weighted $L^p~(p>1)$ space featuring a more general weighted factor $e^{\int_0^t\rho_r{\rm d}r}$ than one employed in \citet{LiFan2024SD}. More specifically, the process $\rho_\cdot$ is not necessarily nonnegative and it only needs to satisfy $\int_0^\tau |\rho_t| \dif t<+\infty$ and $\rho_\cdot\geq \mu_\cdot+\frac{\theta}{2[1\wedge(p-1)]}\nu_\cdot^2$ for some constant $\theta>1$.\vspace{0.2cm}

\subsection{A priori estimates}

In this subsection, we will establish several a priori estimates on the weighted $L^p$ solution to BSDEs with possibly unbounded stochastic coefficients. These estimates generalize some corresponding results obtained in existing literature, such as Propositions 2.3 and 2.4 in \citet{LiFan2024SD}, Proposition 2.2 in \citet{Li2025}, Proposition 2.1 in \citet{Xiao2015}, and Proposition 3.2 in \citet{Briand2003}, and play an important role in the proof of the main results of this paper.

To illustrate them, it is necessary to introduce the following assumption on the generator $g$.

\begin{enumerate}
\renewcommand{\theenumi}{(A)}
\renewcommand{\labelenumi}{\theenumi}
\item\label{A:A} There exist three $(\F_t)$-progressively measurable processes $\bar\rho_t(\omega):\Omega \times [0,\tau]\mapsto \R$, $\bar\mu_t(\omega):\Omega \times [0,\tau]\mapsto \R$ and $\bar\nu_t(\omega):\Omega \times [0,\tau]\mapsto \R_+$ satisfying $\int_{0}^{\tau}(|\bar\rho_t|+|\bar\mu_t|+\bar\nu_t^2){\rm d}t<+\infty$ and
    $$
    \bar{\rho}_\cdot\geq \bar{\mu}_\cdot+ \frac{\theta}{2[1\wedge(\bar{p}-1)]}{\bar\nu_\cdot}^2
    $$
   for two constants $\bar{p}>1$ and $\theta>1$ such that for each $(y,z)\in \R^k\times\R^{k\times{d}}$, we have
    $$
    \left<\hat{y},~g(\omega,t,y,z)\right>\leq f_{t}(\omega)+\bar\mu_t(\omega)|y|+\bar\nu_t(\omega)|z|, \ \ t\in [0,\tau(\omega)],
    $$
    where $(f_t)_{t\in [0,\tau]}$ is an $(\F_t)$-progressively measurable nonnegative real-valued process with $$\E\left[\left(\int_{0}^{\tau}e^{ \int_{0}^{t}\bar{\rho}_r{\rm d}r}f_t{\rm d}t\right)^{\bar{p}}\right]<+\infty.$$
\end{enumerate}
\begin{pro}\label{pro2.01}
Assume that the generator $g$ satisfies assumption (A) and $(y_t,z_t)_{t\in [0,\tau]}$ is an adapted solution of BSDE \eqref{BSDE1.1} such that $y_\cdot\in S_\tau^{\bar{p}}(\bar{\rho}_\cdot;\R^k)$. Then $z_\cdot\in M_\tau^{\bar{p}}(\bar{\rho}_\cdot;\R^{k\times d})$, and there exists a constant $C_{\bar{p},\theta}^1>0$ depending only on $\bar{p}$ and $\theta$ such that for each $0\leq r\leq t< +\infty$,
$$
\E\left[\left(\int_{t\wedge\tau}^{\tau}e^{2\int_{0}^{s}\bar{\rho}_{r}{\rm d}r}|z_s|^2{\rm d}s\right)^{\frac{\bar{p}}{2}}\bigg|\F_{r\wedge\tau}\right]
\leq
C_{p,\theta}^1\E\left[\sup_{s\in[t\wedge\tau,\tau]}\left(e^{\bar{p} \int_{0}^{s}\bar{\rho}_r{\rm d}r}|y_s|^{\bar{p}}\right) +\left(\int_{t\wedge\tau}^{\tau}e^{ \int_{0}^{s}\bar{\rho}_r{\rm d}r}f_s{\rm d}s\right)^{\bar{p}}\bigg|\F_{r\wedge\tau}\right].\vspace{0.2cm}
$$
\end{pro}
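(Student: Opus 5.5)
Set $\tilde y_s:=e^{\int_0^s\bar\rho_r{\rm d}r}y_s$ and $\tilde z_s:=e^{\int_0^s\bar\rho_r{\rm d}r}z_s$. This reduces the weighted estimate to an unweighted one for the pair $(\tilde y,\tilde z)$, which solves a BSDE whose generator satisfies a one-sided growth condition with nonpositive linear-in-$y$ coefficient. Indeed, by It\^o's formula,
$${\rm d}\tilde y_s=\big(\bar\rho_s\tilde y_s-e^{\int_0^s\bar\rho_r{\rm d}r}g(s,y_s,z_s)\big){\rm d}s+\tilde z_s{\rm d}B_s,$$
and (A) gives
$$\langle \tilde y_s,\,e^{\int_0^s\bar\rho_r{\rm d}r}g(s,y_s,z_s)\rangle-\bar\rho_s|\tilde y_s|^2\le |\tilde y_s|\tilde f_s+(\bar\mu_s-\bar\rho_s)|\tilde y_s|^2+\bar\nu_s|\tilde y_s||\tilde z_s|,$$
with $\tilde f_s:=e^{\int_0^s\bar\rho_r{\rm d}r}f_s$. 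Since $\bar\mu_s-\bar\rho_s\le-\frac{\theta}{2[1\wedge(\bar p-1)]}\bar\nu_s^2$, Young's inequality gives
$$\bar\nu_s|\tilde y_s||\tilde z_s|\le \frac{\theta}{2[1\wedge(\bar p-1)]}\bar\nu_s^2|\tilde y_s|^2+\frac{1\wedge(\bar p-1)}{2\theta}|\tilde z_s|^2 .$$
The $\bar\nu^2$ terms therefore cancel, leaving only a fraction $\frac{1}{2\theta}\cdot[1\wedge(\bar p-1)]<\frac12[1\wedge(\bar p-1)]$ of $|\tilde z|^2$ to absorb.

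For the conditional estimate I apply It\^o's formula to $|\tilde y_s|^2$ on $[t\wedge\tau,\sigma_n]$, where $\sigma_n$ is a localizing stopping time. The bound above gives
$$|\tilde y_{t\wedge\tau}|^2+\Big(1-\tfrac{1}{\theta}\Big)\int_{t\wedge\tau}^{\sigma_n}|\tilde z_s|^2{\rm d}s\le |\tilde y_{\sigma_n}|^2+2\int_{t\wedge\tau}^{\sigma_n}|\tilde y_s|\tilde f_s{\rm d}s-2\int_{t\wedge\tau}^{\sigma_n}\langle\tilde y_s,\tilde z_s{\rm d}B_s\rangle ,$$
where $1\wedge(\bar p-1)\le1$ is used to absorb the $z$ term. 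Let $Y^*:=\sup_{s\in[t\wedge\tau,\tau]}|\tilde y_s|$. Raising to the power $\bar p/2$ and bounding $2Y^*\int\tilde f$ by $Y^{*2}+(\int\tilde f)^2$ gives
$$\Big(\int_{t\wedge\tau}^{\sigma_n}|\tilde z_s|^2{\rm d}s\Big)^{\bar p/2}\le C\Big(Y^{*\bar p}+\Big(\int\tilde f\Big)^{\bar p}+\Big|\int_{t\wedge\tau}^{\sigma_n}\langle\tilde y_s,\tilde z_s{\rm d}B_s\rangle\Big|^{\bar p/2}\Big).$$
I then take $\E[\cdot|\F_{r\wedge\tau}]$ and control the martingale term with the conditional BDG inequality:
$$\E\Big[\Big|\int\langle\tilde y,\tilde z{\rm d}B\rangle\Big|^{\bar p/2}\Big|\F_{r\wedge\tau}\Big]\le C\,\E\Big[Y^{*\bar p/2}\Big(\int|\tilde z|^2\Big)^{\bar p/4}\Big|\F_{r\wedge\tau}\Big]\le \frac{C^2}{2\epsilon}\E[Y^{*\bar p}|\F_{r\wedge\tau}]+\frac{\epsilon}{2}\,\E\Big[\Big(\int|\tilde z|^2\Big)^{\bar p/2}\Big|\F_{r\wedge\tau}\Big].$$
Choosing $\epsilon$ small absorbs the $z$ term into the left-hand side.

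\textbf{Localization.} The absorption requires the left-hand side to be finite, so I choose
$$\sigma_n:=\inf\Big\{s\ge t\wedge\tau:\int_{t\wedge\tau}^s|\tilde z_u|^2{\rm d}u\ge n\Big\}\wedge\tau .$$
This is the only place finiteness of $\int_0^\tau|\tilde z|^2$ enters; it holds because $\int_0^\tau|z|^2<\infty$ and $\int_0^\tau|\bar\rho|<\infty$. It also makes the localized quantity bounded by $n^{\bar p/2}$. Letting $n\to\infty$ via Fatou/monotone convergence, and using $|\tilde y_{\sigma_n}|\le Y^*$, yields the claimed inequality with a constant depending only on $\bar p$ and $\theta$. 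Taking $r=t=0$ and using $y\in S^{\bar p}_\tau(\bar\rho_\cdot)$ together with the integrability of $f$ gives $z\in M^{\bar p}_\tau(\bar\rho_\cdot)$.

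\textbf{Main obstacle.} The delicate point is the cancellation in the drift. Since $\bar\rho$ may be negative and unbounded, and $\bar\mu$ and $\bar\nu$ are unbounded, no Gronwall-type argument is available. The estimate must therefore come purely from the structural inequality $\bar\rho\ge\bar\mu+\frac{\theta}{2[1\wedge(\bar p-1)]}\bar\nu^2$, with $\theta>1$ leaving the margin $1-\frac1\theta$ of $|\tilde z|^2$. For $\bar p\ge2$ this margin is ample. For $1<\bar p<2$ the same $|y|^2$ computation still works, since only $\bar p/2$ powers are taken afterwards, and the stronger factor $\frac{1}{\bar p-1}$ is harmless because it only enlarges the cancelled term. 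Care is needed only to keep every constant free of $\bar\mu$, $\bar\nu$ and $\bar\rho$.
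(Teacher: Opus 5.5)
Your proposal is correct and follows the paper's proof essentially step by step: the exponential weighting, It\^o's formula for $|\tilde y|^2$ on a localized interval, the same Young split that cancels the $\bar\nu^2$ term against $\bar\rho-\bar\mu$, raising to the power $\bar p/2$, and the conditional BDG bound with absorption under localization by $\int|\tilde z|^2$. The one minor difference is that you localize from $t\wedge\tau$, condition directly on $\F_{r\wedge\tau}$, and use $n$-independent majorants, so monotone convergence suffices to pass to the limit; the paper instead conditions on $\F_{r\wedge\tau_m}$, uses Fatou and dominated convergence in $n$, and then a martingale convergence step in $m$.
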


\begin{proof}[{\bf Proof.}]
Define $\bar{y}_t=e^{\int_{0}^{t}\bar{\rho}_r{\rm d}r}y_t$, $\bar{z}_t=e^{\int_{0}^{t}\bar{\rho}_r{\rm d}r}z_t$ and $\bar{f}_t=e^{\int_{0}^{t}\bar{\rho}_r{\rm d}r}f_t$ for each $t\in [0,\tau]$. Then $(\bar{y}_t)_{t\in[0,\tau]}\in S_\tau^{\bar{p}}(0;\R^k)$, and it suffices to prove that there exists a constant $C_{\bar{p},\theta}^1>0$ depending only on $\bar{p}$ and $\theta$ such that for each $0\leq r\leq t< +\infty$, we have
\begin{align}\label{2.01}
\E\left[\left(\int_{t\wedge\tau}^{\tau}|\bar{z}_s|^2{\rm d}s\right)^{\frac{\bar{p}}{2}}\bigg|\F_{r\wedge\tau}\right]
\leq
C_{\bar{p},\theta}^1\E\left[\sup_{s\in[t\wedge\tau,\tau]}|\bar{y}_s|^{\bar{p}} +\left(\int_{t\wedge\tau}^{\tau}\bar{f}_s{\rm d}s\right)^{\bar{p}}\bigg|\F_{r\wedge\tau}\right].
\end{align}
For each integer $n\geq1$, define the following $(\F_t)$-stopping time
$$\tau_{n}:=\inf \left\{t \geq0: \int_{0}^{t}|\bar{z}_s|^{2} {\rm d}s \geq n\right\}\wedge \tau,$$
with convention that $\inf \emptyset=+\infty$. Applying It\^{o}'s formula to $|\bar{y}_t|^2$ yields that for each $t\geq0$ and $n\geq1$,
\begin{align}\label{2.1}
\begin{split}
&|\bar{y}_{t\wedge\tau_n}|^2+\int_{t\wedge\tau_n}^{\tau_n} |\bar{z}_s|^2{\rm d}s+2 \int_{t\wedge\tau_n}^{\tau_n} \bar{\rho}_s|\bar{y}_s|^2{\rm d}s\\
&\ \ =|\bar{y}_{\tau_n}|^2+2\int_{t\wedge\tau_n}^{\tau_n} e^{ \int_{0}^{s}\bar{\rho}_r{\rm d}r}\langle \bar{y}_s,~g(s,e^{-\int_{0}^{s}\bar{\rho}_r{\rm d}r}\bar{y}_s,e^{-\int_{0}^{s}\bar{\rho}_r{\rm d}r}\bar{z}_s)\rangle{\rm d}s-2\int_{t\wedge\tau_n}^{\tau_n} \langle \bar{y}_s,\bar{z}_s{\rm d}B_s\rangle.
\end{split}
\end{align}
In light of assumption \ref{A:A} and inequality $2ab\leq \frac{\theta}{1\wedge(\bar{p}-1)} a^2+\frac{1\wedge(\bar{p}-1)}{\theta}b^2$, we have
\begin{align}\label{2.2}
\begin{split}
&2e^{\int_{0}^{t}\bar{\rho}_r{\rm d}r}\langle \bar{y}_t, ~g(t, e^{-\int_{0}^{t}\bar{\rho}_r{\rm d}r}\bar{y}_t, e^{-\int_{0}^{t}\bar{\rho}_r{\rm d}r}\bar{z}_t)\rangle
\leq 2\bar\mu_{t}|\bar{y}_t|^2+2\bar\nu_t|\bar{y}_t||\bar{z}_t|
+2|\bar{y}_t|\bar{f}_t\\
&\ \ \leq 2\bar\mu_{t}|\bar{y}_t|^2+\frac{\theta}{1\wedge(\bar{p}-1)} \bar\nu_{t}^2|\bar{y}_t|^2+\frac{1\wedge(\bar{p}-1)}{\theta}
|\bar{z}_t|^2+2|\bar{y}_t|\bar{f}_t, \ \ t\geq0.
\end{split}
\end{align}
Combining \eqref{2.1} and \eqref{2.2} yields that for each $t\geq0$ and $n\geq1$,
\begin{align}\label{2.3}
\begin{split}
&|\bar{y}_{t\wedge\tau_n}|^2+\left(1-\frac{1\wedge(\bar{p}-1)}{\theta}\right) \int_{t\wedge\tau_n}^{\tau_n} |\bar{z}_s|^2{\rm d}s+ \int_{t\wedge\tau_n}^{\tau_n} (2\bar{\rho}_s-2\bar{\mu}_s-\frac{\theta}{1\wedge(\bar{p}-1)}{\bar\nu_s}^2)
|\bar{y}_s|^2{\rm d}s\\
&\ \ \leq|\bar{y}_{\tau_n}|^2+2\int_{t\wedge\tau_n}^{\tau_n} |\bar{y}_s|\bar{f}_s{\rm d}s-2\int_{t\wedge\tau_n}^{\tau_n}\langle \bar{y}_s,\bar{z}_s{\rm d}B_s\rangle.
\end{split}
\end{align}
Furthermore, it follows from the inequality $2ab\leq a^2+b^2$ that for each $t\geq0$ and $n\geq1$,
\begin{align*}
2\int_{t\wedge\tau_n}^{\tau_n} |\bar{y}_s|\bar{f}_s{\rm d}s\leq \sup_{s\in[t\wedge\tau_n,\tau_n]}|\bar{y}_s|^2
+\left(\int_{t\wedge\tau_n}^{\tau_n}\bar{f}_s{\rm d}s\right)^2.
\end{align*}
Putting the last inequality into \eqref{2.3} and in light of the condition that $2\bar{\rho}_t-2\bar{\mu}_t-\frac{\theta}{1\wedge(\bar{p}-1)}
{\bar{\nu}_t}^2\geq0$, we can conclude that there exists a constant $C_{\bar{p}}^\theta$ depending only on $\bar{p}$ and $\theta$ such that for each $n\geq1$,
\begin{align}\label{2.4}
\left(\int_{t\wedge\tau_n}^{\tau_n}|\bar{z}_s|^2{\rm d}s\right)^{\frac{\bar{p}}{2}}
\leq C_{\bar{p}}^\theta \left[\sup_{s\in[t\wedge\tau_n,\tau_n]}|\bar{y}_s|^{\bar{p}}
+\left(\int_{t\wedge\tau_n}^{\tau_n}\bar{f}_s{\rm d}s\right)^{\bar{p}}+\left|\int_{t\wedge \tau_n}^{\tau_n} \langle \bar{y}_s,\bar{z}_s{\rm d}B_s\rangle\right|^{\frac{\bar{p}}{2}}\right], \ t\geq0.
\end{align}
Moreover, by the Burkholder--Davis--Gundy (BDG for short) inequality (see Proposition 2.5 in \citet{HuLiWen2025}) we know that for each $n\geq m\geq1$ and $0\leq r\leq t< +\infty$,
\begin{align}\label{2.501}
\begin{split}
&C_{\bar{p}}^\theta\E\left[\left|\int_{t\wedge \tau_n}^{\tau_n} \langle \bar{y}_s,\bar{z}_s{\rm d}B_s\rangle\right|^{\frac{\bar{p}}{2}}\bigg|\F_{r\wedge\tau_m}\right]
\leq d_{\bar{p},\theta}\E\left[\left(\int_{t\wedge\tau_n}^{\tau_n} |\bar{y}_s|^2|\bar{z}_s|^2{\rm d}s\right)^{\frac{\bar{p}}{4}}\bigg|\F_{r\wedge\tau_m}\right]\\
&\ \ \leq \frac{d_{\bar{p},\theta}^2}{2}\E\left[\sup_{s\in[t\wedge\tau_n,\tau_n]}
|\bar{y}_s|^{\bar{p}}\bigg|\F_{r\wedge\tau_m}\right]
+\frac{1}{2}\E\left[\left(\int_{t\wedge\tau_n}^{\tau_n} |\bar{z}_s|^2{\rm d}s\right)^{\frac{\bar{p}}{2}}\bigg|\F_{r\wedge\tau_m}\right],
\end{split}
\end{align}
where $d_{\bar{p},\theta}>0$ is a constant depending only on $\bar{p}$ and $\theta$.
Taking the conditional expectation with respect to $\F_{r\wedge\tau_m}$ on both sides of \eqref{2.4} and sending $n\rightarrow \infty$, it can be derived from \eqref{2.501}, Fatou's lemma and Lebesgue's dominated convergence theorem that there exists a constant $C_{\bar{p},\theta}^1>0$ depending only on $\bar{p}$ and $\theta$ such that for each $0\leq r\leq t<+\infty$ and $m\geq1$,
\begin{align*}
\E\left[\left(\int_{t\wedge\tau}^{\tau}|\bar{z}_s|^2{\rm d}s\right)^{\frac{\bar{p}}{2}}\bigg|\F_{r\wedge\tau_m}\right]
\leq C_{\bar{p},\theta}^1 \E\left[\sup_{s\in[t\wedge\tau,\tau]}|\bar{y}_s|^{\bar{p}}
+\left(\int_{t\wedge\tau}^{\tau}\bar{f}_s{\rm d}s\right)^{\bar{p}}\bigg|\F_{r\wedge\tau_m}\right].
\end{align*}
Finally, the desired assertion \eqref{2.01} follows by sending $m\rightarrow\infty$ and using the martingale convergence theorem (see Corollary A.9 in Appendix C of \citet{Oksendal2005}) on both sides of the last inequality.
\end{proof}

\begin{pro}\label{pro2.02}
Let the assumptions of \cref{pro2.01} hold. Then there exists a constant $C_{\bar{p},\theta}^2 >0$ depending only on $\bar{p}$ and $\theta$ such that for each $0\leq r\leq t< +\infty$,
\begin{align*}
\begin{split}
&\E\left[\sup_{s\in[t\wedge\tau,\tau]}\left(e^{\bar{p}
\int_{0}^{s}\bar{\rho}_{r}{\rm d}r}|y_s|^{\bar{p}}\right)\bigg|\F_{r\wedge\tau}\right]
+\E\left[\int_{t\wedge\tau}^{\tau}\bar{p}e^{\bar{p} \int_{0}^{s}\overline{\rho}_r{\rm d}r}(\bar{\rho}_s-\bar{\mu}_s-\frac{\theta}
{2[1\wedge(\bar{p}-1)]}{\bar\nu_s}^2)|y_s|^{\bar{p}}{\rm d}s\bigg|\F_{r\wedge\tau}\right]\\
&\ \  \leq
C_{\bar{p},\theta}^2\E\left[e^{\bar{p}\int_{0}^{\tau}\bar{\rho}_{r}{\rm d}r}|\xi|^{\bar{p}}+\left(
\int_{t\wedge\tau}^{\tau}e^{ \int_{0}^{s}\bar{\rho}_r{\rm d}r}f_s{\rm d}s\right)^{\bar{p}}\bigg|\F_{r\wedge\tau}\right].
\end{split}
\end{align*}
\end{pro}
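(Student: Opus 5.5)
We work with the transformed processes $\bar y_t=e^{\int_0^t\bar\rho_r{\rm d}r}y_t$, $\bar z_t=e^{\int_0^t\bar\rho_r{\rm d}r}z_t$ and $\bar f_t=e^{\int_0^t\bar\rho_r{\rm d}r}f_t$ from the proof of \cref{pro2.01}. By that proposition, $\bar z_\cdot\in M^{\bar p}_\tau(0;\R^{k\times d})$. The plan is to apply the It\^o formula for $|\bar y_t|^{\bar p}$, in the form valid for all $\bar p>1$ (Briand et al., Lemma 2.2, with the regularisation near $0$ when $\bar p<2$). Between $t\wedge\tau_n$ and $\tau_n$, with the localising stopping times $\tau_n$ as in the proof of \cref{pro2.01}, this gives
\begin{align*}
|\bar y_{t\wedge\tau_n}|^{\bar p}&+\frac{\bar p(\bar p-1)}{2}\int_{t\wedge\tau_n}^{\tau_n}|\bar y_s|^{\bar p-2}{\bf 1}_{\bar y_s\neq0}|\bar z_s|^2{\rm d}s+\bar p\int_{t\wedge\tau_n}^{\tau_n}\bar\rho_s|\bar y_s|^{\bar p}{\rm d}s\\
&\leq|\bar y_{\tau_n}|^{\bar p}+\bar p\int_{t\wedge\tau_n}^{\tau_n}|\bar y_s|^{\bar p-1}\langle\hat{\bar y}_s,e^{\int_0^s\bar\rho_r{\rm d}r}g(s,y_s,z_s)\rangle{\rm d}s-\bar p\int_{t\wedge\tau_n}^{\tau_n}|\bar y_s|^{\bar p-2}\langle\bar y_s,\bar z_s{\rm d}B_s\rangle.
\end{align*}
(When $\bar p\ge2$ the $z$-term coefficient is actually larger, but we use $\frac{\bar p(\bar p-1)}{2}$ only through $1\wedge(\bar p-1)$.)

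Next we bound the generator term using assumption (A):
$$\bar p|\bar y_s|^{\bar p-1}\langle\hat{\bar y}_s,e^{\int_0^s\bar\rho_r{\rm d}r}g\rangle\le\bar p\bar\mu_s|\bar y_s|^{\bar p}+\bar p\bar\nu_s|\bar y_s|^{\bar p-1}|\bar z_s|+\bar p|\bar y_s|^{\bar p-1}\bar f_s.$$
For the middle term we use $\bar p\bar\nu|\bar y|^{\bar p-1}|\bar z|\le\frac{\bar p\theta}{2[1\wedge(\bar p-1)]}\bar\nu^2|\bar y|^{\bar p}+\frac{\bar p[1\wedge(\bar p-1)]}{2\theta}|\bar y|^{\bar p-2}|\bar z|^2$. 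Since $\theta>1$ and $1\wedge(\bar p-1)\le\bar p-1$, the $z$-part is absorbed into the left side with a positive remaining coefficient $\frac{\bar p(\bar p-1)}{2}(1-\theta^{-1})$. The $\bar y^{\bar p}$ part combines with $\bar\rho$ to give exactly the nonnegative integrand $\bar p(\bar\rho_s-\bar\mu_s-\frac{\theta}{2[1\wedge(\bar p-1)]}\bar\nu_s^2)|\bar y_s|^{\bar p}$ on the left, as appears in the statement. For the $f$-term we use
$$\bar p\int|\bar y|^{\bar p-1}\bar f\le\bar p\sup|\bar y|^{\bar p-1}\int\bar f\le\varepsilon\sup_{s\in[t\wedge\tau,\tau]}|\bar y_s|^{\bar p}+C_\varepsilon\Big(\int_{t\wedge\tau}^\tau\bar f_s{\rm d}s\Big)^{\bar p}.$$

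Then we take conditional expectations given $\F_{r\wedge\tau}$ (localising in $m$ as before). The stochastic integral is a true martingale because $y\in S^{\bar p}$, $\bar z\in M^{\bar p}$ and BDG give integrability, so its expectation vanishes. We let $n\to\infty$, using continuity, $y\in S^{\bar p}$ (dominated convergence, which gives $|\bar y_{\tau_n}|^{\bar p}\to e^{\bar p\int_0^\tau\bar\rho}|\xi|^{\bar p}$) and Fatou for the nonnegative integrals. This yields a bound on $\E[|\bar y_{t\wedge\tau}|^{\bar p}\mid\cdot]$, on the $\rho$-integral term, and on $\E[\int|\bar y|^{\bar p-2}|\bar z|^2\mid\cdot]$, all in terms of $\E[|\bar\xi|^{\bar p}+(\int\bar f)^{\bar p}\mid\cdot]$ plus $\varepsilon\E[\sup|\bar y|^{\bar p}\mid\cdot]$.

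The main obstacle is controlling the supremum. We return to the pathwise inequality, take $\sup$ over $[t\wedge\tau,\tau]$ and then conditional expectations, and apply BDG to the martingale term:
$$\E\Big[\sup\Big|\int|\bar y|^{\bar p-2}\langle\bar y,\bar z{\rm d}B\rangle\Big|\Big]\le c\,\E\Big[\Big(\int|\bar y|^{2\bar p-2}|\bar z|^2\Big)^{1/2}\Big]\le\frac12\E[\sup|\bar y|^{\bar p}]+c'\E\Big[\int|\bar y|^{\bar p-2}{\bf 1}_{\bar y\neq0}|\bar z|^2\Big].$$
The last expectation is already bounded by the previous step. Choosing $\varepsilon$ small and absorbing $\sup|\bar y|^{\bar p}$, which is finite and hence legitimate to absorb since $y\in S^{\bar p}_\tau$, closes the estimate, with constants depending only on $\bar p$ and $\theta$. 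The conditional version follows by the same $\tau_m$ localisation and martingale convergence argument used at the end of the proof of \cref{pro2.01}.
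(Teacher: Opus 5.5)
Your proposal follows essentially the same route as the paper: It\^o--Tanaka for $|\bar y|^{\bar p}$, Young's inequality to absorb the $\bar\nu$-term, and conditional expectations to bound $\E[\int_{t\wedge\tau}^{\tau}|\bar y_s|^{\bar p-2}{\bf 1}_{\bar y_s\neq0}|\bar z_s|^2{\rm d}s\,|\,\F_{r\wedge\tau}]$, where the martingale is uniformly integrable because $(\bar y,\bar z)\in H^{\bar p}_\tau(0;\R^k\times\R^{k\times d})$; the conditional BDG inequality then controls the supremum, and Young's inequality on the $\bar f$-term closes the estimate. One small correction: for $\bar p>2$ and $k\geq2$ the It\^o coefficient is only guaranteed to be $c(\bar p)=\frac{\bar p[1\wedge(\bar p-1)]}{2}=\frac{\bar p}{2}$, which is smaller than $\frac{\bar p(\bar p-1)}{2}$, so your parenthetical has the comparison backwards; since your Young split is calibrated to $1\wedge(\bar p-1)$, the absorption still works, with remaining coefficient $c(\bar p)(1-\theta^{-1})$ instead of $\frac{\bar p(\bar p-1)}{2}(1-\theta^{-1})$.
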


\begin{proof}[{\bf Proof.}]
As in the proof of the \cref{pro2.01}, we define $\bar{y}_t=e^{\int_{0}^{t}\bar{\rho}_r{\rm d}r}y_t$, $\bar{z}_t=e^{\int_{0}^{t}\bar{\rho}_r{\rm d}r}z_t$ and $\bar{f}_t=e^{\int_{0}^{t}\bar{\rho}_r{\rm d}r}f_t$ for each $t\in [0,\tau]$. Then $(\bar{y}_t,\bar{z}_t)_{t\in[0,\tau]} \in H_\tau^{\bar{p}}(0;R^k\times R^{k\times d})$,  and it suffices to prove that there exists a constant $C_{\bar{p},\theta}^2 >0$ depending only on $\bar{p}$ and $\theta$ such that for each $0\leq r\leq t< +\infty$,
\begin{align}\label{2.600}
\begin{split}
&\E\left[\sup_{s\in[t\wedge\tau,\tau]}|\bar{y}_s|^{\bar{p}}\bigg|
\F_{r\wedge\tau}\right]
+\E\left[\int_{t\wedge\tau}^{\tau}\bar{p}(\bar{\rho}_s-\bar{\mu}_s
-\frac{\theta}{2[1\wedge(\bar{p}-1)]}{\bar\nu_s}^2)|\bar{y}_s|^{\bar{p}}{\rm d}s\bigg|\F_{r\wedge\tau}\right]\\
&\ \ \leq
C_{\bar{p},\theta}^2\E\left[|\bar{\xi}|^{\bar{p}}+\left(
\int_{t\wedge\tau}^{\tau}\bar{f}_s{\rm d}s\right)^{\bar{p}}\bigg|\F_{r\wedge\tau}\right].
\end{split}
\end{align}
By applying It\^{o}-Tanaka's formula to $|\bar{y}_t|^{\bar{p}}$ and assumption \ref{A:A}, we obtain that for each $t\geq0$,
\begin{align}\label{2.60}
\begin{split}
&|\bar{y}_{t\wedge\tau}|^{\bar{p}}+c(\bar{p})\int_{t\wedge\tau}^{\tau} |\bar{y}_s|^{\bar{p}-2}{\bf 1}_{|\bar{y}_s|\neq0}|\bar{z}_s|^2{\rm d}s+\bar{p} \int_{t\wedge\tau}^{\tau} \bar{\rho}_s|\bar{y}_s|^{\bar{p}}{\rm d}s\\
& \ \ \ \ \ \leq |\bar{\xi}|^{\bar{p}}+\bar{p}\int_{t\wedge\tau}^{\tau} (\bar\mu_{s}|\bar{y}_s|^{\bar{p}}+\bar\nu_s
|\bar{y}_s|^{\bar{p}-1}|\bar{z}_s|+|\bar{y}_s|^{\bar{p}-1}\bar{f}_{s}){\rm d}s
-\bar{p}\int_{t\wedge\tau}^{\tau} |\bar{y}_s|^{\bar{p}-2}{\bf 1}_{|\bar{y}_s|\neq0}\langle \bar{y}_s,\bar{z}_s{\rm d}B_s\rangle,
\end{split}
\end{align}
where $c(\bar{p}):=\frac{\bar{p}[(\bar{p}-1)\wedge1]}{2}$.

It can be verified that $\{M_t=\int_{0}^{t} |\bar{y}_s|^{\bar{p}-2}{\bf 1}_{|\bar{y}_s|\neq0}\langle \bar{y}_s,\bar{z}_s{\rm d}B_s\rangle\}_{t\in[0,\tau]}$ is a uniformly integrable martingale. Indeed, by $(\bar{y}_t,\bar{z}_t)_{t\in[0,\tau]} \in H_\tau^{\bar{p}}(0;\R^k\times \R^{k\times d})$, the BDG
inequality (see Theorem 1 in \citet{Ren2008BDG}) and Young's inequality, we know that
\begin{align}\label{2.8}
\begin{split}
&\E\left[\sup_{t\in[0,\tau]}\bigg|\int_{0}^{t}|\bar{y}_s|^{\bar{p}-2}{\bf 1}_{|\bar{y}_s|\neq0}\langle \bar{y}_{s}, \bar{z}_{s} {\rm d} B_{s}\rangle\bigg|\right]
\leq 2\sqrt{2}\E\left[\left(\int_{0}^{\tau}|\bar{y}_s|^{2\bar{p}-2}{\bf 1}_{|\bar{y}_s|\neq0}|\bar{z}_{s}|^2 {\rm d}{s}\right)^{\frac{1}{2}}\right]\\
&\ \ \ \leq 2\sqrt{2}\E\left[\left(\sup_{s\in[0,\tau]}|\bar{y}_s|^{\bar{p}-1}\right)
\left(\int_{0}^{\tau}|\bar{z}_{s}|^2{\rm d}{s}\right)^{\frac{1}{2}}\right]\\
&\ \ \ \leq \frac{2\sqrt{2}(\bar{p}-1)}{\bar{p}}\E\left[\sup_{s\in[0,\tau]}
|\bar{y}_s|^{\bar{p}}\right]+\frac{2\sqrt{2}}{\bar{p}}
\E\left[\left(\int_{0}^{\tau}|\bar{z}_{s}|^2{\rm d}{s}\right)^{\frac{\bar{p}}{2}}\right]<+\infty.
\end{split}
\end{align}
Moreover, in light of assumption \ref{A:A}, by Young's inequality and H\"{o}lder's inequality we deduce that
\begin{align}\label{uyvz}
\begin{split}
&\int_0^{\tau}(|\bar{\mu}_s||\bar{y}_s|^{\bar{p}}+\bar\nu_s
|\bar{y}_s|^{\bar{p}-1}|\bar{z}_s|+|\bar{y}_s|^{\bar{p}-1}\bar{f}_{s}){\rm d}s\\
&\ \ \leq  \left(\sup\limits_{s\in [0,\tau]}|\bar{y}_s|^{\bar{p}}\right)\int_0^\tau |\bar{\mu}_s|{\rm d}s+\left(\sup\limits_{s\in [0,\tau]}|\bar{y}_s|^{\bar{p}-1}\right) \int_0^\tau \bar\nu_s |\bar{z}_s|{\rm d}s+ \left(\sup\limits_{s\in [0,\tau]}|\bar{y}_s|^{\bar{p}-1}\right) \int_0^\tau \bar{f}_{s}{\rm d}s\\
&\ \ \leq \left(\sup\limits_{s\in [0,\tau]}|\bar{y}_s|^{\bar{p}}\right)\int_0^\tau |\bar{\mu}_s|{\rm d}s +\frac{2(\bar{p}-1)}{\bar{p}}\sup\limits_{s\in [0,\tau]}|\bar{y}_s|^{\bar{p}}\\
&\ \ \ \ \ \ +\frac{1}{\bar{p}}\left(\int_0^\tau v_s^2{\rm d}s \right)^{\frac{\bar{p}}{2}}\left(\int_0^\tau |\bar{z}_s|^2{\rm d}s \right)^{\frac{\bar{p}}{2}}+\frac{1}{\bar{p}}\left(\int_0^\tau \bar{f}_s{\rm d}s\right)^{\bar{p}}<+\infty.
\end{split}
\end{align}
Combining \eqref{2.60}-\eqref{uyvz} yields that for each $t\geq0$,
\begin{align}\label{2.09*}
\int_{t\wedge\tau}^{\tau} |\bar{y}_s|^{\bar{p}-2}{\bf 1}_{|\bar{y}_s|\neq0}|\bar{z}_s|^2{\rm d}s<+\infty.
\end{align}
Furthermore, using the inequality $\bar{p}ab\leq \frac{{\bar{p}}^2 \theta}{4c(\bar{p})} a^2+\frac{c(\bar{p})}{\theta}b^2$ we obtain
\begin{align}\label{2.10}
\bar{p}\bar\nu_s|\bar{y}_s|^{\bar{p}-1}|\bar{z}_s|&=\bar{p}(\bar\nu_s
|\bar{y}_s|^{\frac{\bar{p}}{2}})(|\bar{y}_s|^{\frac{\bar{p}}{2}-1}|\bar{z}_s|)
\leq \frac{{\bar{p}}^2 \theta}{4c(\bar{p})} \bar\nu_{s}^2|\bar{y}_s|^{\bar{p}}+\frac{c(\bar{p})}{\theta}
|\bar{y}_s|^{\bar{p}-2}{\bf 1}_{|\bar{y}_s|\neq0}|\bar{z}_s|^2, \ \ s\in[0,\tau].
\end{align}
In light of \eqref{2.09*}, putting \eqref{2.10} into \eqref{2.60} yields that for each $t \geq0$,
\begin{align}\label{2.11}
\begin{split}
&|\bar{y}_{t\wedge\tau}|^{\bar{p}}+(1-\frac{1}{\theta})c(\bar{p})
\int_{t\wedge\tau}^{\tau} |\bar{y}_s|^{\bar{p}-2}{\bf 1}_{|\bar{y}_s|\neq0}|\bar{z}_s|^2{\rm d}s
+\bar{p} \int_{t\wedge\tau}^{\tau} \left[\bar{\rho}_s-(\bar{\mu}_s+\frac{\theta}{2[1\wedge(\bar{p}-1)]}
{\bar\nu_s}^2)\right]|\bar{y}_s|^{\bar{p}}{\rm d}s\\
&\ \ \leq |\bar{\xi}|^{\bar{p}}+\bar{p}\int_{t\wedge\tau}^{\tau} |\bar{y}_s|^{\bar{p}-1}\bar{f}_s{\rm d}s
-\bar{p}\int_{t\wedge\tau}^{\tau} |\bar{y}_s|^{\bar{p}-2}{\bf 1}_{|\bar{y}_s|\neq0}\langle \bar{y}_s,\bar{z}_s{\rm d}B_s\rangle.
\end{split}
\end{align}
Then, in light of \eqref{2.8}, taking the conditional mathematical expectation on both sides of \eqref{2.11} yields that for each $0\leq r\leq t<+\infty$,
\begin{align}\label{2.12}
\E\left[\int_{t\wedge\tau}^{\tau} |\bar{y}_s|^{\bar{p}-2}{\bf 1}_{|\bar{y}_s|\neq0}|\bar{z}_s|^2{\rm d}s\bigg|\F_{r\wedge\tau}\right]\leq \frac{\theta}{(\theta-1)c(\bar{p})}\E\left[|\bar{\xi}|^{\bar{p}}+\bar{p}
\int_{t\wedge\tau}^{\tau} |\bar{y}_s|^{\bar{p}-1}\bar{f}_s{\rm d}s\bigg|\F_{r\wedge\tau}\right].
\end{align}

On the other hand, by virtue of the conditional BDG inequality in \cite[Theorem 1]{Ren2008BDG} and the elementary inequality $2ab\leq a^2+b^2$, we also deduce that for each $0\leq r\leq t<+\infty$,
\begin{align*}
&\bar{p}\E\left[\sup_{u \in [t\wedge\tau,\tau]}\bigg|\int_{u}^{\tau}|\bar{y}_s|^{\bar{p}-2}{\bf 1}_{|\bar{y}_s|\neq0}\langle \bar{y}_{s}, \bar{z}_{s} {\rm d} B_{s}\rangle\bigg| \bigg|\F_{r\wedge\tau}\right]\\
&\ \  \leq
2\sqrt{2}\bar{p}\E\left[\left(\int_{t\wedge\tau}^{\tau}|\bar{y}_s|^{2\bar{p}-2}{\bf 1}_{|\bar{y}_s|\neq0}|\bar{z}_{s}|^2 {\rm d}{s}\right)^{\frac{1}{2}}\bigg|\F_{r\wedge\tau}\right]\\
&\ \ \leq 2\sqrt{2}\bar{p}\E\left[\left(\sup_{s\in[t\wedge\tau,\tau]}
|\bar{y}_s|^{\frac{\bar{p}}{2}}\right)\left(\int_{t\wedge\tau}^{\tau}
|\bar{y}_s|^{\bar{p}-2}{\bf 1}_{|\bar{y}_s|\neq0}|\bar{z}_{s}|^2{\rm d}{s}\right)^{\frac{1}{2}}\bigg|\F_{r\wedge\tau}\right]\\
&\ \ \leq \frac{1}{2}\E\left[\sup_{s\in[t\wedge\tau,\tau]}|\bar{y}_s|^{\bar{p}}
\bigg|\F_{r\wedge\tau}\right]+4\bar{p}^2
\E\left[\int_{t\wedge\tau}^{\tau}|\bar{y}_s|^{\bar{p}-2}{\bf 1}_{|\bar{y}_s|\neq0}|\bar{z}_{s}|^2{\rm d}{s}\bigg|\F_{r\wedge\tau}\right].
\end{align*}
Then, in light of the last inequality and the condition that $\bar{\rho}_t\geq \bar{\mu}_t+\frac{\theta}{2[1\wedge(\bar{p}-1)]}{\bar\nu}_t^2$ along with \eqref{2.11} and \eqref{2.12}, we have that for each $0\leq r\leq t< +\infty$,
\begin{align}\label{2.14}
\begin{split}
&\E\left[\sup_{s\in[t\wedge\tau,\tau]}|\bar{y}_s|^{\bar{p}}\bigg|
\F_{r\wedge\tau}\right]
+\E\left[\int_{t\wedge\tau}^{\tau}\bar{p}(\bar{\rho}_s-\bar{\mu}_s
-\frac{\theta}{2[1\wedge(\bar{p}-1)]}{\bar\nu_s}^2)|\bar{y}_s|^{\bar{p}}{\rm d}s\bigg|\F_{r\wedge\tau}\right]\\
&\ \ \leq \left(2+\frac{8\bar{p}^2 \theta}{(\theta-1)c(\bar{p})}\right)\E\left[|\bar{\xi}|^{\bar{p}}
+\bar{p}\int_{t\wedge\tau}^{\tau} |\bar{y}_s|^{\bar{p}-1}\bar{f}_s{\rm d}s\bigg|\F_{r\wedge\tau}\right].
\end{split}
\end{align}
Letting $M_{\bar{p},\theta}=\bar{p}(2+\frac{8\bar{p}^2 \theta}{(\theta-1)c(\bar{p})})$, by virtue of Young's inequality we deduce that for each $0\leq r\leq t< +\infty$,
\begin{align}\label{2.15}
\begin{split}
&M_{\bar{p},\theta}\E\left[\int_{t\wedge\tau}^{\tau} |\bar{y}_s|^{\bar{p}-1}\bar{f}_s{\rm d}s\bigg|\F_{r\wedge\tau}\right]\\
&\ \ \leq \E\left[\sup_{s\in[t\wedge\tau,\tau]}\left(\left(\frac{\bar{p}}{2\bar{p}-2}\right)^{\frac{\bar{p}-1}{\bar{p}}}|\bar{y}_s|^{\bar{p}-1}\right)\left(M_{\bar{p},\theta}\left(\frac{\bar{p}}{2\bar{p}-2}\right)^{\frac{1-\bar{p}}{\bar{p}}}\int_{t\wedge\tau}^{\tau} \bar{f}_s{\rm d}s\right)\bigg|\F_{r\wedge\tau}\right]\\
&\ \ \leq \frac{1}{2}\E\left[\sup_{s\in[t\wedge\tau,\tau]}|\bar{y}_s|^{\bar{p}}
\bigg|\F_{r\wedge\tau}\right]+\frac{M_{\bar{p},\theta}}{\bar{p}}
\left(\frac{\bar{p}}{2\bar{p}-2}\right)^{1-\bar{p}}\E\left[
\left(\int_{t\wedge\tau}^{\tau}\bar{f}_s{\rm d}s\right)^{\bar{p}}\bigg|\F_{r\wedge\tau}\right].
\end{split}
\end{align}
Thus, the desired assertion \eqref{2.600} follows from \eqref{2.14} and \eqref{2.15} and then \cref{pro2.02} is proved.
\end{proof}

Combining \cref{pro2.01,pro2.02}, we have the following proposition.

\begin{pro}\label{pro2.1}
Let the assumptions of \cref{pro2.01} hold. Then there exists a constant $C_{\bar{p},\theta}>0$ depending only on $\bar{p}$ and $\theta$ such that for each $0\leq r\leq t< +\infty$,
\begin{align*}
\begin{split}
&\E\left[\sup_{s\in[t\wedge\tau,\tau]}\left(e^{\bar{p}\int_{0}^{s}
\bar{\rho}_{r}{\rm d}r}|y_s|^{\bar{p}}\right)\bigg|\F_{r\wedge\tau}\right]+ \E\left[\left(\int_{t\wedge\tau}^{\tau}e^{2 \int_{0}^{s}\bar{\rho}_r{\rm d}r}|z_s|^2{\rm d}s\right)^\frac{\bar{p}}{2}\bigg|\F_{r\wedge\tau}\right]\\
&\hspace{0.6cm}+\E\left[\int_{t\wedge\tau}^{\tau}\bar{p}e^{\bar{p} \int_{0}^{s}\bar{\rho}_r{\rm d}r}(\bar{\rho}_s-\bar{\mu}_s-\frac{\theta}{2[1\wedge(\bar{p}-1)]}
{\bar\nu_s}^2)|y_s|^{\bar{p}}{\rm d}s\bigg|\F_{r\wedge\tau}\right]\\
&\ \ \leq
C_{\bar{p},\theta}\E\left[e^{\bar{p}\int_{0}^{\tau}\bar{\rho}_{r}{\rm d}r}|\xi|^{\bar{p}}+\left(
\int_{t\wedge\tau}^{\tau}e^{ \int_{0}^{s}\bar{\rho}_r{\rm d}r}f_s{\rm d}s\right)^{\bar{p}}\bigg|\F_{r\wedge\tau}\right].
\end{split}
\end{align*}
\end{pro}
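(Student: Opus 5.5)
The estimate is obtained by chaining \cref{pro2.01} and \cref{pro2.02}, which are both available under the present hypotheses. Throughout, fix $0\le r\le t<+\infty$ and abbreviate
$$A:=\E\left[e^{\bar p\int_0^\tau\bar\rho_r{\rm d}r}|\xi|^{\bar p}+\left(\int_{t\wedge\tau}^{\tau}e^{\int_0^s\bar\rho_r{\rm d}r}f_s{\rm d}s\right)^{\bar p}\bigg|\F_{r\wedge\tau}\right].$$

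\textbf{Step 1 (value process and drift term).} \cref{pro2.02} gives directly that the first and third terms on the left-hand side, namely the conditional expectation of the weighted supremum of $|y_s|^{\bar p}$ and of the nonnegative integral involving $\bar\rho_s-\bar\mu_s-\frac{\theta}{2[1\wedge(\bar p-1)]}\bar\nu_s^2$, are together bounded by $C^2_{\bar p,\theta}A$. In particular, since the integral term is nonnegative by assumption \ref{A:A}, the conditional supremum term alone is bounded by $C^2_{\bar p,\theta}A$.

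\textbf{Step 2 (the $z$-term).} By \cref{pro2.01}, which also ensures $z_\cdot\in M_\tau^{\bar p}(\bar\rho_\cdot;\R^{k\times d})$ so that every quantity is finite, the conditional $z$-term is bounded by
$$C^1_{\bar p,\theta}\E\left[\sup_{s\in[t\wedge\tau,\tau]}\left(e^{\bar p\int_0^s\bar\rho_r{\rm d}r}|y_s|^{\bar p}\right)+\left(\int_{t\wedge\tau}^\tau e^{\int_0^s\bar\rho_r{\rm d}r}f_s{\rm d}s\right)^{\bar p}\bigg|\F_{r\wedge\tau}\right].$$
Substituting the Step 1 bound for the supremum term shows that this is at most $C^1_{\bar p,\theta}(C^2_{\bar p,\theta}+1)A$.

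\textbf{Conclusion.} Adding the two bounds, the assertion holds with
$$C_{\bar p,\theta}:=C^2_{\bar p,\theta}+C^1_{\bar p,\theta}(C^2_{\bar p,\theta}+1),$$
which depends only on $\bar p$ and $\theta$. There is no real obstacle. The only point that needs checking is that both propositions are applied with the same $t$ and the same conditioning $\sigma$-field $\F_{r\wedge\tau}$, so that their right-hand sides are directly comparable; this is indeed the case as both are stated for the same pair $0\le r\le t$.
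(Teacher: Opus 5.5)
Your proposal is correct and is the same argument the paper uses: the paper obtains this proposition directly by combining \cref{pro2.01} and \cref{pro2.02}. Like you, it bounds the supremum term through \cref{pro2.02}, using the nonnegativity of the drift integral that assumption \ref{A:A} guarantees, and then inserts that bound into the $z$-estimate of \cref{pro2.01}.
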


\begin{rmk}\label{rmk:2.5}
We make the following two comments.
\begin{itemize}
\item [(i)] For the case of $\bar{p}=2$, if the term $2 |\bar{y}_s|\bar{f}_s$ of \eqref{2.3} and \eqref{2.11} is left to be unaltered in the proof of \cref{pro2.01,pro2.02}, then it is straightforward to arrive at a similar result to that presented in (2.11) of \citet{Li2025}. This result demonstrates that under the assumptions of \cref{pro2.01} with $\bar{p}=2$, there exists a constant $C_\theta>0$ depending only on $\theta$ such that for each $0\leq r\leq t< +\infty$,
\begin{align*}
\begin{split}
&\E\left[\sup_{s\in[t\wedge\tau,\tau]}\left(e^{2 \int_{0}^{s}\bar{\rho}_r{\rm d}r}|y_s|^2\right)\bigg|\F_{r\wedge\tau}\right]+
\E\left[\int_{t\wedge\tau}^{\tau}e^{2 \int_{0}^{s}\bar{\rho}_r{\rm d}r}|z_s|^2{\rm d}s\bigg|\F_{r\wedge\tau}\right]\\
&\ \ \leq
C_\theta\E\left[e^{2 \int_{0}^{\tau}\bar{\rho}_r{\rm d}r}|\xi|^2+
\int_{t\wedge\tau}^{\tau}e^{2 \int_{0}^{s}\bar{\rho}_r{\rm d}r}|y_s|f_s{\rm d}s\bigg|\F_{r\wedge\tau}\right].
\end{split}
\end{align*}

\item [(ii)] Note that when $\beta\geq 1$ and $\bar{\mu}_\cdot$ is a nonnegative process, we have
$$
\bar{\rho}_\cdot:=\beta\bar{\mu}_\cdot+\frac{\theta}{2[1\wedge(p-1)]}
{\bar\nu}_\cdot^2\geq \bar{\mu}_\cdot+\frac{\theta}{2[1\wedge(p-1)]}{\bar\nu}_\cdot^2.
$$
In light of (i), we know that \cref{pro2.1} incorporates Proposition 2.4 in \citet{LiFan2024SD} and Proposition 2.2 in \cite{Li2025} as its particular case.
\end{itemize}
\end{rmk}

\section{Well-posedness for the weighted $L^p$ solutions}
\setcounter{equation}{0}
In this section, we will establish an existence and uniqueness theorem, a continuous dependence property and a stability theorem for the weighted $L^p$ solution of BSDE \eqref{BSDE1.1}. The generator $g$ satisfies a  stochastic monotonicity condition with a very general growth in the state variable $y$, and a stochastic Lipschitz continuity condition in the state variable $z$. In addition, we will present some corollaries, all of which can be derived from the existence and uniqueness theorem, as well as some remarks and examples to compare them with existing relevant findings.

In the rest of this paper, we always assume that $\beta\geq1$, $p>1$ and $\theta>1$ are three given constants, $\rho_t(\omega), \mu_t(\omega)$: $\Omega \times [0,\tau]\mapsto \R$ and $\nu_t(\omega)$: $\Omega \times [0,\tau]\mapsto \R_+$ are three $(\F_t)$-progressively measurable processes satisfying $\int_{0}^{\tau}\left(|\rho_t|+|\mu_t|+\nu_t^2\right){\rm d}t<+\infty$ and
$$
\rho_\cdot\geq \mu_\cdot+\frac{\theta}{2[1\wedge(p-1)]}\nu_\cdot^2.
$$
\subsection{Existence and uniqueness\vspace{0.2cm}}
Let us first introduce the following assumptions on the generator $g:\Omega\times[0,\tau]\times \R^k\times \R^{k\times d}\mapsto \R^k$.
\begin{enumerate}
\renewcommand{\theenumi}{(H1)}
\renewcommand{\labelenumi}{\theenumi}
\item\label{A:H1} $\E\left[\left(\int_0^\tau e^{\int_{0}^{s}\rho_r{\rm d}r}|g(s,0,0)|{\rm d}s\right)^p\right]<+\infty;$
\renewcommand{\theenumi}{(H2)}
\renewcommand{\labelenumi}{\theenumi}
\item\label{A:H2} ${\rm d}\mathbb{P}\times{\rm d} t-a.e.$, for each $z\in\R^{k\times d}$, $y \mapsto g(\omega,t,y,z)$ is continuous;
\renewcommand{\theenumi}{(H3)}
\renewcommand{\labelenumi}{\theenumi}
\item\label{A:H3} $g$ has a very general growth in $y$, i.e., there exists an $(\F_t)$-progressively measurable real-valued process $(\alpha_t)_{t\in[0,\tau]}$ satisfying $\essinf\limits_{t\in[0,\tau]}\alpha_t>0$ such that for each $r\in \R_+$, it holds that
\begin{align}\label{3.01}
\E\left[\int_0^\tau \alpha_t\psi(t,r\alpha_t){\rm d}t\right]<+\infty
\end{align}
with
$$
\psi(\omega,t,r):=\sup_{|y|\leq r} \left|g(\omega,t,y,0)-g(\omega,t,0,0)\right|, \ \ (\omega,t,r)\in \Omega\times [0,\tau(\omega)]\times \R_+;
$$
\renewcommand{\theenumi}{(H4)}
\renewcommand{\labelenumi}{\theenumi}
\item\label{A:H4} $g$ satisfies a stochastic monotonicity condition in $y$, i.e., for each $y_1, y_2\in\R^k$  and $z\in\R^{k\times d}$,
$$\left\langle y_1-y_2,g(\omega,t,y_1,z)-g(\omega,t,y_2,z)\right\rangle\leq \mu_t(\omega)|y_1-y_2|^2, \ \ t\in[0,\tau(\omega)];$$
\renewcommand{\theenumi}{(H5)}
\renewcommand{\labelenumi}{\theenumi}
\item\label{A:H5} $g$ satisfies a stochastic Lipschitz continuity condition in $z$, i.e., for each $y\in\R^k$  and $z_1,z_2\in\R^{k\times d}$,
$$\left|g(\omega,t,y,z_1)-g(\omega,t,y,z_2)\right|\leq \nu_t(\omega)|z_1-z_2|, \ \ t\in[0,\tau(\omega)].$$
\end{enumerate}
\begin{rmk}\label{rmk:3.0*}
Note that \ref{A:H2}, \ref{A:H4} and \ref{A:H5} come from \citet{Li2025} and \citet{LiFan2024SD}. However, the process $\mu_\cdot$ appearing in these two references can only take values in $\R_+$. Here, we would like to provide two typical examples satisfying assumption \ref{A:H4} with a general real-valued process $\mu_\cdot$. Let $k=1$, $(a_t)_{t\in [0,\tau]}$ be an $(\F_t)$-progressively measurable nonnegative process and $h_i(y), i=1,2,3$ be three first-order continuously differentiable functions such that $h'_1(y)\leq1$, $h'_2(y)\leq-1$ and $h'_3(y)\leq0$ for each $y\in\R$. For each $(\omega,t,y,z)\in \Omega\times [0,\tau(\omega)]\times\R\times\R^{1\times d}$, define\vspace{-0.1cm}
\begin{align*}
g(\omega,t,y,z):=\mu_t^{+}(\omega) h_1(y) +\mu_t^{-}(\omega) h_2(y)\vspace{-0.1cm}
\end{align*}
and\vspace{-0.2cm}
\begin{align*}
\bar{g}(\omega,t,y,z):=\mu_t(\omega)y +a_t(\omega) h_3(y).
\end{align*}
By Lagrange's Mean Value Theorem, it is easy to verify that both $g$ and $\bar g$ satisfy \ref{A:H4}.
\end{rmk}

Next, let us introduce several existing assumptions that are closely related to \ref{A:H1} and \ref{A:H3}.\vspace{0.1cm}

\begin{enumerate}
\renewcommand{\theenumi}{(H1a)}
\renewcommand{\labelenumi}{\theenumi}
\item\label{A:H1a} $\Dis \E\left[\left(\int_0^\tau e^{\int_0^s (\beta \mu_r^+ +\frac{\theta}{2[1\wedge(p-1)]}\nu_r^2)\dif r} |g(s,0,0)|{\rm d}s\right)^{p}\right]<+\infty$;
\renewcommand{\theenumi}{(H1b)}
\renewcommand{\labelenumi}{\theenumi}
\item\label{A:H1b} $\Dis \E\left[\left(\int_0^\tau |g(s,0,0)|{\rm d}s\right)^p\right]<+\infty$;
\renewcommand{\theenumi}{(H3a)}
\renewcommand{\labelenumi}{\theenumi}
\item\label{A:H3a}
For each $r\in \R_+$, it holds that
\begin{align*}
\E\left[\int_0^\tau e^{\beta \int_{0}^{t}\mu^+_s{\rm d}s}\psi(t,r\alpha_t){\rm d}t\right]<+\infty,
\end{align*}
where the function $\psi(\cdot,\cdot)$ and the process $(\alpha_t)_{t\in [0,\tau]}$ are defined in \ref{A:H3};
\renewcommand{\theenumi}{(H3b)}
\renewcommand{\labelenumi}{\theenumi}
\item\label{A:H3b}
For each $r\in \R_+$, it holds that
\begin{align*}
\E\left[\int_0^\tau \psi(t,r)\dif t\right]<+\infty,
\end{align*}
where the function $\psi(\cdot,\cdot)$ are defined in \ref{A:H3}.
\end{enumerate}

\begin{rmk}\label{rmk:3.1}
Regarding to these assumptions, we have the following two remarks.
\begin{itemize}
\item [(i)] \ref{A:H1a} is used in \citet{LiFan2024SD} and stronger than \ref{A:H1b} employed in classical literature studying the adapted solution for BSDEs in non-weighted spaces. Since the process $\rho_\cdot$ in \ref{A:H1} only needs to satisfy $\rho_\cdot\geq \mu_\cdot+\frac{\theta}{2[1\wedge(p-1)]}\nu_\cdot^2$ and the process $\mu_\cdot$ can take values in $\R$, \ref{A:H1} can be strictly weaker than \ref{A:H1a} and \ref{A:H1b}, particularly when $\mu_\cdot$ is a negative unbounded process.

\item [(ii)] As demonstrated in Example 3.8 of \citet{Li2025}, \ref{A:H3a} with $\mu^+_\cdot\equiv 0$ is strictly weaker than \ref{A:H3b} that was frequently used in the study of BSDEs with (weak) monotonicity generators such as \citet{Briand2003}, \citet{FanJiang2013}, \citet{Xiao2015} and so on. It is obvious that \ref{A:H3} is weaker than \ref{A:H3a}. However, up to now we do not know whether \ref{A:H3} is strictly weaker than \ref{A:H3a}.
\end{itemize}
\end{rmk}

The following theorem is one of the main results in this paper.

\begin{thm}\label{thm:3.1}
Let the generator $g$ satisfy assumptions \ref{A:H1}-\ref{A:H5}. Then, for each $\xi\in L_\tau^p(\rho_\cdot;\R^k)$, BSDE \eqref{BSDE1.1} admits a unique weighted $L^p$ solution $(y_t,z_t)_{t\in[0,\tau]}$ in the space of $H_\tau^p(\rho_\cdot;\R^{k}\times\R^{k\times d})$.
\end{thm}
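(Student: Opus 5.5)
Uniqueness comes first, from the a priori estimate \cref{pro2.1}. Let $(y^1,z^1)$ and $(y^2,z^2)$ be two solutions in $H_\tau^p(\rho_\cdot;\R^k\times\R^{k\times d})$. Set $\delta y=y^1-y^2$ and $\delta z=z^1-z^2$. Then $(\delta y,\delta z)$ solves the BSDE with terminal value $0$ and generator
$$\tilde g(t,y,z):=g(t,y+y^2_t,z+z^2_t)-g(t,y^2_t,z^2_t).$$
By \ref{A:H4} and \ref{A:H5},
$$\langle \hat y,\tilde g(t,y,z)\rangle\le \mu_t|y|+\nu_t|z|,$$
so assumption \ref{A:A} holds with $\bar\rho=\rho$, $\bar\mu=\mu$, $\bar\nu=\nu$, $\bar p=p$ and $f\equiv0$. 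Since $\delta y\in S^p_\tau(\rho_\cdot;\R^k)$, \cref{pro2.1} gives $\delta y\equiv0$ and $\delta z\equiv 0$.

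For existence, I would first remove the weight. The change of variables $\bar y_t=e^{\int_0^t\rho_r dr}y_t$, $\bar z_t=e^{\int_0^t\rho_rdr}z_t$ turns the problem into a BSDE for $(\bar y,\bar z)$ in the unweighted space $H^p_\tau(0)$, with terminal value $\bar\xi=e^{\int_0^\tau\rho}\xi\in L^p$ and generator
$$\bar g(t,\bar y,\bar z)=e^{\int_0^t\rho}g\bigl(t,e^{-\int_0^t\rho}\bar y,e^{-\int_0^t\rho}\bar z\bigr)-\rho_t\bar y .$$
This $\bar g$ is monotone in $\bar y$ with coefficient $\mu_t-\rho_t\le -\frac{\theta}{2[1\wedge(p-1)]}\nu_t^2\le 0$, and Lipschitz in $\bar z$ with coefficient $\nu_t$. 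The drawback is that the growth condition \ref{A:H3} is stated in the original variables and is not invariant under this change, so I would rather work directly in the original variables and follow the scheme of \cite[Theorem 3.1]{Li2025}, in three stages.

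\emph{Stage 1 (bounded data, generator independent of $z$).} Assume $|\xi|e^{\int_0^\tau\rho}$ and $|g(t,0,0)|e^{\int_0^t\rho}/\alpha_t$ are bounded in a suitable sense. Truncate in two ways: replace $g$ by $g_n(t,y)=g(t,\pi_{n}(y),0)$, where $\pi_n$ projects onto the random ball of radius proportional to $\alpha_t e^{-\int_0^t\rho}$ times a constant, and mollify in $y$ to get Lipschitz generators. Also localize the time horizon to $\tau\wedge n$ and cut off $\mu,\nu,\rho$. For the resulting BSDEs classical theory gives solutions $(y^n,z^n)$. Because the generator depends only on $y$, a comparison/Itô argument on $|y^n|^2$ with the monotonicity should yield a uniform bound $|y^n_t|\le C\alpha_t e^{-\int_0^t\rho_r dr}$ (this is where $\operatorname{ess\,inf}\alpha>0$ is used). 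Consequently the truncation is inactive for large $n$, and \ref{A:H3} yields
$$\E\int_0^\tau \alpha_t\psi(t,C\alpha_t)\,dt<\infty .$$
This integrability is exactly what is needed to show $(y^n,z^n)$ is Cauchy in $H^p_\tau(\rho_\cdot)$. The Cauchy estimate is obtained by applying \cref{pro2.1} to differences: the cross term $\langle \delta y,g_n-g_m\rangle$ is bounded by $C\alpha_t\,|g_n-g_m|$, which tends to $0$ in $L^1$ by dominated convergence.

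\emph{Stage 2 (general $\xi$ and $g(\cdot,0,0)$, still no $z$).} Approximate $\xi$ and $g(t,0,0)$ by truncations $q_n(\xi)$ and $g(t,y)-g(t,0)+q_n(g(t,0))$ with truncation level adapted to the weight $e^{\int_0^t\rho}$ and to $\alpha_t$. Again \cref{pro2.1}, applied to differences with $f_t=|\delta g(t,0)|$, shows the sequence is Cauchy via \ref{A:H1} and $\xi\in L^p_\tau(\rho_\cdot)$. The continuity assumption \ref{A:H2} is needed to pass to the limit in the generator.

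\emph{Stage 3 (dependence on $z$).} For a fixed $V\in M^p_\tau(\rho_\cdot)$, solve the BSDE with generator $g(t,y,V_t)$ using Stage 2. By \ref{A:H5}, the term $|g(t,0,V_t)-g(t,0,0)|\le\nu_t|V_t|$ satisfies \ref{A:H1}, after possibly shrinking $\theta$ slightly so that $\nu$ is absorbed. To see this, one uses Hölder:
$$\Bigl(\int e^{\int\rho}\nu|V|\Bigr)^p\le \Bigl(\int\nu^2\Bigr)^{p/2}\Bigl(\int e^{2\int\rho}|V|^2\Bigr)^{p/2}.$$
This bound is problematic because $\int_0^\tau\nu^2$ is unbounded. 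I would therefore first localize in time: on random subintervals where $\int\nu^2$ is small, the map $V\mapsto z$ is a contraction by \cref{pro2.1}, and I would paste these pieces backward. Alternatively, I would use a contraction for the map $(U,V)\mapsto(y,z)$ in an equivalent norm with extra weight $e^{\gamma\int_0^t\nu_r^2dr}$, exploiting the gap $\theta>1$.

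I expect the main obstacle to be Stage 1/2 under the weak growth \ref{A:H3}: obtaining the pointwise bound by $\alpha_t e^{-\int_0^t\rho}$ uniformly in the truncation, when $\rho$ and $\mu$ may be negative and unbounded. This is what lets the truncated generators converge using only $\E\int\alpha\psi(\cdot,r\alpha)<\infty$ instead of \ref{A:H3a}. Matching truncation radii to $\alpha_t$ and constructing the auxiliary functions is the delicate part.
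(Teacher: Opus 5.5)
Your uniqueness argument is the paper's. The existence plan, however, has a genuine gap at the $z$-stage, and this gap is tied to the absence of a reduction to $p=2$.

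You correctly see that $\nu_t|V_t|$ need not satisfy \ref{A:H1} with the weight $\rho_\cdot$, but neither of your remedies works.
Pasting backward over random intervals on which $\int\nu_r^2{\rm d}r\le\eps$ involves an a.s.\ finite but unbounded number of pieces. So there is no deterministic last piece to start from, and the per-piece constants compound. Besides, \cref{pro2.1} presupposes $y\in S_\tau^p(\rho_\cdot;\R^k)$, which pasting does not give.
A norm with an extra factor $e^{\gamma\int_0^t\nu_r^2{\rm d}r}$ is not equivalent to the $H_\tau^p(\rho_\cdot)$-norm when $\int_0^\tau\nu_t^2{\rm d}t$ is unbounded. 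For $\gamma>0$ it also needs more integrability of $\xi$ and $g(\cdot,0,0)$ than is assumed.

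The missing idea has two parts.
\begin{itemize}
\item Solve the frozen equation in the \emph{smaller} weight $\rho_\cdot-\frac{\theta}{2}\nu_\cdot^2\geq\mu_\cdot$. There the pathwise bound $\int_0^\tau e^{-\theta\int_0^s\nu_r^2{\rm d}r}\nu_s^2{\rm d}s\le\frac{1}{\theta}$ puts $g(t,0,V_t)$ into \ref{A:H1'}.
\item Lift the solution to $H_\tau^2(\rho_\cdot)$. Apply It\^{o}--Tanaka to $e^{\int_0^t(\rho_r-\frac{\theta}{2}\nu_r^2){\rm d}r}|y_t|$, multiply by $e^{\frac{\theta}{2}\int_0^t\nu_r^2{\rm d}r}$, then use Cauchy--Schwarz and Doob.
\end{itemize}
The contraction of $V\mapsto z$ on $M_\tau^2(\rho_\cdot)$, with constant $1/\theta$, then follows from It\^{o}'s formula for $e^{2\int_0^t\rho_r{\rm d}r}|\delta y_t|^2$. \cref{pro2.1} cannot supply it, since $C_{p,\theta}$ is not small and $f=\nu|\delta V|$ is not controlled by $\|\delta V\|_{p;\rho_\cdot}$.

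Since this is an $L^2$ argument, a last step is needed for general $p$. Because $\rho_\cdot\geq\mu_\cdot+\frac{\theta}{2}\nu_\cdot^2$ for every $p>1$, one solves in $H_\tau^2(\rho_\cdot)$ with $\xi$ and $g(\cdot,0,0)$ truncated at levels $ne^{-\int_0^\tau\rho_r{\rm d}r}$ and $ne^{-t}e^{-\int_0^t\rho_r{\rm d}r}$. Then $e^{\int_0^t\rho_r{\rm d}r}y_t^n$ is bounded, so $(y^n,z^n)\in H_\tau^p(\rho_\cdot)$ by \cref{pro2.01,pro2.02}. The Cauchy property in $H_\tau^p(\rho_\cdot)$ comes from \cref{pro2.1} with $f$ the truncation error of $g(\cdot,0,0)$.

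Stage 1 does not hold up either, for four reasons.
\begin{itemize}
\item The uniform bound you identify as the crux is only asserted. In the form $|y_t^n|\le C\alpha_te^{-\int_0^t\rho_r{\rm d}r}$ it is unusable: when $\rho_\cdot$ can be negative this is not of the form $r\alpha_t$, so \ref{A:H3} says nothing about $\psi(t,C\alpha_te^{-\int_0^t\rho_r{\rm d}r})$.
\item Once $\mu_\cdot$ takes negative values, $g(t,\pi_n(y),0)$ satisfies neither \ref{A:H4} nor the one-sided bound of \ref{A:A} with coefficient $\mu_\cdot$; only $\mu_\cdot^+$ survives. So \cref{pro2.02} is not available in the weight $\rho_\cdot$.
\item The localization to $\tau\wedge n$ and the cut-offs of $\mu,\nu,\rho$ add limit passages you never carry out.
\item Your Cauchy estimate via $C\alpha_t|g_n-g_m|$ is the $p=2$ estimate of \cref{rmk:2.5}(i). \cref{pro2.1} would instead require $\big(\int_0^\tau e^{\int_0^s\rho_r{\rm d}r}|g_n-g_m|{\rm d}s\big)^p$ to vanish in $L^1$, which \ref{A:H3} does not provide.
\end{itemize}

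The paper proceeds differently.
\begin{itemize}
\item It works first in the weight $\tilde\rho_\cdot^+$ and replaces $\alpha_\cdot$ by a nonincreasing $\bar\alpha_\cdot\le e^{-\int_0^\cdot\tilde\rho_s^+{\rm d}s}$.
\item It assumes $|\xi|\le K\bar\alpha_\tau^3$ and $|g(t,0)|\le Ke^{-t}\bar\alpha_t^3$. Then \cref{pro2.02} yields, uniformly in $n$, both $|y_t^n|\le r\bar\alpha_t^2$ and $e^{2\int_0^t\tilde\rho_s^+{\rm d}s}|y_t^n|\le r\bar\alpha_t$; this is what the cubic power buys.
\item It truncates multiplicatively by $\theta_r^{\bar\alpha_\cdot}(t,|y|)$ times $ne^{-t}/\big(\overline{\psi}(t,(r+1)\bar\alpha_t)\vee ne^{-t}\bar\alpha_t^2\big)$. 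This keeps a monotonicity condition with coefficient $\tilde\rho_t^++ne^{-t}$, so \cite[Proposition 4.1]{Li2025} gives each approximant.
\item Only afterwards are the data truncation removed and the weight lowered to a general $\tilde\rho_\cdot\geq\mu_\cdot$.
\end{itemize}
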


\begin{proof}[The Proof of the uniqueness part.]
Assume that $(y_\cdot,z_\cdot)$ and $(y_\cdot',z_\cdot')$ are two weighted $L^p$ solutions in the space of $H_\tau^p(\rho_\cdot;\R^{k}\times\R^{k\times d})$ of BSDE \eqref{BSDE1.1}. The process $(y_\cdot-y_\cdot',z_\cdot-z_\cdot')$ is denoted by $(\tilde{y}_\cdot,\tilde{z}_\cdot)$. It is obvious that $(\tilde{y}_\cdot,\tilde{z}_\cdot)$ is a weighted $L^p$ solution in $H_\tau^p(\rho_\cdot;\R^{k}\times\R^{k\times d})$ of the following BSDE:
\begin{align}\label{2.5}
\tilde{y}_t=\int_t^\tau\tilde{g}(s,\tilde{y}_s,\tilde{z}_s){\rm d}s-\int_t^\tau\tilde{z}_s{\rm d}B_s, \ \ t\in[0,\tau],
\end{align}
where the generator $\tilde g$ is defined by
$$
\tilde{g}(t,y,z):=g(t,y+y_t',z+z_t')-g(t,y_t',z_t'),\ \ (t,y,z)\in [0,\tau]\times\R^k\times\R^{k\times d}.
$$
It follows from \ref{A:H4} and \ref{A:H5} that for each $(y,z)\in\R^k\times\R^{k\times{d}}$,
\begin{align*}
\begin{split}
\left<\hat{y},\tilde{g}(t,y,z)\right>&=\left<\hat{y},g(t,y+y_t',z+z_t')-g(t,y_t',z+z_t')+g(t,y_t',z+z_t')-g(t,y_t',z_t')\right>\\
&\leq \mu_t|y|+\nu_t|z|, \ \ t\in[0,\tau],
\end{split}
\end{align*}
which means that assumption (A) is satisfied by the generator $\tilde{g}$ of BSDE \eqref{2.5} with $\bar{p}:=p$, $\bar\mu_t:=\mu_t$, $\bar\nu_t:=\nu_t$, $\bar\rho_t:=\rho_t$ and $f_t\equiv 0$. Thus, it follows from \cref{pro2.1} with $r=t=0$ that
\begin{align*}
\begin{split}
&\E\left[\sup_{s\in[0,\tau]}\left(e^{p \int_{0}^{s}\rho_r{\rm d}r}|\tilde{y}_s|^p\right)\right]+\E\left[\left(\int_{0}^{\tau}e^{2 \int_{0}^{s}\rho_r{\rm d}r}|\tilde{z}_s|^2{\rm d}s\right)^{\frac{p}{2}}\right]=0.
\end{split}
\end{align*}
Therefore, $(\tilde{y}_t,\tilde{z}_t)_{t\in[0,\tau]}=(0,0)$. The proof of the uniqueness part is then completed.
\end{proof}

The proof of the existence part of \cref{thm:3.1} will be delineated in Section 4 due to its complexity. Here, we would like to briefly outline the proof.

\begin{proof}[Outline of the proof of the existence part of \cref{thm:3.1}.]
The main idea of the proof originates from \citet[Theorem 3.1]{Li2025} and \cite[Theorem 3.1]{LiFan2024SD}. However, due to the general weighted factor and the very general growth condition of the generator $g$ in $y$, we will encounter some new barriers in the proof of \cref{thm:3.1}, requiring us to improve original techniques and develop novel methods.\vspace{0.2cm}

We first study the special case that $p=2$ and the generator $g$ does not depend on $z$. More specifically, we aim to prove that for each $\xi\in L_\tau^2(\rho_\cdot;\R^{k})$ with $\rho_\cdot\geq \mu_\cdot$, the following BSDE
\begin{align}\label{eq:3.5}
  y_t=\xi+\int_t^\tau g(s,y_s){\rm d}s-\int_t^\tau z_s{\rm d}B_s, \ \ t\in[0,\tau]
\end{align}
admits a weighted $L^2$ solution in $H_\tau^2(\rho_\cdot;\R^{k}\times\R^{k\times d})$ under \ref{A:H1}-\ref{A:H4} in subsequent three steps.

(i) Under \ref{A:H2} and \ref{A:H4}, by virtue of
\cite[Proposition 4.1]{Li2025} we conclude that BSDE \eqref{eq:3.5} admits a unique weighted $L^2$ solution in $H_\tau^2(\rho_\cdot^+;\R^{k}\times\R^{k\times d})$ provided that there is a constant $K\geq 0$ such that
\begin{align*}
|\xi|\leq K\alpha_\tau \ \ {\rm and}\ \ |g(t,\cdot)|\leq Ke^{-t}\alpha_t, \ \  t\in[0,\tau].
\end{align*}

(ii) Under \ref{A:H2}-\ref{A:H4}, we prove that the conclusion of (i) remains true provided that there exists a nonnegative constant $K$ such that
\begin{align*}
|\xi|\leq K \alpha_\tau^3 \ \ {\rm and}\ \ |g(t,0)|\leq Ke^{-t}\alpha_t^3, \ \  t\in[0,\tau].
\end{align*}
In this step, by introducing a suitable auxiliary function $\theta_{r}^{\alpha_\cdot}$ we first construct an approaching sequence $\{g^n\}$ of the generator $g$ satisfying the conditions of (i) such that for each $n\geq 1$, BSDE$(\xi,\tau,g^n)$ admits a unique weighted $L^2$ solution $(y_t^n,z_t^n)_{t\in[0,\tau]}$ in $H_\tau^2(\rho_\cdot^+;\R^{k}\times\R^{k\times d})$. Then, with the help of \cref{pro2.1}, we verify that $\{(y_\cdot^n,z_\cdot^n)\}^{+\infty}_{n=1}$ is a Cauchy sequence in $H_\tau^2(\rho_\cdot^+;\R^{k}\times\R^{k\times d})$ and has a limit process $(y_\cdot,z_\cdot)$. Finally, by a delicate argument we verify that $(y_\cdot,z_\cdot)$ is the desired solution of BSDE \eqref{eq:3.5}, breaking through the restriction of the very general growth condition.

(iii) We remove the extra conditions in (ii) and prove the existence of a weighted $L^2$ solution to BSDE \eqref{eq:3.5} in the space of $H_\tau^2(\rho_\cdot;\R^{k}\times\R^{k\times d})$ under assumptions \ref{A:H1}-\ref{A:H4}. In this step, by introducing a truncation function as usual, we first construct the approximate sequence $\{\xi_n\}$ of terminal values and the approximate sequence $\{\bar g_n\}$ of generators satisfying the conditions of (ii) such that for each $n\geq 1$, there exists a unique weighted $L^2$ solution $(y_t^n,z_t^n)_{t\in[0,\tau]}$ of BSDE$(\xi_n,\tau,\bar g_n)$ in $H_\tau^2(\rho_\cdot^+;\R^{k}\times\R^{k\times d}) \subseteq H_\tau^2(\rho_\cdot;\R^{k}\times\R^{k\times d})$. Then by virtue of \cref{pro2.1} we prove that $\{(y_\cdot^n,z_\cdot^n)\}^{+\infty}_{n=1}$ is a Cauchy sequence in $H_\tau^2(\rho_\cdot;\R^{k}\times\R^{k\times d})$ and the limit process is exactly the desired solution of BSDE \eqref{eq:3.5}, further breaking through the restrictions of the general weighted space and the general growth condition.\vspace{0.2cm}

Next, under the general case that $g$ can depend on $z$, we aim to prove the existence part of \cref{thm:3.1} with $p=2$ and $\rho_\cdot\geq \mu_\cdot +\frac{\theta}{2}\nu_\cdot^2$ in subsequent two steps.

(iv) We prove that for each $\xi\in L_\tau^2(\rho_\cdot;\R^{k})$ and $V_\cdot\in M_\tau^2(\rho_\cdot;\R^{k\times d})$, the following BSDE
\begin{align}\label{eq:3.6}
  y_t=\xi+\int_t^\tau g(s,y_s,V_s){\rm d}s-\int_t^\tau z_s{\rm d}B_s, \ \ t\in[0,\tau].
\end{align}
admits a unique weighted $L^2$ solution in $H_\tau^2(\rho_\cdot;\R^{k}\times\R^{k\times d})$ under assumptions \ref{A:H1}-\ref{A:H5} with $p=2$.
In this step, based on the conclusion of (iii), we first derive that BSDE \eqref{eq:3.6} admits a unique adapted solution $(y_t,z_t)_{t\in[0,\tau]}$ in the space of $H_\tau^2(\rho_\cdot-\frac{\theta}{2}\nu_\cdot^2;\R^{k}\times\R^{k\times d})$. Then, by employing techniques analogous to those used in the first step of the proof of the existence part of \citet[Theorem 3.1]{Li2025} and combining with It\^{o}'s formula, Doob's martingale inequality and \cref{pro2.01}, we verify that the process $(y_t,z_t)_{t\in[0,\tau]}$ also belongs to the space of $H_\tau^2(\rho_\cdot;\R^{k}\times\R^{k\times d})$.

(v) Under assumptions \ref{A:H1}-\ref{A:H5} with $p=2$, by the conclusion of (iv) and the fixed point theorem we prove that for each $\xi\in L_\tau^2(\rho_\cdot;\R^{k})$, BSDE \eqref{BSDE1.1} admits a unique weighted $L^2$ solution $(y_t,z_t)_{t\in[0,\tau]}$ in the space of $H_\tau^2(\rho_\cdot;\R^{k}\times\R^{k\times d})$. To do this, we choose the process $z_\cdot$ in the unique weighted $L^2$ solution of BSDE \eqref{eq:3.6}
to be the image of $V_\cdot$ and construct a mapping $\Phi$ from $M_\tau^2(\rho_\cdot;\R^{k\times d})$ to itself such that $\Phi(V_\cdot)=z_\cdot$, and then prove that $\Phi$ is a strictly contractive mapping on $M_\tau^2(\rho_\cdot;\R^{k\times d})$.\vspace{0.2cm}

Finally, based on (v) we prove the existence part of \cref{thm:3.1}.

(vi) In this final step, we first construct the approximate sequence $\{\xi^n\}$ of terminal values and the approximate sequence $\{\bar g^n\}$ of generators satisfying \ref{A:H1}-\ref{A:H5} with $p=2$ via the same truncation function used in (iii) such that, in light of $\rho_\cdot\geq \mu_\cdot+\frac{\theta}{2[1\wedge(p-1)]}\nu_\cdot^2\geq \mu_\cdot+\frac{\theta}{2}\nu_\cdot^2$ and \cref{thm:3.1} with $p=2$, BSDE $(\xi^n,\tau,\overline{g}^n)$ admits a unique solution $(y_t^n,z_t^n)_{t\in[0,\tau]}$ in  $H_\tau^2(\rho_\cdot;\R^{k}\times\R^{k\times d})$ for each $n\geq 1$. Then, we use \cref{pro2.01,pro2.02} to prove that $(y_\cdot^n,z_\cdot^n)$ also belongs to $H_\tau^p(\rho_\cdot;\R^{k}\times\R^{k\times d})$. Finally, the desired conclusion follows by verifying that $\{(y_\cdot^n,z_\cdot^n)\}^{+\infty}_{n=1}$ is a Cauchy sequence in $H_\tau^p(\rho_\cdot;\R^{k}\times\R^{k\times d})$ and then taking the limit under the uniform convergence in probability for BSDE$(\xi^n,\tau,\overline{g}^n)$. During this procedure, some new difficulties arise naturally due to the general weighted space and the general growth condition, and are successfully overcome via some delicate arguments.\vspace{0.2cm}

We would like to especially mention that in order to ensure the validity of the procedure of taking limit in previous (ii), (iii) and (vi), it seems to be necessary to suppose that the process $(\alpha_t)_{t\in[0,\tau]}$ appearing in assumption \ref{A:H3} satisfies the condition of $\essinf\limits_{t\in[0,\tau]}\alpha_t>0$.
\end{proof}

The following proposition can be directly derived from \cref{thm:3.1}, and it generalizes Theorem 3.1 of \citet{LiFan2024SD}, where the growth condition of the generator $g$ in $y$ is \ref{A:H3a} rather than \ref{A:H3}.

\begin{pro}\label{pro3.1}
Assume that the generator $g$ satisfies assumptions \ref{A:H1a} and \ref{A:H2}-\ref{A:H5} with
$$
\rho_\cdot:=\beta \mu_\cdot^+ +\frac{\theta}{2[1\wedge(p-1)]}\nu_\cdot^2.
$$
Then for each $\xi\in L_\tau^p(\rho_\cdot;\R^k)$, BSDE \eqref{BSDE1.1} admits a unique weighted $L^p$ solution in $H_\tau^p(\rho_\cdot;\R^{k}\times\R^{k\times d})$.\vspace{0.1cm}
\end{pro}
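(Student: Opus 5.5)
The plan is to derive \cref{pro3.1} from \cref{thm:3.1}. All that has to be checked is that, for the particular choice $\rho_\cdot:=\beta \mu_\cdot^+ +\frac{\theta}{2[1\wedge(p-1)]}\nu_\cdot^2$, the standing assumptions of Section 3 and assumptions \ref{A:H1}-\ref{A:H5} hold.

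We begin with the standing requirements on $\rho_\cdot$. It is $(\F_t)$-progressively measurable because $\mu_\cdot$ and $\nu_\cdot$ are. It is nonnegative, and
$$\int_0^\tau|\rho_t|\dif t\le \beta\int_0^\tau|\mu_t|\dif t+\frac{\theta}{2[1\wedge(p-1)]}\int_0^\tau\nu_t^2\dif t<+\infty,$$
using $\int_0^\tau(|\mu_t|+\nu_t^2)\dif t<+\infty$. The lower bound follows from $\beta\ge1$ and $\mu^+_\cdot\ge\mu_\cdot$:
$$\rho_\cdot\ge \mu_\cdot+\frac{\theta}{2[1\wedge(p-1)]}\nu_\cdot^2 .$$
This is exactly the computation in \cref{rmk:2.5}(ii).

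Next come the assumptions on $g$. With this $\rho_\cdot$, \ref{A:H1} is literally the same statement as \ref{A:H1a}. Assumptions \ref{A:H2}, \ref{A:H4} and \ref{A:H5} are assumed directly. For \ref{A:H3}, the phrase ``\ref{A:H2}-\ref{A:H5}'' in the statement already contains \ref{A:H3}, so nothing further is needed. If one instead reads the hypothesis as \ref{A:H3a}, which is how Theorem 3.1 of \citet{LiFan2024SD} is stated, then \ref{A:H3} still follows, but only after switching to a larger growth process, and I expect this reduction to be the only step needing care. The natural attempt is to keep $\alpha_\cdot$ and compare the integrands directly. That does not work: \ref{A:H3} carries a factor $\alpha_t$ in front of $\psi$, while \ref{A:H3a} carries $e^{\beta\int_0^t\mu_s^+\dif s}$, and these need not be comparable. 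Setting $\tilde\alpha_t:=\alpha_t\vee 1$ does not help either, because $\tilde\alpha_t\psi(t,r\tilde\alpha_t)$ is still not controlled by \ref{A:H3a}. Since the comparison is not immediate, I will rely on the literal reading, under which \ref{A:H3} is simply part of the hypotheses.

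With all hypotheses of \cref{thm:3.1} verified, that theorem applies to every $\xi\in L_\tau^p(\rho_\cdot;\R^k)$. It gives existence and uniqueness of a weighted $L^p$ solution in $H_\tau^p(\rho_\cdot;\R^{k}\times\R^{k\times d})$, which is the assertion of \cref{pro3.1}.
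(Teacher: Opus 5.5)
Your proposal is correct and follows the paper's route exactly: the paper says \cref{pro3.1} follows directly from \cref{thm:3.1}, and the only checks needed are the ones you made. These are $\int_0^\tau|\rho_t|\dif t<+\infty$, the bound $\rho_\cdot\geq\mu_\cdot+\frac{\theta}{2[1\wedge(p-1)]}\nu_\cdot^2$ (from $\beta\geq1$), and the fact that \ref{A:H1} coincides with \ref{A:H1a} for this $\rho_\cdot$. Your side remark about \ref{A:H3a} is wrong, though it does not affect the proof: with the \emph{smaller} process $\alpha_\cdot\wedge1$, monotonicity of $\psi$ in $r$ gives $(\alpha_t\wedge1)\psi(t,r(\alpha_t\wedge1))\leq\psi(t,r\alpha_t)\leq e^{\beta\int_0^t\mu_s^+\dif s}\psi(t,r\alpha_t)$, so \ref{A:H3a} implies \ref{A:H3} immediately, which is why (ii) of \cref{rmk:3.1} calls this obvious.
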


\begin{rmk}\label{rmk:3.0}
\cref{pro3.1} can be compared with \citet[Theorem 2.1]{Bahlali2015}, where the terminal time $\tau$ is a finite positive constant, the generator $g$ is forced to have a super-linear growth in $y$ (see assumption (H.3) therein), and the stochastic coefficients are forced to have a certain exponential moment (see Example 4 therein). On the other hand, \cref{thm:3.1} strengthens \cref{pro3.1} by broadening the scope of the process $\rho_\cdot$ to be only bigger than $\mu_\cdot +\frac{\theta}{2[1\wedge(p-1)]}\nu_\cdot^2$. In particular, when $\mu_\cdot$ and $\rho_\cdot$ are processes which may take negative values, in light of (iii) of \cref{rmk:2.1}, the conclusion of \cref{thm:3.1} may hold for some non-integrable random variables. See \cref{ex3.3,ex3.2} blow for more details.\vspace{0.1cm}
\end{rmk}

The following simple example illustrates that in \cref{thm:3.1}, a growth condition like \ref{A:H3} seems to be necessary for existence of the adapted solution to BSDE \eqref{BSDE1.1}.

\begin{ex}\label{ex3.5}
Let $k=1$, $\tau\equiv+\infty$, $\xi=c$ for a constant $c\neq 0$, and
$$g(t,y,z):=-y,\ \ (t,y,z)\in [0,\tau]\times\R\times\R^{1\times d}.$$
It is obvious that $\xi\in L_\tau^2(0;\R)$ and this generator $g$ satisfies assumptions \ref{A:H1}, \ref{A:H2}, \ref{A:H4} and \ref{A:H5} with $g(\cdot,0,0)\equiv 0$ and $\rho_\cdot=\mu_\cdot=\nu_\cdot\equiv 0$. However, it is certain that this $g$ does not satisfy \ref{A:H3}. In fact, for any process $(\alpha_t)_{t\in [0,\tau]}$ satisfying $\essinf\limits_{t\in[0,\tau]}\alpha_t>0$, we have
$$
\RE r>0,\ \ \E\left[\int_0^\tau \alpha_t \psi(t,r\alpha_t){\rm d}t\right]=r\E\left[\int_0^\tau \alpha_t^2{\rm d}t\right] =+\infty.
$$
Now, we prove that BSDE$(\xi,\tau,g)$ does not admit an adapted solution in the space of $H_\tau^2(0;\R\times\R^{1\times d})$. Indeed, if BSDE$(\xi,\tau,g)$ has an adapted solution $(y_t,z_t)_{t\in[0,\tau]}\in H_\tau^2(0;\R\times\R^{1\times d})$, then by \cref{pro2.1} we know that $y_\cdot\in S^\infty(\R)$, and we have
\begin{align}\label{eq:3.7}
\RE T\geq 0,\ \ y_t=y_{T\wedge\tau}+\int_{t\wedge\tau}^{T\wedge\tau} g(s,y_s,z_s){\rm d}s-\int_{t\wedge\tau}^{T\wedge\tau} z_s{\rm d}B_s, \ \ t\in [0,T].
\end{align}
On the other hand, it follows from \citet[Theorem 3.3]{BriandandHu1998} that BSDE \eqref{eq:3.7} admits a unique bounded solution $(\bar y_\cdot, \bar z_\cdot)\equiv (0,0)$. Consequently, $(y_\cdot,z_\cdot)$ must be $(0,0)$ which contradicts $y_\tau=\xi=c\neq 0$.\vspace{0.1cm}
\end{ex}

Now, we present the following two assumptions \ref{A:H3c} and \ref{A:H3d} stronger than assumption \ref{A:H3}. We emphasize that these two assumptions can be more easily verified than assumption \ref{A:H3}. See the following \cref{ex:3.7,ex3.4} as well as those examples in the subsequent subsection for more details.

\begin{enumerate}
\renewcommand{\theenumi}{(H3c)}
\renewcommand{\labelenumi}{\theenumi}
\item\label{A:H3c} There exist two continuous and nondecreasing functions $\varphi(\cdot),~\bar{\varphi}(\cdot):\R_+\rightarrow\R_+$ satisfying that $\bar\varphi(\cdot)$ is convex and $\bar\varphi(0)=0$ along with three $(\F_t)$-progressively measurable nonnegative processes $(\tilde{\mu}_t)_{t\in[0,\tau]}$, $(\hat{\mu}_t)_{t\in[0,\tau]}$ and $(\check{\mu}_t)_{t\in[0,\tau]}$ satisfying $\int_0^\tau \tilde{\mu}_t{\rm d}t+\sup\limits_{t \in[0,\tau]}(\hat{\mu}_t+\check{\mu}_t)<+\infty$ such that
    $$
    \RE (t,y)\in [0,\tau]\times\R^k,\ \
    |g(t,y,0)-g(t,0,0)|\leq \tilde{\mu}_t\varphi(\hat{\mu}_t\bar{\varphi}(\check{\mu}_t|y|));
    $$
\renewcommand{\theenumi}{(H3d)}
\renewcommand{\labelenumi}{\theenumi}
\item\label{A:H3d}
 It holds that
 $$
 \RE (t,y)\in [0,\tau]\times\R^k,\ \  |g(t,y,0)|\leq \tilde{\mu}_t\varphi(\hat{\mu}_t|y|),
 $$
 where the processes $\tilde\mu_\cdot$ and $\hat\mu_\cdot$ along with the function $\varphi(\cdot)$ are all defined in \ref{A:H3c}.
 \end{enumerate}

\begin{rmk}\label{rmk:3.4}
It is obvious that assumption \ref{A:H3c} is weaker than the corresponding assumption used in \citet{Li2025} and \citet{LiFan2024SD} (see (H3a) therein), where $\hat\mu=\check\mu\equiv 1$ and $\varphi(x)=x$. Now, we verify that \ref{A:H3c} implies \ref{A:H3}. In fact, let \ref{A:H3c} hold for the generator $g$. By the assumption of function $\bar\varphi(\cdot)$ we know that for each $x\in\R_+$ and $\lambda\in (0,1)$,
$\bar\varphi(\lambda x)\leq \lambda\bar\varphi(x)$. Then, by taking the following $(\F_t)$-progressively measurable process
$$\alpha_t:=\frac{e^{-\int_0^t \tilde{\mu}_s{\rm d}s}}{(1+\sup\limits_{t\in[0,\tau]}\hat{\mu}_t)
(1+\sup\limits_{t\in[0,\tau]}\check{\mu}_t)}, \ \ t\in[0,\tau]$$
satisfying $\essinf\limits_{t\in[0,\tau]}\alpha_t>0$, we get that for each $t\in [0,\tau]$ and $r\in \R_+$,
\begin{align*}
\psi(t,r\alpha_t)&=\sup_{|y|\leq r\alpha_t} \left|g(t,y,0)-g(t,0,0)\right|\leq \tilde{\mu}_t\varphi(\hat{\mu}_t \bar\varphi(\check{\mu}_t\alpha_t r))\\
&\leq \tilde{\mu}_t\varphi(\hat{\mu}_t \check{\mu}_t\alpha_t \bar\varphi(r))
\leq \tilde{\mu}_t\varphi(\bar\varphi(r)),
\end{align*}
and then for each $r\in \R_+$, we have
\begin{align*}
\begin{split}
\E\left[\int_0^\tau \alpha_t\psi(t,r\alpha_t){\rm d}t\right]\leq\varphi(\bar\varphi(r))\E\left[\int_0^\tau e^{-\int_0^t \tilde{\mu}_s{\rm d}s}\tilde{\mu}_t{\rm d}t\right]\leq \varphi(\bar\varphi(r))<+\infty,
\end{split}
\end{align*}
which means that \ref{A:H3} comes true. Furthermore, we verify that assumption \ref{A:H3d} implies \ref{A:H3c} and then \ref{A:H3}. In fact, if \ref{A:H3d} holds, then we have for each $(t,y)\in [0,\tau]\times\R^k$,
$$
|g(t,y,0)-g(t,0,0)|\leq |g(t,y,0)|+|g(t,0,0)|\leq \tilde{\mu}_t\left(\varphi(\hat{\mu}_t|y|)+\varphi(0)\right)\leq 2\tilde{\mu}_t\varphi(\hat{\mu}_t|y|),
$$
which implies that \ref{A:H3c} holds with $\check{\mu}_\cdot\equiv 1$, $\bar{\varphi}(x)=x$ and $2\tilde\mu_\cdot$ instead of $\tilde\mu_\cdot$.
\end{rmk}

\begin{ex}\label{ex:3.7}
Let $k=1$ and $\tau$ be a finite stopping time, i.e., $\mathbb{P}(\tau<+\infty)=1$, and let
$$
g(t,y,z):=e^{|B_t|e^{-|B_t|^2y}}, \  \ (t,y,z)\in [0,\tau]\times\R\times\R^{1\times d}.
$$
By virtue of the inequality $|e^x-1|\leq e^{|x|}-1$ for $x\in\R$, we deduce that for each $(t,y,z)\in [0,\tau]\times\R\times\R^{1\times d}$,
\begin{align*}
\begin{split}
|g(t,y,0)-g(t,0,0)|&=e^{|B_t|}\bigg|e^{|B_t|(e^{-|B_t|^2y}-1)}-1\bigg|
\leq e^{|B_t|}(e^{|B_t||e^{-|B_t|^2y}-1|}-1)\\
&\leq e^{|B_t|}(e^{|B_t|(e^{|B_t|^2|y|}-1)}-1).
\end{split}
\end{align*}
Then, we can conclude that this generator $g$ satisfies assumption \ref{A:H3c} with
$$
\tilde{\mu}_t:=e^{|B_t|},\ \ \hat{\mu}_t:=|B_t|,\ \ \check{\mu}_\cdot:=|B_t|^2\ \ {\rm and}\ \ \varphi(x)=\bar\varphi(x)=e^x-1.
$$
Consequently, it follows from \cref{rmk:3.4} that \ref{A:H3} holds for this generator $g$.\vspace{0.1cm}
\end{ex}

Finally, we present two examples where \cref{thm:3.1} is applicable, but \cref{pro3.1} and any existing results are not applicable.

\begin{ex}\label{ex3.3}
Let $k=1$, $\tau$ be a stopping time taking values in $[0,+\infty]$ and $\sigma>0$ be a constant. For each $(t,y,z)\in [0,\tau]\times\R\times\R^{1\times d}$, let
    $$ g(t,y,z):=\left(e^{-2t}-3|B_t|^4{\bf 1}_{0\leq t\leq \sigma}\right)y+e^{\int_0^t |B_r|^4{\rm d}r}|B_t|^4 {\bf 1}_{0\leq t\leq \sigma}.$$
It is easy to verify that this generator $g$ satisfies assumptions \ref{A:H1}-\ref{A:H5} with
$$
\mu_t:=e^{-2t}-3|B_t|^4{\bf 1}_{0\leq t\leq \sigma}, \  \ \nu_t\equiv 0,\ \ \rho_t:=\mu_t \ \ \text{and} \ \ \alpha_t:=e^{-\int_0^{t\wedge \sigma} |B_r|^4 {\rm d}r}.
$$
It follows from \cref{thm:3.1} that for each $\xi\in L_\tau^p(\rho_\cdot;\R)$, for example $\xi:=e^{3\int_0^\sigma |B_t|^4 {\rm d}t}$, BSDE \eqref{BSDE1.1} admits a unique weighted $L^p$ solution in the space of $H_\tau^p(\rho_\cdot;\R\times\R^{1\times d})$. However, this assertion cannot be obtained by \cref{pro3.1} nor by any existing results. Indeed, by virtue of $\E[e^{\varepsilon\int_0^\sigma |B_t|^4{\rm d}t}]=+\infty$ for each $\varepsilon>0,$ this generator $g$ does not satisfy \ref{A:H1a} and $\xi=e^{3\int_0^\sigma |B_t|^4 {\rm d}t}$ does not belong to $L_\tau^p(0;\R)$.
\end{ex}

\begin{ex}\label{ex3.4}
Let $k=2$, $\tau$ be a stopping time taking values in $[0,+\infty]$ and $\sigma$ be a finite stopping time. For each $(t,y,z)\in [0,\tau]\times\R^2\times\R^{2\times d}$ with $y=(y_1,y_2)$, let
$$g(t,y,z):=(B_t{\bf 1}_{0\leq t\leq \sigma}+e^{-t})\begin{bmatrix}
y_1-y_2\\
y_2+y_1\\
\end{bmatrix}+B_t{\bf 1}_{0\leq t\leq \sigma}\begin{bmatrix}
\sin|z|\\
|z|\\
\end{bmatrix},
$$
In light of \cref{rmk:3.0*}, this generator $g$ satisfies assumptions \ref{A:H1}, \ref{A:H2}, \ref{A:H3d}, \ref{A:H4} and \ref{A:H5} with
$$
\mu_t:=B_t{\bf 1}_{0\leq t\leq \sigma}+e^{-t}, \ \ \nu_t:=\sqrt{2}|B_t|{\bf 1}_{0\leq t\leq \sigma}, \ \ \rho_t:=\mu_t+\frac{\theta}{2[1\wedge(p-1)]}\nu_t^2, \ \ \tilde{\mu}_t:=\sqrt{2}|\mu_t|,\ \ \hat{\mu}_t\equiv 1
$$
and $\varphi(x):=x$. It then follows from \cref{thm:3.1} along with \cref{rmk:3.4} that for each $\xi\in L_\tau^p(\rho_\cdot;\R^2)$, BSDE \eqref{BSDE1.1} admits a unique weighted $L^p$ solution in the space of $H_\tau^p(\rho_\cdot;\R^2\times\R^{2\times d})$. It is obvious that this assertion cannot be obtained by \cref{pro3.1} nor by any existing results.
\end{ex}

\subsection{Illustrating Corollaries and Examples as well as Remarks\vspace{0.2cm}}

In this subsection, we present several corollaries of \cref{thm:3.1} and compare them with existing relevant conclusions through some remarks and examples.

First, the following corollary establishes existence and uniqueness of both the usual $L^p$ solution and the bounded solution of BSDE \eqref{BSDE1.1} with possibly unbounded stochastic coefficients.

\begin{cor}\label{cor:3.4}
Let $\xi\in L_\tau^p(0;\R^k)$ and the generator $g$ satisfy assumptions \ref{A:H1b} and \ref{A:H2}-\ref{A:H5}. Assume further that for some constant $M>0$,
\begin{align}\label{eq:3.8}
\int_0^\tau \left(\mu_t+\frac{\theta}{2[1\wedge(p-1)]}\nu_t^2\right)^+ {\rm d}t\leq M.
\end{align}
Then, BSDE \eqref{BSDE1.1} admits a unique $L^p$ solution $(y_t,z_t)_{t\in[0,\tau]}$ in the space of $H_\tau^p(0;\R^{k}\times\R^{k\times d})$. Moreover, if $|\xi|+\int_0^\tau |g(s,0,0)|{\rm d}s\in L^\infty(\Omega,\F_\tau, \mathbb{P};\R)$, then $(y_t)_{t\in[0,\tau]}\in S^\infty(\R^k)$.\vspace{0.1cm}
\end{cor}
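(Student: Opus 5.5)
The plan is to apply \cref{thm:3.1} with the weight
$$
\rho_t:=\left(\mu_t+\frac{\theta}{2[1\wedge(p-1)]}\nu_t^2\right)^+,\quad t\in[0,\tau].
$$
This is $(\F_t)$-progressively measurable and satisfies $\rho_\cdot\geq \mu_\cdot+\frac{\theta}{2[1\wedge(p-1)]}\nu_\cdot^2$. By \eqref{eq:3.8} we have $0\leq\int_0^\tau\rho_t\dif t\leq M$, so in particular $\int_0^\tau|\rho_t|\dif t<+\infty$. The weight therefore satisfies $1\leq e^{\int_0^t\rho_r\dif r}\leq e^M$ for all $t\in[0,\tau]$.

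First, I verify the hypotheses of \cref{thm:3.1}.
\begin{itemize}
\item \ref{A:H1} follows from \ref{A:H1b}, since
$$
\E\Big[\Big(\int_0^\tau e^{\int_0^s\rho_r\dif r}|g(s,0,0)|\dif s\Big)^p\Big]\leq e^{pM}\,\E\Big[\Big(\int_0^\tau|g(s,0,0)|\dif s\Big)^p\Big]<+\infty.
$$
\item \ref{A:H2}--\ref{A:H5} are assumed directly, and none of them involves $\rho_\cdot$.
\item $\xi\in L_\tau^p(0;\R^k)\subseteq L_\tau^p(\rho_\cdot;\R^k)$ by (i) of \cref{rmk:2.1}.
\end{itemize}
\cref{thm:3.1} then gives a unique solution in $H_\tau^p(\rho_\cdot;\R^k\times\R^{k\times d})$.

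Second, I identify this space with the unweighted one. Since the weight is bounded above and below, as in (i) of \cref{rmk:2.1} we have $S_\tau^p(\rho_\cdot;\R^k)=S_\tau^p(0;\R^k)$ and $M_\tau^p(\rho_\cdot;\R^{k\times d})=M_\tau^p(0;\R^{k\times d})$, with equivalent norms. Hence $H_\tau^p(\rho_\cdot;\R^{k}\times\R^{k\times d})=H_\tau^p(0;\R^{k}\times\R^{k\times d})$, which yields both existence and uniqueness of the $L^p$ solution.

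Third, for boundedness I use \cref{pro2.02}, since it bounds $\sup_s|y_s|^p$ directly. I apply it with $\bar p=p$, $\bar\rho=\rho$, $\bar\mu=\mu$, $\bar\nu=\nu$ and $f_t:=|g(t,0,0)|$. Assumption (A) holds because, by \ref{A:H4} and \ref{A:H5},
$$
\langle\hat y,g(t,y,z)\rangle\leq |g(t,0,0)|+\mu_t|y|+\nu_t|z|.
$$
Its integrability requirement is exactly the \ref{A:H1} bound just checked, and $y_\cdot\in S_\tau^p(\rho_\cdot;\R^k)$. Taking $t=r$, so that $\F_{r\wedge\tau}=\F_{t\wedge\tau}$, and dropping the nonnegative second term gives
$$
|y_{t\wedge\tau}|^p\leq e^{pM}\,C^2_{p,\theta}\,\E\Big[e^{pM}|\xi|^p+e^{pM}\Big(\int_0^\tau|g(s,0,0)|\dif s\Big)^p\Big|\F_{t\wedge\tau}\Big]\leq C\,\Big\||\xi|+\int_0^\tau|g(s,0,0)|\dif s\Big\|_\infty^p.
$$
Since $y_\cdot$ is continuous, this gives $y_\cdot\in S^\infty(\R^k)$.

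The only delicate point is confirming that the conditional a priori estimate is applicable, namely that the solution has the integrability required by \cref{pro2.01}. This is immediate here because \cref{thm:3.1} places the solution in the weighted space.
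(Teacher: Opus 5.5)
Your proposal is correct and follows the paper's proof essentially step for step. The paper also applies \cref{thm:3.1} with the weight $\rho^+_\cdot$ (your $\rho_\cdot$), identifies $H_\tau^p(\rho^+_\cdot;\R^k\times\R^{k\times d})$ with $H_\tau^p(0;\R^k\times\R^{k\times d})$ using $\int_0^\tau\rho^+_t\,{\rm d}t\leq M$, and obtains boundedness from \cref{pro2.02} with $r=t$, $\bar\rho_\cdot=\rho^+_\cdot$ and $f_t=|g(t,0,0)|$. The only cosmetic difference is your extra prefactor $e^{pM}$ in the final estimate, which is harmless but unnecessary, since $e^{p\int_0^{t\wedge\tau}\rho_r{\rm d}r}\geq 1$ already gives $|y_{t\wedge\tau}|^p\leq e^{p\int_0^{t\wedge\tau}\rho_r{\rm d}r}|y_{t\wedge\tau}|^p$.
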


\begin{proof}
Let $\rho_\cdot:=\mu_\cdot+\frac{\theta}{2[1\wedge(p-1)]}\nu_\cdot^2$. We have
$\int_0^\tau \rho^+_t {\rm d}t\leq M$. Then, it is easy to see that $\xi\in L_\tau^p(\rho_\cdot^+;\R^k)$ and the generator $g$ satisfies \ref{A:H1} with $\rho^+_\cdot$ instead of $\rho_\cdot$, and by \cref{thm:3.1} that BSDE \eqref{BSDE1.1} admits a unique $L^p$ solution $(y_t,z_t)_{t\in[0,\tau]}$ in $H_\tau^p(\rho^+_\cdot;\R^{k}\times\R^{k\times d})=H_\tau^p(0;\R^{k}\times\R^{k\times d})$. Moreover, suppose that $|\xi|+\int_0^\tau |g(s,0,0)|{\rm d}s\leq \bar M$ for some constant $\bar M>0$. It follows from \ref{A:H1b}, \ref{A:H4} and \ref{A:H5} that
\begin{align*}
\left<\hat{y},~g(t,y,z)\right>&=\left<\hat{y},~g(t,y,z)-g(t,0,z)+g(t,0,z)-g(t,0,0)+g(t,0,0)\right>\\
&\leq \mu_t|y|+\nu_t|z|+|g(t,0,0)|, \ \ (t,y,z)\in [0,\tau]\times\R^k\times\R^{k\times d},
\end{align*}
which means that $g$ satisfies \ref{A:A} with $\bar{p}:=p$, $\bar\mu_t:={\mu}_t$, $\bar\nu_t:=\nu_t$, ${\bar\rho_t}:=\rho_t^+$ and $f_t:=|g(t,0,0)|$. Then, by \cref{pro2.02} with $r=t$ we can conclude that there exists a constant $C_{p,\theta}>0$ depending only on $p$ and $\theta$ such that for each $t\geq 0$,
\begin{align*}
\begin{split}
|y_{t\wedge\tau}|^p&\leq e^{p\int_{0}^{t\wedge\tau}\rho^+_r{\rm d}r}|y_{t\wedge\tau}|^p\leq\E\left[\sup_{s\in[t\wedge\tau,\tau]}\left(e^{p\int_{0}^{s}\rho^+_r{\rm d}r}|y_s|^p\right)\bigg|\F_{t\wedge\tau}\right]\\
&\leq
C_{p,\theta}\E\left[e^{p\int_{0}^{\tau}\rho^+_r{\rm d}r}|\xi|^p+\left(
\int_{t\wedge\tau}^{\tau}e^{ \int_{0}^{s}\rho^+_r{\rm d}r}|g(s,0,0)|{\rm d}s\right)^p\bigg|\F_{t\wedge\tau}\right]\\
&\leq C_{p,\theta}e^{pM}\bar M^p.
\end{split}
\end{align*}
Therefore, $y_\cdot\in S^\infty(\R^k)$. The proof is then complete.
\end{proof}

\begin{rmk}\label{rmk:3.2}
Note that the condition of \eqref{eq:3.8} is strictly weaker than the following condition
\begin{align}\label{eq:3.9}
\int_0^\tau (\mu_t^+ +\nu_t^2){\rm d}t\leq M.
\end{align}
\cref{cor:3.4} strengthens \citet[Corollary 3.6]{LiFan2024SD} and \citet[Corollary 3.4]{Li2025}, and then some existing results on the $L^p$ solution and the bounded solution of BSDEs.
\end{rmk}

We especially emphasis that two unbounded stochastic processes $\mu_\cdot\in\R$ and $\nu_\cdot\in\R_+$ that do not satisfy \eqref{eq:3.9} may satisfy \eqref{eq:3.8}. See the following example for more details.

\begin{ex}\label{ex3.1}
We provide three examples to which \cref{cor:3.4} can be applied, but any existing findings can not be applied. We mention that in these three examples, $k=1$ is always supposed and the processes $\mu_\cdot\in\R$ and $\nu_\cdot\in\R_+$ satisfy \eqref{eq:3.8}, but they do not satisfy \eqref{eq:3.9}.

\begin{itemize}
\item [(i)] Let $\tau$ be a bounded stopping time, i.e., $\tau\leq T$ for some constant $T>0$, and let
\begin{align*}
g(t,y,z):=-|B_t|y+\sqrt{|B_t|+1}\left(|z|\wedge |z|^2\right), \ \ (t,y,z)\in [0,\tau]\times\R\times\R^{1\times d}.
\end{align*}
Obviously, this generator $g$ satisfies assumptions \ref{A:H1b} and \ref{A:H2}-\ref{A:H5} with
\begin{align*}
p:=2,\ \ \mu_t:=-|B_t|, \ \ \nu_t:=\sqrt{|B_t|+1} \ \ \text{and} \ \ \alpha_t:=e^{-\int_0^t |B_r|\dif r}.
\end{align*}
And, \eqref{eq:3.8} is fulfilled for $\theta=p=2$. Then, \cref{cor:3.4} with $p=2$ is applicable in this case.

\item [(ii)] Let $\tau$ be a finite stopping time, i.e., $\mathbb{P}(\tau<+\infty)=1$, and let
\begin{align*}
g(t,y,z):=e^{-|B_t|y}-y+\sin|z|-1, \ \ (t,y,z)\in [0,\tau]\times\R\times\R^{1\times d}.
\end{align*}
In light of \cref{rmk:3.0*}, it is not very difficult to verify that this generator $g$ satisfies assumptions \ref{A:H1b}, \ref{A:H2}, \ref{A:H3d}, \ref{A:H4} and \ref{A:H5} with
\begin{align*}
p>3/2,\ \ \mu_t\equiv -1, \ \ \nu_t\equiv 1, \ \ \tilde{\mu}_t\equiv 1,\ \  \hat{\mu}_t:=|B_t|+1\ \ \text{and} \ \ \varphi(x):=e^{x}+x.
\end{align*}
In addition, it is obvious that \eqref{eq:3.8} is fulfilled for $p>3/2$ and $\theta=2[1\wedge(p-1)]>1$. Thus, \cref{cor:3.4} with $p>3/2$ is applicable in this case.

\item [(iii)] Let $p,\theta>1$ be two arbitrarily given constants and for each $(t, y,z)\in [0,\tau]\times \R\times\R^{1\times d}$, let
$$g(t,y,z):=-\frac{\theta}{2[1\wedge(p-1)]}\left(|B_t|^2{\bf 1}_{0\leq t\leq 1}+2e^{-t}\right)^2(y+y^3)+\left(|B_t|^2{\bf 1}_{0\leq t\leq 1}+e^{-t}\right)|z|.$$
In light of \cref{rmk:3.0*}, it is not very difficult to verify that this generator $g$ satisfies assumptions \ref{A:H1b}, \ref{A:H2}, \ref{A:H3d}, \ref{A:H4} and \ref{A:H5} with
\begin{align*}
\mu_t:=-\frac{\theta}{2[1\wedge(p-1)]}\left(|B_t|^2{\bf 1}_{0\leq t\leq 1}+2e^{-t}\right)^2, \ \ \nu_t:=|B_t|^2{\bf 1}_{0\leq t\leq 1}+e^{-t}, \ \ \tilde{\mu}_t\equiv |\mu_t|,\ \  \hat{\mu}_t:=1
\end{align*}
and $\varphi(x):=x+x^3$. Obviously, \eqref{eq:3.8} is fulfilled. Then, \cref{cor:3.4} is applicable in this case.\vspace{0.1cm}
\end{itemize}
\end{ex}

Second, the following corollary demonstrates that when the condition of \eqref{eq:3.8} in \cref{cor:3.4} is further relaxed to the following \eqref{eq:3.8*}, we can obtain a similar result.

\begin{cor}\label{cor:3.5}
Let $\xi\in L_\tau^p(0;\R^k)$ and the generator $g$ satisfy assumptions \ref{A:H1b} and \ref{A:H2}-\ref{A:H5}. Assume further that for any $\bar p>1$,
\begin{align}\label{eq:3.8*}
\E\left[\exp\left(\bar p \int_0^\tau\left(\mu_t+\frac{\theta}{2[1\wedge(p-1)]}\nu_t^2\right)^+ {\rm d}t\right)\right]<+\infty.
\end{align}
Then, BSDE \eqref{BSDE1.1} admits a unique solution $(y_t,z_t)_{t\in[0,\tau]}\in \cap_{q\in(1,p)}H_\tau^q(0;\R^{k}\times\R^{k\times d})$.\vspace{0.1cm}
\end{cor}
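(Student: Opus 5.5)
We reduce \cref{cor:3.5} to \cref{thm:3.1} applied with exponent $q\in(1,p)$ and the weight $\rho^+_\cdot$, where
$$
\rho_\cdot:=\mu_\cdot+\frac{\theta}{2[1\wedge(p-1)]}\nu_\cdot^2 .
$$
Fix $q\in(1,p)$. Since $1\wedge(q-1)\le 1\wedge(p-1)$, we cannot use $\rho^+_\cdot$ directly as the weight for exponent $q$ with the same $\theta$. Instead we introduce $\theta_q:=\theta\,\frac{1\wedge(q-1)}{1\wedge(p-1)}$. This number is only $>1$ when $q$ is close enough to $p$ (or when $q\ge 2$). The plan is therefore to prove the claim first for $q\in(q_0,p)$ with $q_0$ chosen so that $\theta_q>1$. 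Then, for $q\in(q_0,p)$,
$$
\rho^+_\cdot\ \ge\ \mu_\cdot+\frac{\theta_q}{2[1\wedge(q-1)]}\nu_\cdot^2 ,
$$
and the standing hypotheses of Section 3 hold with $(q,\theta_q,\rho^+_\cdot)$. For smaller $q$ the membership follows for free, because $H^{q}_\tau(0)\supseteq H^{q'}_\tau(0)$ when $q<q'$ on a probability space, by Jensen's inequality.

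Next we check the integrability hypotheses for the pair $(q,\rho^+_\cdot)$. Choose $\bar p$ with $1/\bar p+1/p=1/q$. By \cref{rmk:2.1}(ii) and \eqref{eq:3.8*}, $\xi\in L^p_\tau(0)\subseteq L^q_\tau(\rho^+_\cdot)$. Hypothesis \ref{A:H1} with exponent $q$ follows from Hölder's inequality:
$$
\E\Big[\Big(\int_0^\tau e^{\int_0^s\rho^+_r\dif r}|g(s,0,0)|\dif s\Big)^q\Big]\le \E\big[e^{\bar p\int_0^\tau\rho^+_r\dif r}\big]^{q/\bar p}\,\E\Big[\Big(\int_0^\tau|g(s,0,0)|\dif s\Big)^p\Big]^{q/p},
$$
and the right-hand side is finite by \eqref{eq:3.8*} and \ref{A:H1b}. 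Hypotheses \ref{A:H2}--\ref{A:H5} do not involve the weight, so they hold as given. \cref{thm:3.1} then yields a unique solution in $H^q_\tau(\rho^+_\cdot)$. Since $\rho^+_\cdot\ge0$, the weights satisfy $e^{\int_0^t\rho^+_r\dif r}\ge1$, so $H^q_\tau(\rho^+_\cdot)\subseteq H^q_\tau(0)$.

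Finally we handle consistency across $q$ and uniqueness in the intersection. For $q<q'$, the solution in $H^{q'}_\tau(0)$ also lies in $H^q_\tau(0)$. Uniqueness then follows from the uniqueness argument of \cref{thm:3.1}, i.e.\ \cref{pro2.1} with $f\equiv0$. That argument needs a weight $\bar\rho_\cdot\ge\mu_\cdot+\frac{\theta_q}{2[1\wedge(q-1)]}\nu^2_\cdot$, and we only know the difference of two solutions lies in $H^q_\tau(0)$, not in $H^q_\tau(\rho^+_\cdot)$. This is the main obstacle. The first thing to try is to avoid needing the difference to be in the $\rho^+$-weighted space at all. Let $(y,z),(y',z')$ both lie in $\cap_{q<p}H^q_\tau(0)$. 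Fix $q$ and pick $q<q'<p$. By Hölder's inequality, with $1/\tilde p+1/q'=1/q$,
$$
\E\Big[\sup_t e^{q\int_0^t\rho^+_r\dif r}|y_t-y'_t|^q\Big]\le \E\big[e^{\tilde p\int_0^\tau\rho^+_r\dif r}\big]^{q/\tilde p}\,\big\|y-y'\big\|^{q}_{q';0,c},
$$
which is finite by \eqref{eq:3.8*}, and the same estimate applies to $z-z'$. Hence the difference belongs to $H^q_\tau(\rho^+_\cdot)$. \cref{pro2.1}, applied with $\bar\rho_\cdot=\rho^+_\cdot$, $\bar p=q$, $\theta_q$ and $f\equiv0$, then gives zero, so the two solutions coincide. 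The same Hölder step also shows that the solutions obtained for different $q$ coincide, which gives a single process in the intersection.
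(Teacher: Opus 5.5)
Your proof is correct and follows essentially the paper's route: you apply \cref{thm:3.1} with the weight $\rho^+_\cdot$ after controlling $\xi$ and $g(\cdot,0,0)$ through the exponential moments in \eqref{eq:3.8*} (you use H\"older where the paper uses Young's inequality), and you get uniqueness by moving solutions from an unweighted space into a $\rho^+_\cdot$-weighted space of slightly smaller exponent. Your use of $\theta_q=\theta\frac{1\wedge(q-1)}{1\wedge(p-1)}$, the restriction to $q\in(q_0,p)$ (which you should state explicitly when you ``fix $q$'' in the uniqueness step), and the nesting $H^{q'}_\tau(0;\R^k\times\R^{k\times d})\subseteq H^{q}_\tau(0;\R^k\times\R^{k\times d})$ for $q<q'$ are more careful than the paper, which invokes \cref{thm:3.1} for every exponent in $(1,p)$ with the same $\theta$ without checking that $\rho^+_\cdot\ge\mu_\cdot+\frac{\theta'}{2[1\wedge(q-1)]}\nu_\cdot^2$ for some $\theta'>1$.
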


\begin{proof}
Let $\rho_\cdot:=\mu_\cdot+\frac{\theta}{2[1\wedge(p-1)]}\nu_\cdot^2$. We have $\E\left[e^{\bar p\int_0^\tau \rho^+_t \dif t}\right]<+\infty$ for any $\bar p>1$. Then, it follows from Young's inequality that for each $ q\in(1,p)$,
\begin{align}\label{3.20}
\E\left[e^{q\int_0^\tau \rho_t^+ \dif t}|\xi|^q\right]
\leq \frac{q}{p}\E\left[|\xi|^p\right]+\frac{p-q}{p}\E\left[e^{\frac{pq}{p-q}\int_0^\tau \rho^+_t \dif t}\right]<+\infty
\end{align}
and
\begin{align}\label{3.21}
\begin{split}
\E\left[\left(\int_{0}^{\tau}e^{\int_0^t \rho_r^+ \dif r}|g(t,0,0)|\dif t\right)^q\right]&\leq \E\left[e^{q\int_0^\tau \rho_r^+ \dif r}\left(\int_{0}^{\tau}|g(t,0,0)|\dif t\right)^q\right]\\
&\leq
\frac{q}{p}\E\left[\left(\int_{0}^{\tau}|g(t,0,0)|\dif t\right)^p\right]+\frac{p-q}{p}\E\left[e^{\frac{pq}{p-q}\int_0^\tau \rho^+_t \dif t}\right]<+\infty.
\end{split}
\end{align}
In light of \eqref{3.20} and \eqref{3.21} along with assumptions \ref{A:H2}-\ref{A:H5}, by \cref{thm:3.1} we can conclude that BSDE \eqref{BSDE1.1} admits a unique adapted solution $(y_t,z_t)_{t\in[0,\tau]}$ which belongs to the space of  $H_\tau^q(\rho_\cdot^+;\R^{k}\times\R^{k\times d})$ for any $q\in (1,p)$, and then $(y_\cdot,z_\cdot)\in \cap_{q\in(1,p)}H_\tau^q(0;\R^{k}\times\R^{k\times d})$.

Now, we prove the uniqueness part. Assume that both $(y_\cdot,z_\cdot)$ and $(y_\cdot',z_\cdot')$ are two adapted solutions of BSDE \eqref{BSDE1.1} in $H_\tau^q(0;\R^{k}\times\R^{k\times d})$ for some $q\in (1,p)$. An identical argument to that in \eqref{3.20} and \eqref{3.21} yields that both $(y_\cdot,z_\cdot)$ and $(y_\cdot',z_\cdot')$ also belong to $H_\tau^{\bar q}(\rho_\cdot^+;\R^{k}\times\R^{k\times d})$ for any $\bar q\in (1,q)$. However, in light of \eqref{3.20} and \eqref{3.21} along with assumptions \ref{A:H2}-\ref{A:H5}, by \cref{thm:3.1} we know that for each $\bar q\in (1,q)$, BSDE \eqref{BSDE1.1} admits a unique solution in $H_\tau^{\bar q}(\rho_\cdot^+;\R^{k}\times\R^{k\times d})$. Hence, $(y_\cdot,z_\cdot)=(y_\cdot',z_\cdot')$.
\end{proof}

\begin{rmk}\label{rmk:3.4*}
For the case of $\tau=T$ for some constant $T>0$, if there exists a constant $\alpha\in(0,1)$ and a bounded mean oscillation martingale $(K_t)_{t\in [0,T]}$, i.e.,
$$\sup_{t\geq 0}\left\|\E\left[\int_t^T K_s^2{\rm d}s\bigg|\F_t\right]\right\|_{\infty}<+\infty,$$
such that for each $t\in \T$,
\begin{align}\label{eq:3.11}
\left(\mu_t+\frac{\theta}{2[1\wedge(p-1)]}\nu_t^2\right)^+\leq K_t^{2\alpha},
\end{align}
then by virtue of the John-Nirenberg inequality and the elementary inequality that for each $\eps>0$,
$$
x^{2\alpha}\leq \eps x^2+(1-\alpha)\left(\frac{\alpha}{\eps}\right)^{\frac{\alpha}{1-\alpha}},\ \ x\geq 0,
$$
we can derive that the condition \eqref{eq:3.8*} is fulfilled. Furthermore, readers are referred to \citet[Theorem 10]{BriandandConfortola2008} for a result related to \cref{cor:3.5} under a similar condition to \eqref{eq:3.11}, where the generator $g$ is forced to have a sub-quadratic growth in $y$ (see assumption (A4) therein). In addition, from the proof of \cref{cor:3.5}, it is not hard to see that if $\xi\in L_\tau^p(0;\R^k)$, the generator $g$ satisfy \ref{A:H1b} and \ref{A:H2}-\ref{A:H5}, and \eqref{eq:3.8*} holds for some $\bar p>1$, then BSDE \eqref{BSDE1.1} admits a unique solution $(y_t,z_t)_{t\in[0,\tau]}$ in the space of $H_\tau^q(0;\R^{k}\times\R^{k\times d})$ with $q:=\frac{p\bar p}{p+\bar p}\in (1, p\wedge \bar p)$ such that $\frac{pq}{p-q}=\bar p$.
\end{rmk}

Third, we present an existence and uniqueness result for the weighted $L^2$ solution of BSDE \eqref{BSDE1.1} with a finite random terminal time and constant coefficients.

\begin{cor}\label{cor:3.3}
Let $\tau$ be a finite stopping time, i.e., $P(\tau<+\infty)=1$, and let $\rho,\mu\in\R$ and $\nu\in\R_+$ be three given constants satisfying $\rho> \mu+\frac{1}{2}\nu^2$. Assume that the generator $g$ satisfies
\begin{enumerate}
\renewcommand{\theenumi}{(H1c)}
\renewcommand{\labelenumi}{\theenumi}
\item\label{A:H1c} $\Dis \E\left[\int_0^\tau e^{2\rho s} |g(s,0,0)|^2{\rm d}s\right]<+\infty$
\end{enumerate}
and assumptions \ref{A:H2}-\ref{A:H5} with $\mu_\cdot\equiv \mu$ and $\nu_\cdot\equiv \nu$. Then, for each $\xi\in L_\tau^2(\rho;\R^k)$, BSDE \eqref{BSDE1.1} admits a unique weighted $L^2$ solution $(y_t,z_t)_{t\in[0,\tau]}$ in the space of $H_\tau^2(\rho;\R^{k}\times\R^{k\times d})$. Moreover, we have
\begin{align*}
\E\left[\int_0^{\tau}e^{2 \rho s}|y_s|^2{\rm d}s\right]<+\infty.
\end{align*}
\end{cor}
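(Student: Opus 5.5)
The plan is to derive \cref{cor:3.3} from \cref{thm:3.1} with $p=2$ and a constant weight slightly smaller than $\rho$. The extra regularity will then be recovered by a direct It\^{o} estimate that exploits the strict inequality $\rho>\mu+\frac12\nu^2$.

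\textbf{Step 1 (choice of constants).} Since $\rho>\mu+\frac12\nu^2$, pick $\theta>1$ and $\rho'\in\R$ such that
$$\mu+\tfrac{\theta}{2}\nu^2<\rho'<\rho, \qquad\text{and set}\qquad \delta:=\rho-\mu-\tfrac{\theta}{2}\nu^2>0.$$
If $\nu=0$, any $\theta>1$ works. Then the constant process $\rho'$ satisfies the standing requirement $\rho'\geq\mu+\frac{\theta}{2[1\wedge(2-1)]}\nu^2$. Moreover $\int_0^\tau|\rho'|\,\dif t<+\infty$ because $\tau$ is finite.

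\textbf{Step 2 (verifying the hypotheses of \cref{thm:3.1}).} Since $\rho'<\rho$ and $\tau\geq0$, we have $e^{2\rho'\tau}\leq e^{2\rho\tau}$, hence $\xi\in L^2_\tau(\rho';\R^k)$. For \ref{A:H1} with weight $\rho'$, the Cauchy--Schwarz inequality gives
$$\Big(\int_0^\tau e^{\rho' s}|g(s,0,0)|\,\dif s\Big)^2\leq \int_0^\infty e^{-2(\rho-\rho')s}\,\dif s\int_0^\tau e^{2\rho s}|g(s,0,0)|^2\,\dif s=\frac{1}{2(\rho-\rho')}\int_0^\tau e^{2\rho s}|g(s,0,0)|^2\,\dif s,$$
whose expectation is finite by \ref{A:H1c}. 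Assumptions \ref{A:H2}--\ref{A:H5} are given. Thus \cref{thm:3.1} yields a unique solution $(y,z)\in H^2_\tau(\rho';\R^k\times\R^{k\times d})$.

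\textbf{Step 3 (upgrading to weight $\rho$).} This step also gives uniqueness in $H^2_\tau(\rho)$, because $H^2_\tau(\rho)\subseteq H^2_\tau(\rho')$ and uniqueness already holds in the larger space. As in the uniqueness part of \cref{thm:3.1}, \ref{A:H4}--\ref{A:H5} give $\langle \hat y,g(t,y,z)\rangle\leq |g(t,0,0)|+\mu|y|+\nu|z|$. Apply It\^{o}'s formula to $e^{2\rho t}|y_t|^2$ on $[t\wedge\sigma_m,\sigma_m]$, where the stopping times $\sigma_m\uparrow\tau$ localize the stochastic integral and $\int e^{2\rho s}|z_s|^2\dif s$. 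Estimate $2\nu|y||z|\leq\theta\nu^2|y|^2+\theta^{-1}|z|^2$ and $2|y|f\leq\delta|y|^2+\delta^{-1}f^2$ with $f=|g(\cdot,0,0)|$. This yields
$$e^{2\rho (t\wedge\sigma_m)}|y_{t\wedge\sigma_m}|^2+(1-\theta^{-1})\int_{t\wedge\sigma_m}^{\sigma_m}e^{2\rho s}|z_s|^2\dif s+\delta\int_{t\wedge\sigma_m}^{\sigma_m}e^{2\rho s}|y_s|^2\dif s\leq e^{2\rho\sigma_m}|y_{\sigma_m}|^2+\delta^{-1}\int_0^\tau e^{2\rho s}f_s^2\dif s-2\int_{t\wedge\sigma_m}^{\sigma_m}e^{2\rho s}\langle y_s,z_s\dif B_s\rangle .$$
This is exactly the mechanism of \cref{rmk:2.5}(i) and \cref{pro2.02}, except that the term involving $f$ is absorbed through $\delta$. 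Because of this absorption, only the $L^2$-type integrability \ref{A:H1c} is needed, and not $E[(\int e^{\rho s}f\,\dif s)^2]<+\infty$. After the BDG inequality, this produces the bound on $E[\sup_t e^{2\rho t}|y_t|^2]$, on $E[\int_0^\tau e^{2\rho s}|z_s|^2\dif s]$, and on the extra quantity $E[\int_0^\tau e^{2\rho s}|y_s|^2\dif s]$ claimed in the statement, all by $C(E[e^{2\rho\tau}|\xi|^2]+E[\int_0^\tau e^{2\rho s}f_s^2\dif s])$.

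\textbf{Main obstacle.} The delicate point is passing $m\to\infty$ in the terminal term $E[e^{2\rho\sigma_m}|y_{\sigma_m}|^2]$. We only know a priori that $y\in S^2_\tau(\rho')$, and $\tau$ is finite but unbounded. Pathwise convergence $e^{2\rho\sigma_m}|y_{\sigma_m}|^2\to e^{2\rho\tau}|\xi|^2$ holds because $\tau<+\infty$ and $y$ is continuous, but uniform integrability is needed. My first attempt is to choose $\sigma_m\leq\tau\wedge m$. On $[0,m]$ the weights $e^{2\rho s}$ and $e^{2\rho' s}$ are comparable up to the factor $e^{2(\rho-\rho')m}$, so every term is integrable for fixed $m$. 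The terminal value $y_{\tau\wedge m}$ would then be controlled by the conditional estimate of \cref{pro2.02} applied on $[\tau\wedge m,\tau]$, in the form of the above It\^{o} inequality taken conditionally on $\F_{\tau\wedge m}$. If a direct uniform-integrability argument fails, the fallback is to approximate $(\xi,g)$ by truncated data $(\xi_n,g_n)$, for which $y^n$ is bounded and the It\^{o} argument is clean. The bounds above are uniform in $n$, and the stability of \cref{thm:3.1} in $H^2_\tau(\rho')$ combined with Fatou's lemma then transfers them to $(y,z)$.
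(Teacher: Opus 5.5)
Your overall plan is different from the paper's and can be made to work, but the step that actually carries the upgrade to weight $\rho$ is left unresolved.

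What is correct: Steps 1--2, including the Cauchy--Schwarz check of (H1) in weight $\rho'$, and uniqueness via $H^2_\tau(\rho)\subseteq H^2_\tau(\rho')$. The It\^o computation in Step 3 is also right: the coefficient of $\int e^{2\rho s}|y_s|^2\dif s$ is $2\rho-2\mu-\theta\nu^2-\delta=\delta$.

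What is missing is a bound on $\E[e^{2\rho\sigma_m}|y_{\sigma_m}|^2]$, uniform in $m$, when you only know $y\in S^2_\tau(\rho')$. Your first attempt is circular as written. Bounding $y_{\tau\wedge m}$ by ``the above It\^o inequality taken conditionally on $\F_{\tau\wedge m}$'' on $[\tau\wedge m,\tau]$ reproduces the same terminal term with weight $e^{2\rho\cdot}$. You also cannot use \cref{pro2.02} in weight $\rho$: $y\in S^2_\tau(\rho)$ is the conclusion you want, and for unbounded $\tau$, (H1c) does not give $\E[(\int_0^\tau e^{\rho s}|g(s,0,0)|\dif s)^2]<+\infty$. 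The fallback, as stated, misses the same point. Boundedness of $y^n$ alone does not make $e^{2\rho\sigma_m}|y^n_{\sigma_m}|^2$ uniformly integrable when $\rho>0$ and $e^{2\rho\tau}\notin L^1$. Moreover, truncation levels not calibrated to $e^{-\rho t}$ need not give (H1) in weight $\rho$, so the obstacle reappears for each $n$.

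The missing idea is a transfer from weight $\rho'$ to weight $\rho$, using $\rho'<\rho$ and $t\wedge\tau\le\tau$. Apply \cref{pro2.02} in weight $\rho'$ with $r=t$ (legitimate by your Step 2) and multiply by $e^{2(\rho-\rho')(t\wedge\tau)}$. Then use $e^{2(\rho-\rho')(t\wedge\tau)}e^{2\rho'\tau}\le e^{2\rho\tau}$, together with Cauchy--Schwarz against $e^{-(\rho-\rho')(s-t\wedge\tau)}$, to get
\[
e^{2\rho(t\wedge\tau)}|y_{t\wedge\tau}|^2\le C\,\E\Big[e^{2\rho\tau}|\xi|^2+\frac{1}{2(\rho-\rho')}\int_0^\tau e^{2\rho s}|g(s,0,0)|^2\dif s\,\Big|\,\F_{t\wedge\tau}\Big],\qquad t\geq 0 .
\]
By path continuity this holds at all stopping times. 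So $e^{2\rho\sigma}|y_\sigma|^2$ is dominated by a uniformly integrable martingale, and your localized estimate with $\sigma_m=\tau\wedge m$ then closes via BDG and monotone convergence.

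Alternatively, repair the fallback by truncating at weighted levels, $|\xi_n|\le ne^{-\rho\tau}$ and $|g_n(t,0,0)|\le ne^{-t}e^{-\rho t}$ (the Sixth Step with $\gamma_t=e^{-\rho t}$). Each truncated problem then satisfies (H1) in weight $\rho$ itself, so \cref{thm:3.1} puts $(y^n,z^n)$ in $H^2_\tau(\rho)$. Your $\delta$-absorbed estimate then applies legitimately to $y^n$ and to $y^{n+i}-y^n$, giving the Cauchy property directly in $H^2_\tau(\rho)$.

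Once patched, your route differs from the paper's. The paper proves the absorbed estimate \eqref{3.51} under the standing hypothesis $y\in S^2_\tau(\rho)$. It then re-runs the whole existence procedure of Section 4 in weight $\rho$, so that hypothesis holds by construction. You instead use \cref{thm:3.1} as a black box and need only this one transfer step.
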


\begin{proof}
It is obvious that $\int_{0}^{\tau}\left(|\rho|+|\mu|+\nu^2\right){\rm d}t<+\infty$ since $P(\tau<+\infty)=1$. It follows from $\rho> \mu+\frac{1}{2}\nu^2$ that there must exist an $\varepsilon>0$ such that $\rho= \mu+\frac{1+\varepsilon}{2}\nu^2+\varepsilon$. Then, by taking $\theta=\varepsilon+1>1$, we have
\begin{align}\label{3.25}
\rho= \mu+\frac{\theta}{2}\nu^2+\theta-1>\mu+\frac{\theta}{2}\nu^2.
\end{align}
We first prove that if the generator $g$ satisfies \ref{A:A} with $\bar p=2$, $\bar\rho_\cdot\equiv \rho$, $\bar\mu_\cdot\equiv \mu$, $\bar\nu_\cdot\equiv \nu$ and $f_\cdot$ satisfying
\begin{align}\label{3.30}
\E\left[\int_{0}^{\tau}e^{2\rho t}f_t^2{\rm d}t\right]<+\infty,
\end{align}
and $(y_t,z_t)_{t\in [0,\tau]}$ is an adapted solution of BSDE \eqref{BSDE1.1} such that $y_\cdot\in S_\tau^2(\rho;\R^k)$, then $z_\cdot\in M_\tau^2(\rho;\R^{k\times d})$, and there exists a constant $C_{\theta}>0$ depending only on $\theta$ such that for each $0\leq r\leq t< +\infty$,
\begin{align}\label{3.51}
\begin{split}
&\E\left[\sup_{s\in[t\wedge\tau,\tau]}\left(e^{2\rho s}|y_s|^2\right)\bigg|\F_{r\wedge\tau}\right]+ \E\left[\int_{t\wedge\tau}^{\tau}e^{2\rho s}|z_s|^2{\rm d}s\bigg|\F_{r\wedge\tau}\right]
+(\theta-1)\E\left[\int_{t\wedge\tau}^{\tau}e^{2 \rho s}|y_s|^2{\rm d}s\bigg|\F_{r\wedge\tau}\right]\\
&\ \ \leq
C_{\theta}\E\left[e^{2\rho\tau}|\xi|^2+
\int_{t\wedge\tau}^{\tau}e^{2\rho s}f_s^2{\rm d}s\bigg|\F_{r\wedge\tau}\right].
\end{split}
\end{align}
In fact, in light of \eqref{3.25} and the inequality $2ab\leq (\theta-1)a^2 +\frac{1}{\theta-1}b^2$ for any $a,b\in \R$, \eqref{2.3} and \eqref{2.11} appearing in the proof of \cref{pro2.01,pro2.02} can be respectively transformed into
\begin{align*}
\begin{split}
&|\bar{y}_{t\wedge\tau_n}|^2+\left(1-\frac{1}{\theta}\right) \int_{t\wedge\tau_n}^{\tau_n} |\bar{z}_s|^2{\rm d}s+ (\theta-1)\int_{t\wedge\tau_n}^{\tau_n}|\bar{y}_s|^2{\rm d}s\\
&\ \ \leq|\bar{y}_{\tau_n}|^2+\frac{1}{\theta-1}\int_{t\wedge\tau_n}^{\tau_n} \bar{f}_s^2{\rm d}s-2\int_{t\wedge\tau_n}^{\tau_n}\langle \bar{y}_s,\bar{z}_s{\rm d}B_s\rangle,\ \ t\geq 0
\end{split}
\end{align*}
and the last inequality with $\tau$ instead of $\tau_n$. Thus, in light of \eqref{3.30}, by an identical argument to that in the proof of \cref{pro2.01,pro2.02} we can conclude that there exist two constants $C_{\theta}^1>0$ and $C_{\theta}^2>0$ depending only on $\theta$ such that for each $0\leq r\leq t< +\infty$,
\begin{align*}
\E\left[\int_{t\wedge\tau}^{\tau}e^{2\rho s}|z_s|^2{\rm d}s\bigg|\F_{r\wedge\tau}\right]
\leq
C_{\theta}^1\E\left[\sup_{s\in[t\wedge\tau,\tau]}\left(e^{2 \rho s}|y_s|^2\right) +\int_{t\wedge\tau}^{\tau}e^{2\rho s}f_s^2{\rm d}s\bigg|\F_{r\wedge\tau}\right]
\end{align*}
and
\begin{align*}
\begin{split}
&\E\left[\sup_{s\in[t\wedge\tau,\tau]}\left(e^{2\rho s}|y_s|^2\right)\bigg|\F_{r\wedge\tau}\right]
+(\theta-1)\E\left[\int_{t\wedge\tau}^{\tau}e^{2 \rho s}|y_s|^2{\rm d}s\bigg|\F_{r\wedge\tau}\right]\\
&\ \ \leq C_\theta^2\E\left[e^{2\rho \tau}|\xi|^2+
\int_{t\wedge\tau}^{\tau}e^{2\rho s}f_s^2{\rm d}s\bigg|\F_{r\wedge\tau}\right].
\end{split}
\end{align*}
Then, the desired assertion \eqref{3.51} follows immediately from the last two inequalities.

Finally, with the a priori estimate \eqref{3.51} along with \ref{A:H1c} in the hand, by following closely the proof procedure of \cref{thm:3.1} with $p=2$ we can obtain the desired conclusions of \cref{cor:3.3}.\vspace{0.1cm}
\end{proof}

In light of \cref{rmk:3.4}, it is not difficult to verify that \cref{cor:3.3} strengthens
\citet[Theorem 3.4]{DarlingandPardoux1997} and
\citet[Theorem 4.1]{Pardoux1999} for the case of $\tau$ being a finite stopping time, where the generator $g$ is forced to respectively satisfy the following assumptions \ref{A:H3e} and \ref{A:H3f}, either of which is stronger than assumption \ref{A:H3} when assumption \ref{A:H1c} is in force:
\begin{enumerate}
\renewcommand{\theenumi}{(H3e)}
\renewcommand{\labelenumi}{\theenumi}
\item\label{A:H3e} There exist two constants $\kappa>0$ and $\kappa'=0$ or $1$ such that
    $$\RE (t,y)\in [0,\tau]\times \R^k,\ \ \Dis |g(t,y,0)|\leq |g(t,0,0)|+\kappa(|y|+\kappa').$$
    Moreover, we assume that $\E\left[e^{2\rho \tau}\kappa'\right]<+\infty$.
\end{enumerate}
\begin{enumerate}
\renewcommand{\theenumi}{(H3f)}
\renewcommand{\labelenumi}{\theenumi}
\item\label{A:H3f} There exists a nondecreasing continuous function $\phi(\cdot):\R_+\rightarrow \R_+$ such that
    $$\RE (t,y)\in [0,\tau]\times \R^k,\ \ \Dis |g(t,y,0)|\leq |g(t,0,0)|+\phi(|y|).$$
    Moreover, we assume that
\begin{align}\label{3.23}
\E\left[\int_0^\tau e^{2\rho t} |g(t,e^{-\rho t}\xi_t,e^{-\rho t}\eta_t)|^2{\rm d}t\right]<+\infty,\vspace{0.1cm}
\end{align}
where $\xi_t=\E\left[\xi\big|\F_{t}\right]$ and $\xi=\E\left[\xi\right]+\int_0^\infty \eta_t {\rm d}B_t$ with $\eta_\cdot\in M_\tau^2(0;\R^{k\times d})$.
\end{enumerate}
In particular, the proof strategy of \cref{cor:3.3} originates from the proof of \cref{thm:3.1} distinct with that used in \cite{DarlingandPardoux1997,Pardoux1999}, which utilizes approximations of BSDEs with constant terminal times.\vspace{0.2cm}

In the sequel, we provide two concrete examples to which \cref{cor:3.3} can be applied, while both \cite[Theorem 3.4]{DarlingandPardoux1997} and \cite[Theorem 4.1]{Pardoux1999} cannot be applied.

\begin{ex}\label{ex3.2}
(i) Let $k=1$, $P(\tau<+\infty)=1$ and for each $(t,y,z)\in [0,\tau]\times\R\times\R^{1\times d}$, let
$$
g(t,y,z):=(-y+\frac{1}{4}){\bf 1}_{y\leq -\frac{1}{4}}+\sqrt{-y}{\bf 1}_{-\frac{1}{4}\leq y\leq 0}+\ln(1+|z|).
$$
In light of \cref{rmk:3.0*}, it is easy to verify that this $g$ satisfies all assumptions in \cref{cor:3.3} with
$$
\mu:=-1, \ \ \nu:=1 \ \ \text{and} \ \ \rho:=1>\mu+\frac{1}{2}\nu^2=-\frac{1}{2}.
$$
It then follows from \cref{cor:3.3} that for each $\xi\in L_\tau^2(\rho;\R)$, for example $\xi:=e^{-\tau}B_\tau$, BSDE \eqref{BSDE1.1} admits a unique weighted $L^2$ solution in the space of $H_\tau^2(\rho;\R\times\R^{1\times d})$. However, it can also be easily verified that this $g$ satisfies the first condition in \ref{A:H3e} with $\kappa=\kappa'=1$ and the first condition in \ref{A:H3f} with $\phi(x)=x+1$, while it may satisfy neither $\E\left[e^{2\rho \tau}\kappa'\right]=\E\left[e^{2\tau}\right]<+\infty$ nor \eqref{3.23}. Consequently, the previous assertion can not be obtained by
\cite[Theorem 3.4]{DarlingandPardoux1997} and \cite[Theorem 4.1]{Pardoux1999}.\vspace{0.2cm}

(ii) Let $k=2$, $P(\tau<+\infty)=1$ and for each $(t,y,z)\in [0,\tau]\times\R^2\times\R^{2\times d}$ with $y=(y_1,y_2)$, let
$$
g(t,y,z):=-3
\begin{bmatrix}
y_1(1+|y_2|)\\
y_2(1+e^{y_1})\\
\end{bmatrix}
+
\begin{bmatrix}
\sqrt{1+2|z|}\\
|z|\\
\end{bmatrix}.
$$
In light of \cref{rmk:3.0*}, it is easy to verify that this $g$ satisfies all assumptions in \cref{cor:3.3} with
$$
\mu:=-3, \ \ \nu:=\sqrt{2} \ \ \text{and} \ \ \rho:=-1>\mu+\frac{1}{2}\nu^2=-2.
$$
It then follows from \cref{cor:3.3} that for each $\xi\in L_\tau^2(\rho;\R^2)$, for example $\xi:=e^{\tau}$, BSDE \eqref{BSDE1.1} admits a unique weighted $L^2$ solution in $H_\tau^2(\rho;\R^2\times\R^{2\times d})$. However, this assertion can not be obtained by
\cite[Theorem 3.4]{DarlingandPardoux1997} and \cite[Theorem 4.1]{Pardoux1999} since this $g$ may satisfy neither \ref{A:H3e} nor \eqref{3.23}.\vspace{0.1cm}
\end{ex}

Finally, the following corollary includes \cref{cor:3.3} as a particular case when $|g(\cdot,0,0)|\equiv 0$.

\begin{cor}\label{cor:3.3*}
Let $P(\tau<+\infty)=1$ and let $\rho,\mu\in\R$ and $\nu\in\R_+$ be three given constants satisfying
\begin{align*}
\rho\geq \mu+\frac{\theta}{2[1\wedge(p-1)]}\nu^2.\vspace{0.1cm}
\end{align*}
Assume that the generator $g$ satisfies assumptions \ref{A:H1}-\ref{A:H5} with $\mu_\cdot\equiv \mu$, $\nu_\cdot\equiv \nu$ and $\rho_\cdot\equiv \rho$. Then, for each $\xi\in L_\tau^p(\rho;\R^k)$, BSDE \eqref{BSDE1.1} admits a unique weighted $L^p$ solution $(y_t,z_t)_{t\in[0,\tau]}\in H_\tau^p(\rho;\R^{k}\times\R^{k\times d})$. Moreover, if $|\xi|+|g(\cdot,0,0)|\leq M$ and $\rho\leq -c$ for two constants $M,c>0$, then $(y_t)_{t\in[0,\tau]}\in S^\infty(\R^k)$.\vspace{0.1cm}
\end{cor}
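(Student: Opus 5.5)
The first assertion is a direct specialization of \cref{thm:3.1}. Take the constant processes $\rho_\cdot\equiv\rho$, $\mu_\cdot\equiv\mu$ and $\nu_\cdot\equiv\nu$. Since $P(\tau<+\infty)=1$, we have $\int_0^\tau(|\rho|+|\mu|+\nu^2)\,{\rm d}t=(|\rho|+|\mu|+\nu^2)\tau<+\infty$ almost surely. The standing requirement $\rho_\cdot\geq\mu_\cdot+\frac{\theta}{2[1\wedge(p-1)]}\nu_\cdot^2$ is exactly the hypothesis of the corollary. Assumptions \ref{A:H1}--\ref{A:H5} are assumed, so \cref{thm:3.1} gives, for each $\xi\in L_\tau^p(\rho;\R^k)$, a unique weighted $L^p$ solution in $H_\tau^p(\rho;\R^k\times\R^{k\times d})$.

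For the boundedness assertion, assume $|\xi|+|g(\cdot,0,0)|\leq M$ and $\rho\leq -c<0$. I would follow the second half of the proof of \cref{cor:3.4}.

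First, I check that $g$ satisfies \ref{A:A}. By \ref{A:H4} and \ref{A:H5},
$$\langle\hat y,g(t,y,z)\rangle\leq |g(t,0,0)|+\mu|y|+\nu|z|,$$
so \ref{A:A} holds with $\bar p=p$, $\bar\rho=\rho$, $\bar\mu=\mu$, $\bar\nu=\nu$ and $f_t=|g(t,0,0)|\leq M$. The integrability of $f$ required in \ref{A:A} is \ref{A:H1}, which holds anyway. Indeed, $\int_0^\tau e^{\rho s}M\,{\rm d}s\leq M/c$ is bounded.

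Next, I apply \cref{pro2.02} with $r=t$. For every $t\geq 0$, this gives
\begin{align*}
e^{p\rho(t\wedge\tau)}|y_{t\wedge\tau}|^p
\leq C_{p,\theta}\E\Big[e^{p\rho\tau}|\xi|^p+\Big(\int_{t\wedge\tau}^\tau e^{\rho s}|g(s,0,0)|\,{\rm d}s\Big)^p\Big|\F_{t\wedge\tau}\Big].
\end{align*}
The difficulty, and the step I expect to be the main obstacle, is the weight on the left side. Because $\rho<0$, the factor $e^{p\rho(t\wedge\tau)}$ is small and cannot simply be dropped as it was in \cref{cor:3.4}. So I must extract a bound that does not depend on $t$.

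To handle this, I divide both sides by $e^{p\rho(t\wedge\tau)}$. This factor is $\F_{t\wedge\tau}$-measurable, so it can be moved inside the conditional expectation. Then
$$e^{p\rho\tau}|\xi|^p e^{-p\rho(t\wedge\tau)}=e^{p\rho(\tau-t\wedge\tau)}|\xi|^p\leq M^p,$$
using $\rho<0$. Similarly,
$$\Big(\int_{t\wedge\tau}^\tau e^{\rho(s-t\wedge\tau)}|g(s,0,0)|\,{\rm d}s\Big)^p\leq\Big(M\int_0^\infty e^{-cu}\,{\rm d}u\Big)^p=(M/c)^p.$$
Hence $|y_{t\wedge\tau}|^p\leq C_{p,\theta}(M^p+(M/c)^p)$ for all $t\geq 0$. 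This uniform bound, together with the continuity of $y_\cdot$, gives $y_\cdot\in S^\infty(\R^k)$.

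The only remaining check is that this renormalization is legitimate, namely that \cref{pro2.02} holds in its conditional form. It does, since the proposition is stated for conditional expectations with $0\leq r\leq t$.
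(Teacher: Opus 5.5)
Your proof is correct and follows the paper's argument essentially step by step: you specialize \cref{thm:3.1} for existence and uniqueness, verify \ref{A:A} with $f_t=|g(t,0,0)|$, apply \cref{pro2.02} with $r=t$, and move the $\F_{t\wedge\tau}$-measurable weight $e^{-p\rho(t\wedge\tau)}$ inside the conditional expectation, using $\rho\leq -c$ to get a bound uniform in $t$. Your constant $C_{p,\theta}\big(M^p+(M/c)^p\big)$ is the accurate one; the paper's displayed bound $K_{p,\theta}M^p(1+\frac{1}{c})$ drops the power $p$ on $1/c$, which does not affect the conclusion.
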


\begin{proof}
By virtue of $P(\tau<+\infty)=1$, we have $\int_{0}^{\tau}\left(|\rho|+|\mu|+\nu^2\right){\rm d}t<+\infty$, and then by \cref{thm:3.1} we can deduce that BSDE \eqref{BSDE1.1} admits a unique weighted $L^p$ solution $(y_t,z_t)_{t\in[0,\tau]}\in H_\tau^p(\rho;\R^{k}\times\R^{k\times d})$. Moreover, in light of \ref{A:H1}, \ref{A:H4} and \ref{A:H5}, we have
\begin{align*}
\left<\hat{y},~g(t,y,z)\right>&=\left<\hat{y},~g(t,y,z)-g(t,0,z)+g(t,0,z)-g(t,0,0)+g(t,0,0)\right>\\
&\leq \mu|y|+\nu|z|+|g(t,0,0)|, \ \ (t,y,z)\in [0,\tau]\times\R^k\times\R^{k\times d},
\end{align*}
which means that $g$ satisfies \ref{A:A} with $\bar{p}:=p$, $\bar\mu_t\equiv \mu$, $\bar\nu_t\equiv \nu$, ${\bar\rho_t}\equiv \rho$ and $f_t:=|g(t,0,0)|$. It follows from \cref{pro2.02} with $r=t$ that there exists a constant $K_{p,\theta}>0$ depending only on $p$ and $\theta$ such that
\begin{align}\label{3.211}
e^{p\rho (t\wedge\tau)}|y_{t\wedge\tau}|^p\leq
K_{p,\theta}\E\left[e^{p\rho\tau}|\xi|^p+\left(
\int_{t\wedge\tau}^{\tau}e^{\rho s}|g(s,0,0)|{\rm d}s\right)^p\bigg|\F_{t\wedge\tau}\right],\ \ t\geq 0.
\end{align}
In light of $\rho\leq -c$ and $|\xi|+|g(\cdot,0,0)|\leq M$, by \eqref{3.211} we deduce that for each $t\geq 0$,
\begin{align*}
\begin{split}
|y_{t\wedge\tau}|^p
&\leq
K_{p,\theta}\E\left[e^{p\rho(\tau-t\wedge\tau)}|\xi|^p+\left(
\int_{t\wedge\tau}^{\tau}e^{\rho(s-t\wedge\tau)}|g(s,0,0)|{\rm d}s\right)^p\bigg|\F_{t\wedge\tau}\right]\\
&\leq
K_{p,\theta}\E\left[e^{-pc(\tau-t\wedge\tau)}|\xi|^p+\left(
\int_{t\wedge\tau}^{\tau}e^{-c(s-t\wedge\tau)}|g(s,0,0)|{\rm d}s\right)^p\bigg|\F_{t\wedge\tau}\right]\leq K_{p,\theta}M^p\left(1+\frac{1}{c}\right).
\end{split}
\end{align*}
Therefore, we obtain that $(y_t)_{t\in[0,\tau]} \in S^\infty(\R^k)$.
\end{proof}

\begin{rmk}\label{rmk3.5}
It is not difficult to verify that when $|g(\cdot,0,0)|\equiv 0$ is in force, \cref{cor:3.3*} includes \cref{cor:3.3} as a particular case of $p=2$ since $\rho>\mu+\frac{1}{2}\nu^2$ can imply $\rho\geq \mu+\frac{\theta}{2}\nu^2$ for some $\theta>1$. We mention that in the case of $\nu=0$, the constant $\rho$ in  \cref{cor:3.3*} may be bigger than or equal to the constant $\mu$, while $\rho$ in  \cref{cor:3.3} has to be bigger than $\mu$. In addition, readers are referred to \citet[Theorem 5.2]{Briand2003} for a result related to \cref{cor:3.3*}, where $p\in (1,2]$, $\tau$ is a stopping time taking values in $[0,+\infty]$ and the generator $g$ is forced to satisfy a assumption similar to \ref{A:H3f}.
\end{rmk}

\begin{ex}\label{ex3.6}
Let $k=1$, $P(\tau<+\infty)=1$ and for each $(t,y,z)\in [0,\tau]\times\R\times\R^{1\times d}$,
$$
g(t,y,z):=e^{-y}{\bf 1}_{y\leq 0}+(1-y){\bf 1}_{y>0}-1.
$$
In light of \cref{rmk:3.0*}, it is easy to verify that this $g$ satisfies all assumptions in \cref{cor:3.3*} with
$$
\mu:=-1, \ \ \nu:=0 \ \ \text{and} \ \ \rho:=-1.
$$
It then follows from \cref{cor:3.3*} that for each $\xi\in L_\tau^p(\rho;\R)$, BSDE \eqref{BSDE1.1} admits a unique solution in $H_\tau^p(\rho;\R\times\R^{1\times d})$. This conclusion can not be obtained by \cref{cor:3.3} or any existing results.
\end{ex}

\subsection{Continuous dependence and stability\vspace{0.2cm}}

In this subsection, we state a continuous dependence property and a stability theorem for the general weighted $L^p$ solutions of BSDEs under assumptions \ref{A:H4} and \ref{A:H5}. They extend Theorems 3.13 and 3.14 in \citet{Li2025}, where the process $\rho_\cdot:=\int_0^\cdot (\beta \mu_s^+ +\frac{\theta}{2}\nu_s^2) \dif s$. By \cref{pro2.1}, the proofs are straightforward; nevertheless, a complete argument is provided here for readers' convenience.

\begin{thm}[\bf Continuous dependence]\label{thm:3.2}
Let $\xi, \xi'\in L_\tau^p(\rho_\cdot;\R^k)$, both $g$ and $g'$ be the generator of BSDEs satisfying assumptions \ref{A:H4} and \ref{A:H5}, and $(Y_\cdot,Z_\cdot)$ and $(Y_\cdot',Z_\cdot')$ be the weighted $L^p$ solution of BSDE$(\xi,\tau,g)$ and BSDE$(\xi',\tau,g')$, respectively, in the space of $H_\tau^p(\rho_\cdot;\R^{k}\times\R^{k\times d})$.
Then there exists a constant $C_{p,\theta}>0$ depending only on $p$ and $\theta$ such that
\begin{align*}
\begin{split}
&\E\left[\sup_{s\in[0,\tau]}\left(e^{p \int_{0}^{s}\rho_r{\rm d}r}|Y_s-Y_s'|^p\right)\right]+\E\left[\left(\int_0^{\tau} e^{2\int_0^s \rho_r{\rm d}r}|Z_s-Z_s'|^2{\rm d}s\right)^{\frac{p}{2}}\right]\\
&\ \ \leq  C_{p,\theta}\left(\E\left[e^{p \int_{0}^{\tau}\rho_r{\rm d}r}|\xi-\xi'|^p\right]+\E\left[\left(\int_0^\tau e^{\int_0^s\rho_r{\rm d}r}\left|g(t,Y'_t,Z'_t)-g'(t,Y'_t,Z'_t)\right|{\rm d}s\right)^p\right]\right).
\end{split}
\end{align*}
\end{thm}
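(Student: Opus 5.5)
The plan is to reduce the statement to the a priori estimate of \cref{pro2.1} applied to the difference of the two solutions. Set $\hat Y_\cdot:=Y_\cdot-Y_\cdot'$, $\hat Z_\cdot:=Z_\cdot-Z_\cdot'$ and $\hat\xi:=\xi-\xi'$. Since both pairs lie in $H_\tau^p(\rho_\cdot;\R^{k}\times\R^{k\times d})$, the pair $(\hat Y_\cdot,\hat Z_\cdot)$ lies in the same space and is an adapted solution of the BSDE with terminal value $\hat\xi$ and generator
$$
\hat g(t,y,z):=g(t,y+Y_t',z+Z_t')-g'(t,Y_t',Z_t'),\ \ (t,y,z)\in[0,\tau]\times\R^k\times\R^{k\times d}.
$$
This generator reproduces the equation for $\hat Y$ because $\hat g(t,\hat Y_t,\hat Z_t)=g(t,Y_t,Z_t)-g'(t,Y'_t,Z'_t)$.

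Next I will check that $\hat g$ satisfies assumption \ref{A:A}. The decomposition is
$$
\hat g(t,y,z)=\big[g(t,y+Y_t',z+Z_t')-g(t,Y_t',z+Z_t')\big]+\big[g(t,Y_t',z+Z_t')-g(t,Y_t',Z_t')\big]+\big[g(t,Y_t',Z_t')-g'(t,Y_t',Z_t')\big].
$$
Taking the inner product with $\hat y$, \ref{A:H4} bounds the first bracket by $\mu_t|y|$. Then \ref{A:H5} bounds the second by $\nu_t|z|$. Finally, Cauchy--Schwarz bounds the third by $f_t:=|g(t,Y'_t,Z'_t)-g'(t,Y'_t,Z'_t)|$. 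Hence \ref{A:A} holds with $\bar p:=p$, $\bar\rho_\cdot:=\rho_\cdot$, $\bar\mu_\cdot:=\mu_\cdot$ and $\bar\nu_\cdot:=\nu_\cdot$, and the standing relation $\rho_\cdot\geq\mu_\cdot+\frac{\theta}{2[1\wedge(p-1)]}\nu_\cdot^2$ supplies the required inequality among these processes. Only \ref{A:H4} and \ref{A:H5} for $g$ are used here; the hypotheses on $g'$ are not needed beyond the existence of $(Y',Z')$.

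The one point to treat carefully is the integrability requirement on $f_\cdot$ in \ref{A:A}. If the right-hand side of the claimed inequality is infinite, there is nothing to prove. Otherwise $\E[(\int_0^\tau e^{\int_0^s\rho_r{\rm d}r}f_s{\rm d}s)^p]<+\infty$, so \ref{A:A} holds in full. In addition, $\hat Y_\cdot\in S_\tau^p(\rho_\cdot;\R^k)$, so the assumptions of \cref{pro2.01} are met.

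With this in place, I will apply \cref{pro2.1} with $r=t=0$. The third term on its left-hand side is nonnegative by the standing relation between $\rho_\cdot$, $\mu_\cdot$ and $\nu_\cdot$, so it can be dropped. This yields exactly the stated bound with $C_{p,\theta}$ equal to the constant of \cref{pro2.1}, which depends only on $p$ and $\theta$. There is no genuine obstacle beyond verifying \ref{A:A} through the three-term splitting above.
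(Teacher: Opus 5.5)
Your proposal is correct and follows the paper's own argument: the paper writes $(Y_\cdot-Y_\cdot',Z_\cdot-Z_\cdot')$ as the solution of the BSDE with generator $g(t,y+Y_t',z+Z_t')-g'(t,Y_t',Z_t')$, and assumes without loss of generality that the right-hand side is finite. It then checks assumption \ref{A:A} with $f_t:=|g(t,Y_t',Z_t')-g'(t,Y_t',Z_t')|$ and concludes from \cref{pro2.1}. Your three-term splitting of $\hat g$ and your remark that the nonnegative third term on the left-hand side of \cref{pro2.1} can be dropped simply make explicit details the paper leaves implicit.
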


\begin{proof}
Without loss of generality, we can assume that
$$\E\left[\left(\int_0^\tau e^{\int_0^s\rho_r{\rm d}r}\left|g(t,Y'_t,Z'_t)-g'(t,Y'_t,Z'_t)\right|{\rm d}s\right)^p\right]<+\infty.$$
Let\vspace{-0.2cm}
\begin{align*}
\begin{split}
\tilde{\xi}:=\xi-\xi', \ \ \tilde{Y}_\cdot:=Y_\cdot-Y_\cdot' \ \ {\rm and}\ \ \tilde{Z}_\cdot:=Z_\cdot-Z_\cdot'.
\end{split}
\end{align*}
Then, $(\tilde{Y}_\cdot,\tilde{Z}_\cdot)$ is a weighted $L^p$ solution of the following BSDE:
\begin{align*}
\tilde{Y}_t=\tilde{\xi}+\int_t^\tau \tilde{g}(s,\tilde{Y}_s,\tilde{Z}_s){\rm d}s-\int_t^\tau\tilde{Z}_s{\rm d}B_s, \ \ t\in[0,\tau],
\end{align*}
where the generator $\tilde{g}$ is defined by
$$
\tilde{g}(t,y,z):=g(t,y+Y_t',z+Z_t')-g'(t,Y_t',Z_t'), ~~(t,y,z)\in[0,\tau]\times\R^k\times\R^{k\times{d}}.
$$
It follows from \ref{A:H4} and \ref{A:H5} that for each $(y,z)\in\R^k\times\R^{k\times{d}}$,
\begin{align*}
\begin{split}
\left<\hat{y},\tilde{g}(t,y,z)\right>&=\left<\hat{y},g(t,y+Y_t',z+Z_t')-g'(t,Y_t',Z_t')\right>\\
&\leq \mu_t|y|+\nu_t|z|+|g(t,Y_t',Z_t')-g'(t,Y_t',Z_t')|, \ \ t\in[0,\tau],
\end{split}
\end{align*}
which implies that the generator $\tilde{g}$ satisfies assumption (A) with $\bar{p}:=p,$ $\bar\mu_t:=\mu_t$, $\bar\nu_t:=\nu_t$, $\bar\rho_t:=\rho_t$ and $f_t:=|g(t,Y_t',Z_t')-g'(t,Y_t',Z_t')|$.
Then, the desired conclusion follows from \cref{pro2.1}.
\end{proof}

\begin{thm}[\bf Stability]\label{thm:3.3}
For each $n\geq1$, let $\xi^n, \xi\in L_\tau^p(\rho_\cdot;\R^k)$, both $g^n$ and $g$ be the generator of BSDEs   satisfying assumptions \ref{A:H4} and \ref{A:H5}, and $(Y_\cdot^n, Z_\cdot^n)$ and $(Y_\cdot, Z_\cdot)$ be the weighted $L^p$ solution of BSDE$(\xi^n,\tau,g^n)$ and BSDE$(\xi,\tau,g)$, respectively, in the space of $H_\tau^p(\rho_\cdot;\R^{k}\times\R^{k\times d})$. If
$$\lim_{n\rightarrow\infty}\E\left[e^{p \int_{0}^{\tau}\rho_r{\rm d}r}|\xi^n-\xi|^p+\left(\int_0^\tau e^{\int_0^s\rho_r{\rm d}r}\left|g^n(s,Y_s,Z_s)-g(s,Y_s,Z_s)\right|{\rm d}s\right)^p\right]=0,$$
then
$$
\lim_{n\rightarrow\infty}\E\left[\sup_{s\in[0,\tau]}\left(e^{p \int_{0}^{s}\rho_r{\rm d}r}|Y_s^n-Y_s|^p\right)+\left(\int_0^{\tau} e^{2\int_0^s \rho_r{\rm d}r}|Z_s^n-Z_s|^2{\rm d}s\right)^{\frac{p}{2}}\right]=0.\vspace{0.2cm}
$$
\begin{proof}
For each $n\geq1$, let
$$\hat{Y}_\cdot^n:=Y_\cdot^n-Y_\cdot, \ \ \hat{Z}_\cdot^n:=Z_\cdot^n-Z_\cdot \ \ {\rm and}\ \ \hat{\xi}^n:=\xi^n-\xi.$$
Then, for each $n\geq1$, $(\hat{Y}_\cdot^n,\hat{Z}_\cdot^n)$ is a weighted $L^p$ solution of the following BSDE:
\begin{align*}
\hat{Y}_t^n=\hat{\xi}^n+\int_t^\tau \hat{g}^n(s,\hat{Y}_s^n,\hat{Z}_s^n){\rm d}s-\int_t^\tau\hat{Z}_s^n{\rm d}B_s, \ t\in[0,\tau],
\end{align*}
where the generator $\hat{g}^n$ is defined by
$$\hat{g}^n(t,y,z):=g^n(t,y+Y_t,z+Z_t)-g(t,Y_t,Z_t),
~~(t,y,z)\in[0,\tau]\times\R^k\times\R^{k\times{d}}.$$
It follows from \ref{A:H4} and \ref{A:H5} that for each $n\geq1$ and $(y,z)\in\R^k\times\R^{k\times{d}}$,
\begin{align*}
\begin{split}
\left<\hat{y},\hat{g}^n(t,y,z)\right>&=\left<\hat{y},g^n(t,y+Y_t,z+Z_t)-g(t,Y_t,Z_t)\right>\\
&\leq \mu_t|y|+\nu_t|z|+|g^n(t,Y_t,Z_t)-g(t,Y_t,Z_t)|, \ \ t\in[0,\tau],
\end{split}
\end{align*}
which implies that for each $n\geq1$, the generator $\hat g^n$ satisfies assumption \ref{A:A} with $\bar{p}:=p$, $\bar\mu_t:=\mu_t$, $\bar\nu_t:=\nu_t$, $\bar\rho_t:=\rho_t$ and $f_t:=|g^n(t,Y_t,Z_t)-g(t,Y_t,Z_t)|$. Then, it follows from \cref{pro2.1} with $r=t=0$ that there exists a constant $C_{p,\theta}>0$ depending only on $p$ and $\theta$ such that
\begin{align*}
\begin{split}
&\E\left[\sup_{s\in[0,\tau]}\left(e^{p \int_{0}^{s}\rho_r{\rm d}r}|Y_s^n-Y_s|^p\right)+\left(\int_0^{\tau} e^{2\int_0^s \rho_r{\rm d}r}|Z_s^n-Z_s|^2{\rm d}s\right)^{\frac{p}{2}}\right]\\
&\ \ \leq C_{p,\theta}\E\left[e^{p \int_{0}^{\tau}\rho_r{\rm d}r}|\xi^n-\xi|^p+\left(\int_0^\tau e^{\int_0^s\rho_r{\rm d}r}\left|g^n(s,Y_s,Z_s)-g(s,Y_s,Z_s)\right|{\rm d}s\right)^p\right].
\end{split}
\end{align*}
Thus, the desired assertion follows by sending $n$ to infinity in the last inequality.
\end{proof}
\end{thm}
\section{Proof of the existence part of \cref{thm:3.1}}
\setcounter{equation}{0}
In this section, we will give the proof of the existence part of \cref{thm:3.1} by six steps. In the first three steps we study the special case that $g$ does not depend on the state variable $z$ and $p=2$, and the general case is tackled with in the subsequent three steps.

For the sake of clarity, let $\tilde{\rho}_\cdot$ be an $(\F_t)$-progressively measurable process satisfying
 $\tilde{\rho}_\cdot \geq \mu_\cdot$ and $\int_0^\tau |\tilde{\rho}_t|\dif t<+\infty$, and the generator $g$ be independent of $z$ and satisfy the following assumptions:
\begin{enumerate}
\renewcommand{\theenumi}{(H1')}
\renewcommand{\labelenumi}{\theenumi}
\item\label{A:H1'} $\E\left[\left(\int_0^\tau e^{\int_0^s\tilde{\rho}_r{\rm d}r}|g(s,0)|{\rm d}s\right)^2\right]<+\infty$;
\renewcommand{\theenumi}{(H2')}
\renewcommand{\labelenumi}{\theenumi}
\item\label{A:H2'} ${\rm d}\mathbb{P}\times{\rm d} t-a.e.$, $y \mapsto g(\omega,t,y)$ is continuous;
\renewcommand{\theenumi}{(H3')}
\renewcommand{\labelenumi}{\theenumi}
\item\label{A:H3'} There exists an $(\F_t)$-progressively measurable process $(\bar{\alpha}_t)_{t\in[0,\tau]}$ satisfying $\essinf\limits_{t\in[0,\tau]}\bar{\alpha}_t>0$ such that for each $r\in \R_+$, it holds that
$$\E\left[\int_0^\tau \bar{\alpha}_t\overline{\psi}(t,r\bar{\alpha}_t){\rm d}t\right]<+\infty$$
with
$$\overline{\psi}(\omega,t,r):=\sup_{|y|\leq r} \left|g(\omega,t,y)-g(\omega,t,0)\right|, \ \ t\in[0,\tau(\omega)];$$
\renewcommand{\theenumi}{(H4')}
\renewcommand{\labelenumi}{\theenumi}
\item\label{A:H4'} For each $(y_1, y_2)\in \R^k\times\R^k$,  we have
$$
\left\langle y_1-y_2,g(\omega,t,y_1)-g(\omega,t,y_2)\right\rangle\leq \mu_t(\omega)|y_1-y_2|^2,\ \ \ t\in[0,\tau(\omega)].
$$
\end{enumerate}
In the first three steps we aim to verify that for each $\xi\in L_\tau^2(\tilde{\rho}_\cdot;\R^{k})$, the following BSDE
\begin{align}\label{BSDE2.01}
  y_t=\xi+\int_t^\tau g(s,y_s){\rm d}s-\int_t^\tau z_s{\rm d}B_s, \ \ t\in[0,\tau]
\end{align}
admits a unique weighted $L^2$ solution in the space of $H_\tau^2(\tilde{\rho}_\cdot;\R^{k}\times\R^{k\times d})$.\vspace{0.2cm}

It is obvious that if assumption \ref{A:H3'} holds for a process $(\bar{\alpha}_t)_{t\in [0,\tau]}$, then it also true for the process
$$
(\essinf\limits_{0 \leq s\leq t}{\bar{\alpha}_s})\wedge (e^{-\int_0^t {\tilde{\rho}_s}^+{\rm d}s}),\ \ t\in [0,\tau].
$$
Thus, without loss of generality we can assume that the $\bar\alpha_\cdot$ in \ref{A:H3'} is nonincreasing and satisfies
\begin{align}\label{*}
0<\bar{\alpha}_\tau\leq \bar{\alpha}_t \leq e^{-\int_0^t{\tilde{\rho}_s}^+{\rm d}s}\leq 1, \ \ \ t\in[0,\tau].
\end{align}

{\bf First Step:} We prove that BSDE \eqref{BSDE2.01} admits a weighted $L^2$ solution in $H_\tau^2({\tilde{\rho}_\cdot}^+;\R^{k}\times\R^{k\times d})$ provided that assumptions \ref{A:H2'} and \ref{A:H4'} hold and there exists a nonnegative constant $K$ such that
\begin{align}\label{2.6}
\begin{split}
|\xi|\leq K\bar{\alpha}_\tau\ \ \ {\rm and}\ \  \ |g(t,\cdot)|\leq Ke^{-t}\bar{\alpha}_t, \ \ \ t\in[0,\tau].
\end{split}
\end{align}
In fact, in light of \eqref{2.6} and \eqref{*}, we have
$$
\E\left[e^{2 \int_0^\tau \tilde{\rho}^+_r {\rm d}r}|\xi|^2 +\left(\int_0^\tau e^{\int_0^s\tilde{\rho}^+_r{\rm d}r}|g(s,0)|{\rm d}s\right)^2\right]\leq 2K^2,
$$
and
$$
\E\left[\int_0^\tau e^{ \int_0^t\tilde{\rho}^+_s{\rm d}s}\overline{\psi}(t,r\bar{\alpha}_t){\rm d}t\right]
\leq 2K\E\left[\int_0^\tau e^{ \int_0^t\tilde{\rho}^+_s{\rm d}s} e^{-t}\bar{\alpha}_t~{\rm d}t\right]\leq 2K.\vspace{0.2cm}
$$
Note that \ref{A:H4'} is also true when the process $\mu_\cdot$ is replaced with a larger process $\tilde{\rho}^+_\cdot$. The desired conclusion follows immediately from Proposition 4.1 in \citet{Li2025}.\vspace{0.2cm}

{\bf Second Step:} We prove that BSDE \eqref{BSDE2.01} admits a weighted $L^2$ solution in $H_\tau^2({\tilde{\rho}_\cdot}^+;\R^{k}\times\R^{k\times d})$ provided that assumptions \ref{A:H2'}-\ref{A:H4'} hold and there exists a nonnegative constant $K$ such that
  \begin{align}\label{22.6}
  \begin{split}
  |\xi|\leq K \bar{\alpha}_\tau^3 \ \ \ {\rm and}\ \ \ |g(t,0)|\leq Ke^{-t}\bar{\alpha}_t^3, \ \ \ t\in[0,\tau].
  \end{split}
  \end{align}

Now, we assume that assumptions \ref{A:H2'}-\ref{A:H4'} and \eqref{22.6} are in force. Fix a real $r>0$ assigned later and define the following auxiliary function: for each $t\in[0,\tau]$,
\begin{align}\label{Matrix}
\theta_{r}^{\bar{\alpha}_\cdot}(t,u):=\left\{\begin{aligned}
&\bar{\alpha}_t^2, \quad \quad \quad \quad \quad \quad \quad 0\leq u\leq r\bar{\alpha}_t^2;\\
&-u+(r+1)\bar{\alpha}_t^2, \quad r\bar{\alpha}_t^2< u\leq(r+1)\bar{\alpha}_t^2;\\
&0, \quad \quad \quad \quad \quad \quad \quad \quad u>(r+1)\bar{\alpha}_t^2.
\end{aligned}\right.
\end{align}
It is obvious that for each $t\in[0,\tau]$ and $u,u_1,u_2\geq 0$,
\begin{align}\label{4.1}
0\leq\theta_{r}^{\bar{\alpha}_\cdot}(t,u)\leq \bar{\alpha}_t^2\vspace{-0.2cm}
\end{align}
and
\begin{align}\label{4.2}
|\theta_{r}^{\bar{\alpha}_\cdot}(t,u_1)-\theta_{r}^{\bar{\alpha}_\cdot}(t,u_2)| \leq|u_1-u_2|.
\end{align}
Then we define for each $n\geq1$ and $y\in \R^k$,
$$g^n(t,y):=\theta_{r}^{\bar{\alpha}_\cdot}(t,|y|)\left(g(t,y)-g(t,0)\right) \frac{n{e^{-t}}}{{\overline{\psi}(t,(r+1)\bar{\alpha}_t)}\vee \left(ne^{-t}\bar{\alpha}_t^2\right)}+g(t,0), \ \ t\in[0,\tau].$$
Clearly, $g^n$ satisfies \ref{A:H2'} for each $n\geq 1.$ In light of the definitions of $\overline{\psi}(\cdot,\cdot)$ and $\theta_{r}^{\bar{\alpha}_\cdot}$, along with \eqref{*}, \eqref{22.6} and \eqref{4.1}, we can deduce that for each $n\geq1$ and $y\in \R^k$,
\begin{align}\label{4.2*}
|g^n(t,y)|\leq (n+K)e^{-t}\bar{\alpha}_t, \ \ t\in[0,\tau].
\end{align}
Moreover, in light of \eqref{*}, \eqref{Matrix}-\eqref{4.2} and \ref{A:H4'} for the generator $g$, by a similar argument to that in \cite[page 24, lines 7-21]{Li2025} we can obtain that for each $n\geq 1$, $g^n$ satisfies \ref{A:H4'} with $\mu_\cdot$ being replaced by
$$\tilde{\mu}_t:=\tilde\rho^+_t +ne^{-t},\ \ t\in [0,\tau].$$
Thus, in light of the fact that the space $H_\tau^2(\tilde{\mu}_\cdot;\R^{k}\times\R^{k\times d})$ does not change when the process $\tilde{\mu}_\cdot$ is replaced with $\tilde\rho^+_\cdot$ along with \eqref{*}, \eqref{22.6} and  \eqref{4.2*}, by the analysis of the first step we can conclude that for each $n\geq1$, BSDE $(\xi,\tau,g^n)$ admits a weighted $L^2$ solution $(y_t^n,z_t^n)_{t\in[0,\tau]}$ in $H_\tau^2({\tilde{\rho}_\cdot}^+;\R^{k}\times\R^{k\times d})$.

On the other hand, by assumption \ref{A:H4'} for the generator $g$ along with \eqref{22.6} and \eqref{4.1} we can deduce that for each $n\geq1$ and $y\in \R^k$,
\begin{align*}
\begin{split}
\left\langle \hat{y},g^n(t,y)\right\rangle=&\left\langle \hat{y},g(t,y)-g(t,0)\right\rangle\theta_{r}^{\bar{\alpha}_\cdot}(t,|y|) \frac{ne^{-t}}{{\overline{\psi}(t,(r+1)\bar{\alpha}_t)}\vee \left(ne^{-t}\bar{\alpha}_t^2\right)}+\left\langle \hat{y},g(t,0)\right\rangle\\
\leq &\mu_t|y|+Ke^{-t}\bar{\alpha}_t^3, \ \ \ t\in[0,\tau],
\end{split}
\end{align*}
which means that assumption (A) is satisfied by the generator $g^n$ with $\bar{p}:=2$, $\bar\mu_t:=\mu_t$, $\bar\nu_t:=0$, ${\bar\rho_t}:=\tilde{\rho}_t^+$ and $f_t:=Ke^{-t}\bar{\alpha}_t^3$.
Then, in light of the fact that $\bar\alpha_\cdot$ is nonincreasing along with
\eqref{*} and \eqref{22.6}, \cref{pro2.02} yields that there exists a uniform constant $C>0$ such that for each $n\geq1$ and $t\geq0$,
\begin{align}\label{4.22}
\begin{split}
&|y_{t\wedge\tau}^{n}|^2\leq e^{2 \int_0^{t\wedge\tau}{\tilde{\rho}_r}^+{\rm d}r}|y_{t\wedge\tau}^{n}|^2\\
\leq& C\left(\E\left[e^{2 \int_0^\tau{\tilde{\rho}_r}^+{\rm d}r}|\xi|^2\bigg|\F_{t\wedge\tau}\right]+\E\left[\left(
\int_{t\wedge\tau}^{\tau}e^{ \int_0^s{\tilde{\rho}_r}^+{\rm d}r}Ke^{-s}\bar{\alpha}_s^3{\rm d}s\right)^2\bigg|\F_{t\wedge\tau}\right]\right)\\
\leq &C\left(\E\left[e^{2 \int_0^\tau{\tilde{\rho}_r}^+{\rm d}r}K^2 \bar{\alpha}_{\tau}^2\bar{\alpha}_{t\wedge\tau}^4\bigg|\F_{t\wedge\tau}\right] +\E\left[\left(\int_{t\wedge\tau}^{\tau}e^{ \int_0^s{\tilde{\rho}_r}^+{\rm d}r}Ke^{-s}\bar{\alpha}_s\bar{\alpha}_{t\wedge\tau}^2{\rm d}s\right)^2\bigg|\F_{t\wedge\tau}\right]\right)\\
\leq &C\left(\E\left[e^{2 \int_0^\tau{\tilde{\rho}_r}^+{\rm d}r}K^2 \bar{\alpha}_{\tau}^2\bigg|\F_{t\wedge\tau}\right] +\E\left[\left(\int_{0}^{\tau}e^{ \int_0^s{\tilde{\rho}_r}^+{\rm d}r}Ke^{-s}\bar{\alpha}_s{\rm d}s\right)^2\bigg|\F_{t\wedge\tau}\right]\right)\bar{\alpha}_{t\wedge\tau}^4\\
\leq&r^2\bar{\alpha}_{t\wedge\tau}^4
\end{split}
\end{align}
with $r:=K\sqrt{2C}$. Then for each $n\geq1$, we have
\begin{align}\label{2.32}
|y_t^n|\leq r\bar{\alpha}_t^2,\ \ t\in[0,\tau].
\end{align}
By the definition of $\theta_r^{\bar{\alpha}_\cdot}$ we know that $g^n$ can be written as
$$\tilde{g}^n(t,y):=\left(g(t,y)-g(t,0)\right) \frac{ne^{-t}\bar{\alpha}_t^2}{{\overline{\psi}(t,(r+1)\bar{\alpha}_t)}\vee \left(ne^{-t}\bar{\alpha}_t^2\right)}+g(t,0), \ \ t\in[0,\tau],$$
and then $(y_t^n,z_t^n)_{t\in[0,\tau]}\in H_\tau^2({\tilde{\rho}_\cdot}^+;\R^{k}\times\R^{k\times d})$ is a weighted $L^2$ solution of BSDE $(\xi,\tau,\tilde{g}^n)$.

Next, we prove that $\{(y_\cdot^n,z_\cdot^n)\}^{+\infty}_{n=1}$ is a Cauchy sequence in the space of $H_\tau^2({\tilde{\rho}_\cdot^+};\R^{k}\times\R^{k\times d})$. For each $n,i\geq1$, let $\hat{y}_\cdot^{n,i}:=y_\cdot^{n+i}-y_\cdot^n$, $\hat{z}_\cdot^{n,i}:=z_\cdot^{n+i}-z_\cdot^n$. Then we have
\begin{align*}
\hat y_t^{n,i}=\int_t^\tau \hat {g}^{n,i}(s,\hat y_s^{n,i}){\rm d}s-\int_t^\tau\hat z_s^{n,i}{\rm d}B_s, \ \ t\in[0,\tau],
\end{align*}
where for each $y\in \R^k$, $$\hat {g}^{n,i}(s,y):=\tilde{g}^{n+i}(s,y+y_s^n)-\tilde{g}^{n}(s,y_s^n), \ s\in[0,\tau].$$
In light of the definitions of $\tilde{g}^{n}$ and $\bar\psi(\cdot,\cdot)$, assumption \ref{A:H4'} for the generator $g$ and \eqref{2.32}, by an identical argument to that in \cite[page 25, lines 7-17]{Li2025} we can derive that for each $n, i\geq1$ and $y\in \R^k$,
$$\left\langle \hat{y}, ~\hat{g}^{n,i}(t,y)\right\rangle\leq \mu_t |y|+\overline{\psi}(t,(r+1)\bar{\alpha}_t){\bf 1}_{\overline{\psi}(t,(r+1)\bar{\alpha}_t)>{{n}e^{-t}\bar{\alpha}_t^2}}, \ \ t\in[0,\tau],$$
which means that assumption (A) is satisfied by $\hat{g}^{n,i}$ with $\bar p:=2$, $\bar\mu_t:=\mu_t$, $\bar\nu_t:=0$, ${\bar\rho_t}:=\tilde{\rho}_t^+$ and
$$
f_t:=\overline{\psi}(t,(r+1)\bar{\alpha}_t){\bf 1}_{\overline{\psi}(t,(r+1)\bar{\alpha}_t)>{{n}e^{-t}\bar{\alpha}_t^2}}.
$$
On the other hand, it follows from \eqref{4.22} and \eqref{*} that for each $n\geq1$,
\begin{align}\label{4.23}
e^{2\int_0^{t}{\tilde{\rho}_r}^+{\rm d}r}|y_{t}^{n}|\leq r\bar{\alpha}_t,\ \ t\in [0,\tau].
\end{align}
In light of \eqref{4.23}, by virtue of (i) of \cref{rmk:2.5} with $r=t=0$ we can deduce that there exists a uniform constant $\overline{C}>0$ such that for each $n,i\geq1$,
\begin{align}\label{2101.1}
\begin{split}
&\E\left[\sup_{s\in[0,\tau]}\left(e^{2 \int_0^s{\tilde{\rho}_r^+}{\rm d}r}|\hat{y}_s^{n,i}|^2\right)\right]+\E\left[\int_{0}^{\tau}e^{2 \int_0^s{\tilde{\rho}_r^+}{\rm d}r}|\hat{z}_s^{n,i}|^2{\rm d}s\right]\\
&\ \ \leq \overline{C}\E\left[
\int_{0}^{\tau}e^{2 \int_0^s{\tilde{\rho}_r^+}{\rm d}r}|\hat{y}_s^{n,i}|~\overline{\psi}(s,(r+1)\bar{\alpha}_s){\bf 1}_{\overline{\psi}(s,(r+1)\bar{\alpha}_s)>{{n}e^{-s}\bar{\alpha}_s^2}}{\rm d}s\right]\\
&\ \ \leq 2r\overline{C}\E\left[
\int_{0}^{\tau}\bar{\alpha}_s\overline{\psi}(s,(r+1)\bar{\alpha}_s){\bf 1}_{\overline{\psi}(s,(r+1)\bar{\alpha}_s)>{{n}e^{-s}\bar{\alpha}_s^2}}{\rm d}s\right].
\end{split}
\end{align}
Moreover, by \ref{A:H3'} and Lebesgue's dominated convergence theorem we have
$$
\lim\limits_{n\rightarrow\infty}\E\left[
\int_{0}^{\tau}\bar{\alpha}_s\overline{\psi}(t,(r+1)\bar{\alpha}_s){\bf 1}_{\overline{\psi}(t,(r+1)\bar{\alpha}_s)>{{n}e^{-s}\bar{\alpha}_s^2}}{\rm d}s\right]=0.
$$
Thus, by taking first the supremum with respect to $i$ and then the upper limit with respect to $n$ on both sides of \eqref{2101.1}, we derive that $\{(y_\cdot^n,z_\cdot^n)\}^{+\infty}_{n=1}$ is a Cauchy sequence in the space of $H_\tau^2({\tilde{\rho}_\cdot^+};\R^{k}\times\R^{k\times d})$.

It can be seen from the previous proof that in order to tackle with the more general growth of the generator $g$ (namely, \ref{A:H3'}), we adopt a new truncation for $\xi$ and $g(t,0)$ (see \eqref{22.6}), construct a new auxiliary function $\theta_{r}^{\bar{\alpha}_\cdot}$ (see \eqref{Matrix}), and then obtain the uniform estimates \eqref{2.32} and \eqref{4.23} with respect to the process $|y^n_\cdot|$. These are different from those used in \citet{Li2025}.

Finally, we denote by $(y_t,z_t)_{t\in[0,\tau]}$ the limit of the Cauchy sequence $\{(y_t^n,z_t^n)_{t\in[0,\tau]}\}_{n=1}^\infty$ in the space of $H_\tau^2({\tilde{\rho}^+_\cdot};\R^{k}\times\R^{k\times d})$ and show in detail that $(y_\cdot,z_\cdot)$ solves BSDE \eqref{BSDE2.01}. First, since $y^n_\cdot\rightarrow y_\cdot$ as $n\To \infty$ in the space of $S_\tau^2({\tilde{\rho}^+_\cdot};\R^{k})$, passing to a subsequence if needed, still denoted by $y^n_\cdot$, we know that
\begin{align}\label{2.02}
\lim\limits_{n\rightarrow\infty}\sup_{t\in[0,\tau]}|y_t^{n}-y_t|=0.
\end{align}
Then, for almost all $\omega,$ there exists a constant $M_1(\omega)>0$ depending only on $\omega$ such that
$$
\sup_{n\geq1} \sup_{t\in[0,\tau]}|y_t^{n}-y_t|\leq M_1,
$$
which yields the existence of a constant $M_2(\omega)>0$ depending only on $\omega$ such that
\begin{align}\label{2.011}
\sup_{n\geq1} \sup_{t\in[0,\tau]}|y_t^{n}|\leq M_2=M_3 \essinf_{t\in[0,\tau]}\bar{\alpha}_t\leq M_3\bar{\alpha}_t
\end{align}
with $M_3\triangleq \frac{M_2}{\essinf\limits_{t\in[0,\tau]}\bar{\alpha}_t}>0$ being a constant depending only on $\omega$ because of $\essinf\limits_{t\in[0,\tau]}\bar{\alpha}_t>0$. Next, since $z^n_\cdot\rightarrow z_\cdot$ as $n\To \infty$ in $M_\tau^2({\tilde{\rho}^+_\cdot};\R^{k\times d})$, passing to a subsequence if needed, still denoted by $z^n_\cdot$, we obtain
$$
\lim\limits_{n\rightarrow\infty}\E\left[\bigg|\int_0^\tau (z^n_s-z_s){\rm d}B_s\bigg|^2\right]=\lim\limits_{n\rightarrow\infty}\E\left[\int_0^\tau |z^n_s-z_s|^2{\rm d}s\right]=0,
$$
and then
\begin{align}\label{2.04}
\bigg|\int_0^\tau z^n_s{\rm d}B_s-\int_0^\tau z_s{\rm d}B_s\bigg|\rightarrow0, \ \ \ n\rightarrow \infty.
\end{align}
Furthermore, it follows from the definition of $\tilde{g}^n$ and \ref{A:H2} that for each $t\in[0,\tau],$
\begin{align}\label{2.05}
\tilde{g}^n(t,y_t^n)\rightarrow g(t,y_t), \ \ \ n\rightarrow \infty.
\end{align}
And, by virtue of the definition of $\tilde{g}^n$ and \eqref{2.011} we deduce that for each $n\geq1$,
\begin{align}\label{2.07}
|\tilde{g}^n(t,y_t^n)|\leq |g(t,y_t^n)-g(t,0)|+|g(t,0)|\leq \overline{\psi}(t,M_3\bar{\alpha}_t)+|g(t,0)|,\ \ t\in [0,\tau],
\end{align}
and then by virtue of \ref{A:H3'} and  $\essinf\limits_{t\in[0,\tau]}\bar{\alpha}_t>0$,
\begin{align}\label{2.08}
\begin{split}
\int_0^\tau \overline{\psi}(t,M_3\bar{\alpha}_t){\rm d}t&=\int_0^\tau \frac{1}{\bar{\alpha}_t} \bar{\alpha}_t\overline{\psi}(t,M_3\bar{\alpha}_t){\rm d}t\\
&\leq \frac{1}{\essinf\limits_{t\in[0,\tau]}\bar{\alpha}_t} \int_0^\tau \bar{\alpha}_t\overline{\psi}(t,M_3\bar{\alpha}_t){\rm d}t<+\infty.
\end{split}
\end{align}
Thus, it follows from \eqref{22.6}, \eqref{2.05}-\eqref{2.08} and Lebesgue's dominated convergence theorem that
\begin{align}\label{2.09}
\int_0^\tau \tilde{g}^n(t,y_t^n){\rm d}t\rightarrow \int_0^\tau g(t,y_t){\rm d}t, \ \ \ n\rightarrow \infty.
\end{align}
Finally, in light of \eqref{2.02}, \eqref{2.04}, \eqref{2.09}, passing to the limit under the uniform convergence in probability (ucp for short) for BSDE $(\xi,\tau,\tilde{g}^n)$, we can conclude that $(y_t,z_t)_{t\in[0,\tau]}$ solves BSDE \eqref{BSDE2.01}.\vspace{0.2cm}

{\bf Third Step:} We remove the truncation condition \eqref{22.6} and prove the existence of a weighted $L^2$ solution to BSDE \eqref{BSDE2.01} in the space of $H_\tau^2({\tilde{\rho}_\cdot};\R^{k}\times\R^{k\times d})$ under \ref{A:H1'}-\ref{A:H4'} and $\xi\in L_\tau^2(\tilde{\rho}_\cdot;\R^{k})$.

For each $x\in\R^k$, $r>0$ and $n\geq1$, let $q_r(x):=\frac{xr}{|x|\vee r}$, $\xi_n:=q_{n\bar{\alpha}_{\tau}^3} (\xi)$ and
\begin{align}\label{3.1}
\overline{g}_n(t,y):=g(t,y)-g(t,0)+q_{ne^{-t} \bar{\alpha}_t^3}(g(t,0)), \ \ t\in[0,\tau].
\end{align}
Then, for each $n\geq1$ we have
\begin{align}\label{1.13}
|\xi_n|\leq n\bar{\alpha}_{\tau}^3\ \ \ {\rm and}\ \  \ |\overline{g}_n(t,0)|\leq ne^{-t}\bar{\alpha}_{t}^3, \ \ t\in[0,\tau].
\end{align}
By combining \eqref{*}, \eqref{1.13} and assumptions \ref{A:H2'}-\ref{A:H4'} of $g$, we know that for each $n\geq1$, $\xi_n$ and $\overline{g}_n$ satisfy all assumptions in the second step. Then, BSDE $(\xi_n,\tau,\overline{g}_n)$ admits a weighted $L^2$ solution in the space of $H_\tau^2({\tilde{\rho}_\cdot^+};\R^{k}\times\R^{k\times d}) \subseteq H_\tau^2({\tilde{\rho}_\cdot};\R^{k}\times\R^{k\times d})$ for each $n\geq1$, denoted by $(y_t^n,z_t^n)_{t\in[0,\tau]}$.

In the sequel, for each pair of integers $n, i\geq1$, let
$$\hat{\xi}^{n,i}:=\xi_{n+i}-\xi_n, \  \ \hat y_\cdot^{n,i}:=y_\cdot^{n+i}-y_\cdot^n, \ \ \hat z_\cdot^{n,i}:=z_\cdot^{n+i}-z_\cdot^n.$$
Then
\begin{align*}
\hat y_t^{n,i}=\hat{\xi}^{n,i}+\int_t^\tau\hat{g}^{n,i}(s,\hat y_s^{n,i}){\rm d}s-\int_t^\tau\hat z_s^{n,i}{\rm d}B_s, \ \ t\in[0,\tau],
\end{align*}
where for each $y\in \R^k$, $$\hat{g}^{n,i}(s,y):=\overline{g}_{n+i}(s,y+y_s^n)-\overline{g}_n(s,y_s^n), \ s\in[0,\tau].$$
It follows from \ref{A:H4'} that for each $y\in \R^k$,
\begin{align*}
\begin{split}
\left<\hat{y},~\hat{g}^{n,i}(t,y)\right>&=\left<\hat{y}, ~\overline{g}_{n+i}(t,y+y_t^n)-\overline{g}_{n+i}(t,y_t^n)+ \overline{g}_{n+i}(t,y_t^n)-\overline{g}_n(t,y_t^n)\right>\\
&\leq \mu_t|y| +\bigg|q_{(n+i)e^{-t}\bar{\alpha}_{t}^3}(g(t,0)) -q_{ne^{-t}\bar{\alpha}_{t}^3}(g(t,0))\bigg|\\
&\leq \mu_t|y|+|g(t,0)|{\bf 1}_{|g(t,0)|>ne^{-t}\bar{\alpha}_{t}^3}, \ \ t\in[0,\tau],
\end{split}
\end{align*}
which means that $\hat{g}^{n,i}$ satisfies assumption \ref{A:A} with $\bar{p}:=2$, $\bar\mu_t:={\mu}_t$, $\bar\nu_t:=0$, ${\bar\rho_t}:=\tilde{\rho}_t$ and
$$
f_t=|g(t,0)|{\bf 1}_{|g(t,0)|>ne^{-t}\bar{\alpha}_{t}^3}.
$$
Then, by \cref{pro2.1} with $r=t=0$ we can deduce that there exists a uniform constant $C>0$ such that for each $n, \ i\geq1$,
\begin{align}\label{3.6}
\begin{split}
&\E\left[\sup_{s\in[0,\tau]}\left(e^{2 \int_0^s{\tilde{\rho}_r}{\rm d}r}|\hat{y}_s^{n,i}|^2\right)\right]+\E\left[\int_{0}^{\tau}e^{2 \int_0^s{\tilde{\rho}_r}{\rm d}r}|\hat{z}_s^{n,i}|^2{\rm d}s\right]\\
&\ \ \leq C\E\left[e^{2 \int_0^\tau {\tilde{\rho}_r}{\rm d}r}|\xi|^2{\bf 1}_{|\xi|>n\bar{\alpha}_{\tau}^3}+\left(\int_0^\tau e^{ \int_0^s{\tilde{\rho}_r}{\rm d}r}|g(s,0)|{\bf 1}_{|g(s,0)|>ne^{-s}\bar{\alpha}_{s}^3}{\rm d}s\right)^2\right].
\end{split}
\end{align}
According to \ref{A:H1'} and Lebesgue's dominated convergence theorem, by taking first the supremum with respect to $i$ and then the upper limit with respect to $n$ on both sides of \eqref{3.6}, we can conclude that $\{(y_\cdot^n,z_\cdot^n)\}^{+\infty}_{n=1}$ is a Cauchy sequence in the space of $H_\tau^2({\tilde{\rho}_\cdot};\R^{k}\times\R^{k\times d})$.

Finally, we denote by $(y_t,z_t)_{t\in[0,\tau]}$ the limit of the Cauchy sequence $\{(y_t^n,z_t^n)_{t\in[0,\tau]}\}_{n=1}^\infty$ in the space of $H_\tau^2({\tilde{\rho}_\cdot};\R^{k}\times\R^{k\times d})$ and show that $(y_t,z_t)_{t\in[0,\tau]}$ solves BSDE \eqref{BSDE2.01}. First, since $y^n_\cdot\rightarrow y_\cdot$ as $n\To \infty$ in the space of $S_\tau^2({\tilde{\rho}_\cdot};\R^{k})$, passing to a subsequence if needed, still denoted by $y^n_\cdot$, we have
$$
\lim\limits_{n\rightarrow\infty}\sup_{t\in[0,\tau]}\left(e^{\int_0^t
\tilde{\rho}_r{\rm d}r}|y_t^{n}-y_t|\right)=0.
$$
Observe by $\int_0^\tau |\tilde{\rho}_r|{\rm d}r<+\infty$ that for each $n\geq1$,
\begin{align}\label{3.61}
\begin{split}
\sup_{t\in[0,\tau]}|y_t^{n}-y_t|&=\sup_{t\in[0,\tau]}\left(e^{\int_0^t
\tilde{\rho}_r{\rm d}r}|y_t^{n}-y_t|~ e^{-\int_0^t\tilde{\rho}_r{\rm d}r}\right)\\
&\leq e^{\int_0^\tau|\tilde{\rho}_r|{\rm d}t} \sup_{t\in[0,\tau]}\left(e^{\int_0^t\tilde{\rho}_r{\rm d}r}|y_t^{n}-y_t|\right).
\end{split}
\end{align}
We know that \eqref{2.02} and \eqref{2.011} still hold. Next, since $z^n_\cdot\rightarrow z_\cdot$ as $n\To \infty$ in $M_\tau^2({\tilde{\rho}_\cdot};\R^{k\times d})$, passing to a subsequence if needed, still denoted by $z^n_\cdot$, we obtain
$$
\int_0^\tau e^{2\int_0^s\tilde{\rho}_r{\rm d}r}|z^n_s|^2{\rm d}s<+\infty, \ \ \int_0^\tau e^{2\int_0^s\tilde{\rho}_r{\rm d}r}|z_s|^2{\rm d}s<+\infty
$$
and $$\lim\limits_{n\rightarrow\infty}\int_0^\tau e^{2\int_0^s\tilde{\rho}_r{\rm d}r}|z^n_s-z_s|^2{\rm d}s=0.$$
Note that for each $n\geq1$,
\begin{align*}
\int_0^\tau |z^n_s-z_s|^2{\rm d}s&=\int_0^\tau e^{-2\int_0^s\tilde{\rho}_r{\rm d}r}~ e^{2\int_0^s\tilde{\rho}_r{\rm d}r}|z^n_s-z_s|^2{\rm d}s\\
&\leq e^{2\int_0^\tau|\tilde{\rho}_r|{\rm d}t} \int_0^\tau e^{2\int_0^s\tilde{\rho}_r{\rm d}r}|z^n_s-z_s|^2{\rm d}s.
\end{align*}
We have $\lim\limits_{n\rightarrow\infty}\int_0^\tau |z^n_s-z_s|^2{\rm d}s=0$ and
$$
\int_0^\tau |z^n_s|^2{\rm d}s<+\infty, \ \ \int_0^\tau |z_s|^2{\rm d}s<+\infty.
$$
Then, by Proposition 2.26 in Section 3.2 of \citet{Karatzas1991} we know that
\begin{align}\label{0.22}
\bigg|\int_0^\tau z^n_s{\rm d}B_s-\int_0^\tau z_s{\rm d}B_s\bigg|\rightarrow0, \ \ \ n\rightarrow \infty.
\end{align}
Furthermore, it follows from \ref{A:H2} and \eqref{3.1} that for each $t\in [0,\tau],$
\begin{align}\label{0.23}
\overline{g}_n(t,y_t^n)\rightarrow g(t,y_t), \ \ \ n\rightarrow \infty.
\end{align}
And, by \eqref{2.011} and \eqref{3.1} we can deduce that for each $n\geq1$ and $t\in [0,\tau],$
\begin{align}\label{0.24}
|\overline{g}_n(t,y_t^n)|\leq |g(t,y_t^n)-g(t,0)|+|g(t,0)|\leq \overline{\psi}(t,M_3\bar{\alpha}_t)+|g(t,0)|.
\end{align}
In addition, by \ref{A:H1'} and \ref{A:H3'} along with $\essinf\limits_{t\in[0,\tau]}\bar{\alpha}_t>0$ and $\int_0^\tau |\tilde{\rho}_r|{\rm d}r<+\infty$, we get \eqref{2.08} and
\begin{align}\label{0.26}
\begin{split}
\int_0^\tau |g(t,0)|{\rm d}t&=\int_0^\tau e^{-\int_0^t\tilde{\rho}_r{\rm d}r}~e^{\int_0^t\tilde{\rho}_r{\rm d}r}|g(t,0)|{\rm d}t\\
&\leq e^{\int_0^\tau|\tilde{\rho}_r|{\rm d}t} \int_0^\tau e^{\int_0^t\tilde{\rho}_r{\rm d}r}|g(t,0)|{\rm d}t<+\infty.
\end{split}
\end{align}
Thus, it follows from \eqref{2.08}, \eqref{0.23}-\eqref{0.26} and Lebesgue's dominated convergence theorem that
\begin{align}\label{0.27}
\int_0^\tau \overline{g}_n(t,y_t^n){\rm d}t\rightarrow \int_0^\tau g(t,y_t){\rm d}t, \ \ \ n\rightarrow \infty.
\end{align}
Finally, in light of \eqref{2.02}, \eqref{0.22}, \eqref{0.27}, passing to the limit under ucp for BSDE $(\xi_n,\tau,\overline{g}_n)$, we can conclude that $(y_t,z_t)_{t\in[0,\tau]}$ solves BSDE \eqref{BSDE2.01}. Thus, the proof of the first three steps has been completed.\vspace{0.2cm}

We now turn to consider the general case that $g$ can depend on $z$ and prove that the result of the existence part of \cref{thm:3.1} holds for the case of $p=2$. This is done in the subsequent two steps.\vspace{0.2cm}

{\bf Fourth Step:} In this step, under assumptions \ref{A:H1}-\ref{A:H5} of the generator $g$ with $p=2$ and $\rho_\cdot\geq\mu_\cdot+\frac{\theta}{2}\nu_\cdot^2$, we use the assertion of the third step to prove that for each $\xi\in L_\tau^2(\rho_\cdot;\R^k)$ and $V_\cdot\in M_\tau^2(\rho_\cdot;\R^{k\times d})$, the following BSDE
\begin{align}\label{BSDE3.1}
  y_t=\xi+\int_t^\tau g(s,y_s,V_s){\rm d}s-\int_t^\tau z_s{\rm d}B_s, \ \ t\in[0,\tau].
\end{align}
admits a unique weighted $L^2$ solution in the space of $H_\tau^2(\rho_\cdot;\R^{k}\times\R^{k\times d})$.

We first prove that the generator $g(t,y,V_t)$ satisfies assumptions \ref{A:H1'}-\ref{A:H4'} with $\rho_\cdot-\frac{\theta}{2}{\nu_\cdot}^2$ instead of $\tilde{\rho}_\cdot$. In fact, it is clear that $g(t,y,V_t)$ satisfies \ref{A:H2'} and \ref{A:H4'}. Moreover, by \ref{A:H1} and \ref{A:H5} of the generator $g$ and H\"{o}lder's inequality we derive that
\begin{align}\label{0.35}
&\E\left[\left(\int_0^\tau e^{ \int_0^s(\rho_r-\frac{\theta}{2}\nu_r^2){\rm d}r}|g(s,0,V_s)|{\rm d}s\right)^2\right]\leq\E\left[\left(\int_0^\tau e^{ \int_0^s(\rho_r-\frac{\theta}{2}\nu_r^2){\rm d}r}\left(|g(s,0,0)|+{\nu}_s|V_s|\right){\rm d}s\right)^2\right]\nonumber\\
&\ \ \leq 2\E\left[\left(\int_0^\tau e^{ \int_0^s{\rho}_r{\rm d}r}|g(s,0,0)|{\rm d}s\right)^2\right]+2\E\left[\left(\int_0^\tau e^{ \int_0^s{\rho}_r{\rm d}r}|V_s|e^{- \int_0^s\frac{\theta}{2} {\nu}_r^2{\rm d}r}{\nu}_s{\rm d}s\right)^2\right]\nonumber\\
&\ \ \leq 2\E\left[\left(\int_0^\tau e^{\int_0^s{\rho}_r{\rm d}r}|g(s,0,0)|{\rm d}s\right)^2\right]+2\E\left[\int_0^\tau e^{2 \int_0^s{\rho}_r{\rm d}r}|V_s|^2{\rm d}s\right]\E\left[\int_0^\tau e^{-\int_0^s\theta {\nu}_r^2{\rm d}r}{\nu}_s^2{\rm d}s\right]\nonumber\\
&\ \ \leq 2\E\left[\left(\int_0^\tau e^{\int_0^s{\rho}_r{\rm d}r}|g(s,0,0)|{\rm d}s\right)^2\right]+\frac{2}{\theta} \E\left[\int_0^\tau e^{2 \int_0^s{\rho}_r{\rm d}r}|V_s|^2{\rm d}s\right]<+\infty.
\end{align}
Hence, \ref{A:H1'} is true for $g(t,y,V_t)$. It follows from \ref{A:H5} that for each $r\in \R_+$ and $t\in[0,\tau]$,
\begin{align*}
\begin{split}
\overline{\psi}(t,r):&=\sup_{|y|\leq r} \left|g(t,y,V_t)-g(t,0,V_t)\right|\\
&=\sup_{|y|\leq r} \left|g(t,y,V_t)-g(t,y,0)+g(t,y,0)-g(t,0,0)+g(t,0,0)-g(t,0,V_t)\right|\\
&\leq2\nu_t|V_t|+\sup_{|y|\leq r}\left|g(t,y,0)-g(t,0,0)\right|=2\nu_t|V_t|+\psi(t,r),\\
\end{split}
\end{align*}
and then by taking $\bar{\alpha}_t:=\alpha_t \wedge e^{ \int_0^t(\rho_r-\frac{\theta}{2}\nu_r^2){\rm d}r}$ and
combining \eqref{0.35} and \ref{A:H3} of the generator $g$, we have
\begin{align*}
\begin{split}
\E\left[\int_0^\tau \bar{\alpha}_t\overline{\psi}(t,r\bar{\alpha}_t)dt\right]&\leq
\E\left[\int_0^\tau \bar{\alpha}_t(2\nu_t|V_t|+\psi(t,r\bar{\alpha}_t))\dif t\right]\\
&\leq
\E\left[2\int_0^\tau e^{ \int_0^t(\rho_r-\frac{\theta}{2}\nu_r^2){\rm d}r}\nu_t|V_t|\dif t+\int_0^\tau \alpha_t \psi(t,r\alpha_t)\dif t\right]<+\infty.
\end{split}
\end{align*}
So \ref{A:H3'} is also true for $g(t,y,V_t)$. Thus, according to the result of the third step with $\tilde{\rho}_\cdot=\rho_\cdot-\frac{\theta}{2} {\nu}_\cdot^2\geq\mu_\cdot$, we can conclude that BSDE \eqref{BSDE3.1} admits a weighted $L^2$ solution $(y_t,z_t)_{t\in[0,\tau]}$ in the space of
$$
H_\tau^2(\rho_\cdot-\frac{\theta}{2} {\nu}_\cdot^2;\R^{k}\times\R^{k\times d}).\vspace{-0.1cm}
$$

In the sequel, we prove that $(y_t,z_t)_{t\in[0,\tau]}$ belongs to $H_\tau^2(\rho_\cdot;\R^{k}\times\R^{k\times d})$. First of all, applying It\^{o}-Tanaka's formula to $e^{\int_{0}^{t}(\rho_r-\frac{\theta}{2} {\nu}_r^2){\rm d}r}|y_t|$ yields that for each $t\geq0,$
\begin{align}\label{0.37}
\begin{split}
e^{\int_{0}^{t\wedge\tau}(\rho_r-\frac{\theta}{2} {\nu}_r^2){\rm d}r}|y_{t\wedge\tau}|
\leq & e^{\int_{0}^{\tau}(\rho_r-\frac{\theta}{2} {\nu}_r^2){\rm d}r}|\xi|+\int_{t\wedge\tau}^{\tau} e^{\int_{0}^{s}(\rho_r-\frac{\theta}{2} {\nu}_r^2){\rm d}r}\left<\hat{y}_s,~g(s,y_s,V_s)\right>{\rm d}s\\
&-\int_{t\wedge\tau}^{\tau} e^{\int_{0}^{s}(\rho_r-\frac{\theta}{2} {\nu}_r^2){\rm d}r}(\rho_s-\frac{\theta}{2} {\nu}_s^2)|y_s|{\rm d}s-\int_{t\wedge\tau}^{\tau} e^{ \int_{0}^{s}(\rho_r-\frac{\theta}{2} {\nu}_r^2){\rm d}r}\langle \hat{y}_s,z_s{\rm d}B_s\rangle.
\end{split}
\end{align}
Using assumptions \ref{A:H4} and \ref{A:H5} we obtain
\begin{align}\label{0.38}
\begin{split}
\left<\hat{y}_t,~g(t,y_t,V_t)\right>-(\rho_t-\frac{\theta}{2} {\nu}_t^2)|y_t|\leq \nu_t|V_t|+|g(t,0,0)|, \ \ t\in[0,\tau].
\end{split}
\end{align}
In light of $(y_t,z_t)_{t\in[0,\tau]}\in H_\tau^2(\rho_\cdot-\frac{\theta}{2} {\nu}_\cdot^2;\R^{k}\times\R^{k\times d}),$ BDG's inequality yields that
$$\left(\int_0^{t\wedge\tau} e^{ \int_{0}^{s}(\rho_r-\frac{\theta}{2} {\nu}_r^2){\rm d}r}\langle \hat{y}_s,z_s{\rm d}B_s\rangle\right)_{t\geq0}$$
is a uniformly integrable martingale. In fact, in view of Theorem 1 in \citet{Ren2008BDG}, we know that
\begin{align}\label{2.06*}
\begin{split}
&\E\left[\sup_{t\in[0,\tau]}\left|\int_0^{t} e^{ \int_{0}^{s}(\rho_r-\frac{\theta}{2} {\nu}_r^2){\rm d}r}\langle \hat{y}_s,z_s{\rm d}B_s\rangle\right|\right]
\leq 2\sqrt{2}\E\left[\left(\int_0^\tau e^{2 \int_{0}^{s}(\rho_r-\frac{\theta}{2} {\nu}_r^2){\rm d}r}|z_s|^2{\rm d}s\right)^{\frac{1}{2}}\right]<+\infty.
\end{split}
\end{align}
It follows from \eqref{0.37}, \eqref{0.38} and \eqref{2.06*} that for each $t\geq0$,
\begin{align}\label{0.39}
\begin{split}
&e^{\int_{0}^{t\wedge\tau}(\rho_r-\frac{\theta}{2} {\nu}_r^2){\rm d}r}|y_{t\wedge\tau}|\\
&\ \ \leq \E\left[e^{\int_{0}^{\tau}(\rho_r-\frac{\theta}{2} {\nu}_r^2){\rm d}r}|\xi|\bigg|\F_{t\wedge\tau}\right] +\E\left[\int_{t\wedge\tau}^{\tau}e^{\int_{0}^{s}(\rho_r-\frac{\theta}{2} {\nu}_r^2){\rm d}r}\left(\nu_s|V_s|+|g(s,0,0)|\right){\rm d}s\bigg|\F_{t\wedge\tau}\right].
\end{split}
\end{align}
Multiplying $e^{\int_{0}^{t\wedge\tau}\frac{\theta}{2}\nu_r^2{\rm d}r}$ on both sides of \eqref{0.39}, we get that for each $t\geq0$,
\begin{align}\label{0.40}
\begin{split}
&e^{\int_{0}^{t\wedge\tau}\rho_r{\rm d}r}|y_{t\wedge\tau}|\\
&\ \ \leq \E\left[e^{\int_{0}^{\tau} \rho_r{\rm d}r}|\xi|\bigg|\F_{t\wedge\tau}\right]
+\E\left[\int_{t\wedge\tau}^{\tau} e^{\int_{0}^{s}(\rho_r-\frac{\theta}{2} {\nu}_r^2){\rm d}r}\left(\nu_s|V_s|+|g(s,0,0)|\right){\rm d}s~e^{\int_{0}^{t\wedge\tau}\frac{\theta}{2}\nu_r^2{\rm d}r}\bigg|\F_{t\wedge\tau}\right]\\
&\ \ \leq \E\left[e^{\int_{0}^{\tau} \rho_r{\rm d}r}|\xi|\bigg|\F_{t\wedge\tau}\right]+\E\left[\int_0^\tau e^{\int_{0}^{s} \rho_r{\rm d}r}|g(s,0,0)|{\rm d}s\bigg|\F_{t\wedge\tau}\right]\\
&\hspace{0.6cm}+\E\left[\sqrt{\int_{t\wedge\tau}^{\tau}e^{2\int_{0}^{s} \rho_r{\rm d}r}|V_s|^2{\rm d}s\int_{t\wedge\tau}^{\tau}e^{-\int_{0}^{s} \theta \nu_r^2{\rm d}r}\nu_s^2{\rm d}s~e^{\int_{0}^{t\wedge\tau} \theta\nu_r^2{\rm d}r}}\bigg|\F_{t\wedge\tau}\right]\\
&\ \ \leq \E\left[e^{\int_{0}^{\tau} \rho_r{\rm d}r}|\xi|\bigg|\F_{t\wedge\tau}\right]+\E\left[\int_0^\tau e^{\int_{0}^{s} \rho_r{\rm d}r}|g(s,0,0)|{\rm d}s\bigg|\F_{t\wedge\tau}\right]\\
&\hspace{0.6cm}+\E\left[\sqrt{\frac{1}{\theta} \int_{0}^{\tau}e^{2\int_{0}^{s} \rho_r{\rm d}r}|V_s|^2{\rm d}s}\bigg|\F_{t\wedge\tau}\right].
\end{split}
\end{align}
Then, in light of \eqref{0.40} along with the assumptions of $\xi$, $V_\cdot$, and $g(t,0,0)$, using Doob's martingale inequality on submartingales yields that
$$\E\left[\sup_{t\in[0,\tau]}\left(e^{2\int_0^t \rho_r{\rm d}r}|y_t|^2\right)\right]<+\infty.$$
Thus, $y_\cdot\in S_\tau^2(\rho_\cdot;\R^{k})$. Furthermore, in light of \ref{A:H4} and \ref{A:H5}, by a similar analysis to that in the proof of \cref{pro2.01} with $r=t=0$, we can derive that for each $n\geq1,$
\begin{align}\label{32.82}
\E\left[\int_{0}^{\tau_n}e^{2 \int_{0}^{s}\rho_r{\rm d}r}|z_s|^2{\rm d}s\right]
\leq&
\E\left[\sup_{s\in[0,\tau_n]}\left(e^{2 \int_{0}^{s}\rho_r{\rm d}r}|y_s|^2\right)\right] +\frac{1}{\theta} \E\left[\int_{0}^{\tau_n}e^{2\int_0^s \rho_r{\rm d}r}|V_s|^2{\rm d}s\right]\nonumber\\
&+2\E\left[\int_{0}^{\tau_n}e^{2\int_0^s \rho_r{\rm d}r}|y_s||g(s,0,0)|{\rm d}s\right]\nonumber\\
\leq&
2\E\left[\sup_{s\in[0,\tau]}\left(e^{2\int_0^s \rho_r{\rm d}r}|y_s|^2\right)\right]+\frac{1}{\theta} \E\left[\int_{0}^{\tau}e^{2\int_0^s \rho_r{\rm d}r}|V_s|^2{\rm d}s\right]\nonumber\\
&+\E\left[\left(\int_{0}^{\tau}e^{\int_0^s \rho_r{\rm d}r}|g(s,0,0)|{\rm d}s\right)^2\right]
\end{align}
with
$$\tau_{n}:=\inf \left\{t \geq0: \int_{0}^{t}e^{2 \int_{0}^{s}\rho_r{\rm d}r}|z_s|^2 {\rm d}s \geq n\right\}\wedge \tau.$$
Finally, by taking the limit with respect to $n$ in the last inequality \eqref{32.82}, the use of Levi's lemma yields $z_\cdot\in M_\tau^2(\rho_\cdot;\R^{k\times d})$. Hence, $(y_\cdot,z_\cdot)\in H_\tau^2(\rho_\cdot;\R^{k}\times\R^{k\times d})$ and the proof of the fourth step is then complete.\vspace{0.2cm}

{\bf Fifth Step:} In this step, under assumptions \ref{A:H1}-\ref{A:H5} with $p=2$ and $\rho_\cdot\geq\mu_\cdot+\frac{\theta}{2}\nu_\cdot^2$, we use the conclusion of the fourth step and the fixed point theorem to prove that for each $\xi\in L_\tau^2(\rho_\cdot;\R^k)$, the BSDE \eqref{BSDE1.1} admits a weighted $L^2$ solution $(y_t,z_t)_{t\in[0,\tau]}$ in the space of $H_\tau^2(\rho_\cdot;\R^{k}\times\R^{k\times d})$.

Based on the fourth step, we know that for any given $V_\cdot\in M_\tau^2(\rho_\cdot;\R^{k\times d})$, BSDE \eqref{BSDE3.1} admits a unique weighted $L^2$ solution $(y_t,z_t)_{t\in[0,\tau]}$ in the space of $S_\tau^2(\rho_\cdot;\R^{k})\times M_\tau^2(\rho_\cdot;\R^{k\times d})$. Thus, by assigning $z_\cdot$ in the above solution to be the image of $V_\cdot$ we can construct a mapping
\begin{align*}
\begin{split}
\Phi: M_\tau^2(\rho_\cdot;\R^{k\times d})&\rightarrow M_\tau^2(\rho_\cdot;\R^{k\times d})\\
V_\cdot&\rightarrow z_\cdot.
\end{split}
\end{align*}
Now, suppose that for each $i=1,2$, $V_\cdot^i\in M_\tau^2(\rho_\cdot;\R^{k\times d})$ and $z_\cdot^i:=\Phi(V_\cdot^i)$. Denote
\begin{align*}
\begin{split}
&\overline{V}_t:=V_t^1-V_t^2, \ \ \overline{y}_t:=y_t^1-y_t^2, \ \ \overline{z}_t:=z_t^1-z_t^2, \ \ \ t\in [0,\tau].
\end{split}
\end{align*}
Applying It\^{o}'s formula to $e^{2 \int_{0}^{t}\rho_r{\rm d}r}|\overline{y}_t|^2$ and using \ref{A:H4} and \ref{A:H5}, we deduce that
\begin{align}\label{33.9}
\begin{split}
&|\overline{y}_0|^2+\int_0^{\tau} e^{2\int_0^s \rho_r{\rm d}r}|\overline{z}_s|^2{\rm d}s+2\int_0^{\tau} e^{2 \int_0^s \rho_r{\rm d}r}\rho_s|\overline{y}_s|^2{\rm d}s\\
&\ \ =2\int_0^{\tau} e^{2\int_0^s \rho_r{\rm d}r}\langle \overline{y}_s, ~g(s,y_s^1,V_s^1)-g(s,y_s^2,V_s^2)\rangle{\rm d}s-2\int_0^{\tau} e^{2\int_0^s \rho_r{\rm d}r}\langle \overline{y}_s,\overline{z}_s{\rm d}B_s\rangle\\
&\ \ \leq \int_0^{\tau} e^{2\int_0^s \rho_r{\rm d}r}\left(2\mu_s|\overline{y}_s|^2+2\nu_s|\overline{y}_s||\overline{V}_s|
\right){\rm d}s-2\int_0^{\tau} e^{2\int_0^s \rho_r{\rm d}r}\langle \overline{y}_s,\overline{z}_s{\rm d}B_s\rangle.
\end{split}
\end{align}
It follows from $2ab\leq \theta a^2+\frac{1}{\theta} b^2$ that
\begin{align}\label{3.9}
2\nu_s|\overline{y}_s||\overline{V}_s|\leq\theta \nu^2_s|\overline{y}_s|^2+\frac{1}{\theta}|\overline{V}_s|^2,\ \ s\in[0,\tau].
\end{align}
In light of \eqref{33.9}, \eqref{3.9} and the inequality $2\rho_s-2\mu_s-\theta{\nu_s}^2\geq0,\ s\in [0,\tau]$, we have
\begin{align}\label{4.21}
\int_0^{\tau} e^{2\int_0^s \rho_r{\rm d}r}|\overline{z}_s|^2{\rm d}s
\leq \frac{1}{\theta}\int_0^{\tau} e^{2\int_0^s \rho_r{\rm d}r}|\overline{V}_s|^2{\rm d}s-2\int_0^{\tau} e^{2\int_0^s \rho_r{\rm d}r}\langle \overline{y}_s,\overline{z}_s{\rm d}B_s\rangle.
\end{align}
Furthermore, it follows from BDG's inequality that
$$
\left(\int_0^{t\wedge\tau} e^{2\int_0^s \rho_r{\rm d}r}\langle \overline{y}_s,\overline{z}_s{\rm d}B_s\rangle\right)_{t\geq0}
$$
is a uniformly integrable martingale. Indeed, by Theorem 1 in \citet{Ren2008BDG} we know that
\begin{align*}
\begin{split}
&2\E\left[\sup_{t\in[0,\tau]}\left|\int_0^{t} e^{2\int_0^s \rho_r{\rm d}r}\langle \overline{y}_s,\overline{z}_s{\rm d}B_s\rangle\right|\right]
\leq 4\sqrt{2}\E\left[\left(\int_0^\tau e^{4 \int_{0}^{s}\rho_r{\rm d}r}|\overline{y}_s|^2|\overline{z}_s|^2{\rm d}s\right)^{\frac{1}{2}}\right]\\
&\ \ \leq  \frac{1}{2}\E\left[\sup_{s\in[0,\tau]}\left(e^{2 \int_{0}^{s}\rho_r{\rm d}r}|\overline{y}_s|^2\right)\right]+64\E\left[\int_0^\tau e^{2\int_0^s \rho_r{\rm d}r}|\overline{z}_s|^2{\rm d}s\right]<+\infty.
\end{split}
\end{align*}
Then, by taking the mathematical expectation on both sides of \eqref{4.21} we obtain
\begin{align*}\label{339}
\begin{split}
\E\left[\int_0^{\tau} e^{2\int_0^s \rho_r{\rm d}r}|\overline{z}_s|^2{\rm d}s\right]
\leq  \frac{1}{\theta}\E\left[ \int_0^{\tau} e^{2\int_0^s \rho_r{\rm d}r}|\overline{V}_s|^2{\rm d}s\right].
\end{split}
\end{align*}
Therefore, $\Phi$ is a strict contraction mapping on $M_\tau^2(\rho_\cdot;\R^{k\times d})$, and then it has a unique fixed point $z_\cdot$ i.e. $\Phi(z_\cdot)=z_\cdot$, which indicates that, according to the definition of mapping $\Phi$ and the assertion of the fourth step, there must exist $y_\cdot\in S_\tau^2(\rho_\cdot;\R^{k})$ such that $(y_\cdot,z_\cdot)$ is the weighted $L^2$ solution of BSDE \eqref{BSDE1.1} in $H_\tau^2(\rho_\cdot;\R^{k}\times\R^{k\times d})$. The proof of the existence part of \cref{thm:3.1} for the case of $p=2$ is completed.\vspace{0.2cm}

{\bf Sixth Step:} In this step, we use the assertion of the fifth step to finally complete the proof of the existence part of \cref{thm:3.1} for the general case.

Now, we assume that $\xi\in L_\tau^2(\rho_\cdot;\R^k)$ and the generator $g$ satisfies assumptions \ref{A:H1}-\ref{A:H5} with
\begin{equation}\label{3.401}
\rho_t\geq \mu_t+\frac{\theta}{2[1\wedge(p-1)]}\nu_t^2, \ \ t\in[0,\tau].
\end{equation}
Recalling the definition of $q_r(x):=\frac{xr}{|x|\vee r}$ for each $ x\in\R^k$ and $r>0$ in the third step. Define
\begin{equation}\label{3.40}
\gamma_t:=e^{-\int_{0}^{t}\rho_s{\rm d}s}, \ \ t\in[0,\tau].
\end{equation}
For each $n\geq1$ and $t\in[0,\tau]$, let
\begin{equation}\label{340}
\xi_n:=q_{n\gamma_\tau}(\xi) \ \ \ \text{and} \ \ \ \overline{g}^n(t,y,z):=g(t,y,z)-g(t,0,0)+q_{ne^{-t}\gamma_t}(g(t,0,0)).
\end{equation}
Then, we have for each $n\geq1$,
\begin{equation}\label{341}
|\xi_n|\leq n\gamma_\tau \ \ \ \text{and} \ \ \ |\overline{g}^n(t,0,0)|\leq ne^{-t}\gamma_t, \ \ \ t\in[0,\tau].
\end{equation}
Note from \eqref{3.401} that
\begin{equation}\label{3.41}
\rho_t\geq \mu_t+\frac{\theta}{2}\nu_t^2, \ \ t\in[0,\tau].
\end{equation}
In light of \eqref{3.40}, \eqref{341} and \eqref{3.41} along with assumptions \ref{A:H2}-\ref{A:H5} and the integrability condition of $\xi$, we know that for each $n\geq1$, $\xi_n$ and $\overline{g}^n$ satisfy all assumptions in the fifth step, and then BSDE $(\xi_n,\tau,\overline{g}^n)$ has a unique weighted $L^2$ solution in the space of $H_\tau^2(\rho_\cdot;\R^{k}\times\R^{k\times d})$, denoted by $(y_t^n,z_t^n)_{t\in[0,\tau]}$. Furthermore, by \ref{A:H4} and \ref{A:H5} we deduce that for each $n\geq1$ and $(y,z)\in \R^k\times\R^{k\times d}$,
\begin{align}\label{342}
\begin{split}
\left<\hat{y},~\overline{g}^n(t,y,z)\right>&=\left<\hat{y},~g(t,y,z)-g(t,0,0)
+q_{ne^{-t}\gamma_t}(g(t,0,0))\right>\\
&=\left<\hat{y},~g(t,y,z)-g(t,0,z)+g(t,0,z)-g(t,0,0)+q_{ne^{-t}\gamma_t}
(g(t,0,0))\right>\\
&\leq \mu_t|y|+\nu_t|z|+ne^{-t}\gamma_t, \ \ t\in[0,\tau],
\end{split}
\end{align}
which along with \eqref{3.41} implies that $\overline{g}^n$ satisfies assumption \ref{A:A} with $\bar{p}:=2$, $\bar\mu_t:={\mu}_t$, $\bar\nu_t:=\nu_t$, ${\bar\rho_t}:=\rho_t$ and $f_t:=ne^{-t}\gamma_t$. It then follows from \eqref{3.40}, \eqref{341} and \cref{pro2.02} with $r=t$ that there exists a constant $C_\theta>0$ depending only on $\theta$ such that for each $t\geq0$ and $n\geq1$,
\begin{align*}
\begin{split}
e^{2\int_{0}^{t\wedge\tau}\rho_r{\rm d}r}|y_{t\wedge\tau}^n|^2
&\leq
C_\theta\left(\E\left[e^{2\int_{0}^{\tau}\rho_r{\rm d}r}|\xi_n|^2\bigg|\F_{t\wedge\tau}\right]+\E\left[\left(\int_{0}^{\tau}
e^{\int_{0}^{s}\rho_r{\rm d}r}ne^{-s}\gamma_s{\rm d}s\right)^2\bigg|\F_{t\wedge\tau}\right]\right)\\
& \leq 2n^2C_\theta,
\end{split}
\end{align*}
which indicates that $(e^{\int_{0}^{t}\rho_r{\rm d}r}y_t^n)_{t\in[0,\tau]}$ is a bounded process and $y_\cdot^n$ belongs to $S_\tau^p(\rho_\cdot;\R^k)$ for each $n\geq1$. On the other hand, in light of \eqref{3.401}, \eqref{3.40} and \eqref{342}, we know that for each $n\geq1$, $\overline{g}^n$ also satisfies assumption \ref{A:A} with $\bar{p}:=p$, $\bar\mu_t:={\mu}_t$, $\bar\nu_t:=\nu_t$, ${\bar\rho_t}:=\rho_t$ and $f_t:=ne^{-t}\gamma_t$. It then follows from \cref{pro2.01} that $z_\cdot^n$ belongs to $M_\tau^p(\rho_\cdot;\R^{k\times d})$ for each $n\geq1$.

In the sequel, we further prove that  $\{(y_\cdot^n,z_\cdot^n)\}^{+\infty}_{n=1}$ is a Cauchy sequence in $H_\tau^p(\rho_\cdot;\R^{k}\times\R^{k\times d})$. For each pair of integers $n, i\geq1$, let
$$\hat{\xi}^{n,i}:=\xi_{n+i}-\xi_n, \  \ \hat y_\cdot^{n,i}:=y_\cdot^{n+i}-y_\cdot^n, \ \ \hat z_\cdot^{n,i}:=z_\cdot^{n+i}-z_\cdot^n.$$
Then, we have
\begin{align*}
\hat y_t^{n,i}=\hat{\xi}^{n,i}+\int_t^\tau\hat{g}^{n,i}(s,\hat y_s^{n,i},\hat z_s^{n,i}){\rm d}s-\int_t^\tau\hat z_s^{n,i}{\rm d}B_s, \ \ t\in[0,\tau],
\end{align*}
where for each $(y,z)\in \R^k\times\R^{k\times d}$, $$\hat{g}^{n,i}(t,y,z):=\overline{g}^{n+i}(t,y+y_t^n,z+z_t^n)-
\overline{g}^{n}(t,y_t^n,z_t^n), \ \ t\in[0,\tau].$$
It follows from \eqref{340} and assumptions \ref{A:H4} and \ref{A:H5} that for each $(y,z)\in\R^k\times\R^{k\times d}$,
\begin{align*}
\begin{split}
\left<\hat{y},~\hat{g}^{n,i}(t,y,z)\right>
&=\left<\hat{y},
~\overline{g}^{n+i}(t,y+y_t^n,z+z_t^n)-\overline{g}^{n}(t,y_t^n,z_t^n)\right>\\
&\leq \mu_t|y|+\nu_t|z|+|q_{(n+i)e^{-t}\gamma_t}(g(t,0,0))
-q_{ne^{-t}\gamma_t}(g(t,0,0))|\\
&\leq
\mu_t|y|+\nu_t|z|+|g(t,0,0)|{\bf 1}_{|g(t,0,0)|>ne^{-t}\gamma_t}, \ \ t\in[0,\tau],
\end{split}
\end{align*}
which along with \eqref{3.401} implies that $\hat{g}^{n,i}$ satisfies assumption \ref{A:A} with $\bar{p}:=p$, $\bar\mu_t:={\mu}_t$, $\bar\nu_t:=\nu_t$, ${\bar\rho_t}:=\rho_t$ and $f_t:=|g(t,0,0)|{\bf 1}_{|g(t,0,0)|>ne^{-t}\gamma_t}$.
According to \cref{pro2.1} with $r=t=0$, we deduce that there exists a constant $C_{p,\theta}>0$ depending only on $p$ and $\theta$ such that for each $n,i\geq1$,
\begin{align}\label{345}
\begin{split}
&\E\left[\sup_{s\in[0,\tau]}\left(e^{p \int_0^s\rho_r{\rm d}r}|\hat{y}_s^{n,i}|^p\right)\right]+\E\left[\left(\int_{0}^{\tau}e^{2 \int_0^s\rho_r{\rm d}r}|\hat{z}_s^{n,i}|^2{\rm d}s\right)^{\frac{p}{2}}\right]\\
&\ \ \ \leq C_{p,\theta}\E\left[e^{p \int_0^\tau \rho_r{\rm d}r}|\xi|^p{\bf 1}_{|\xi|>n\gamma_\tau}+\left(\int_0^\tau e^{ \int_0^s{\rho_r}{\rm d}r}|g(s,0,0)|{\bf 1}_{|g(s,0,0)|>ne^{-s}\gamma_s}{\rm d}s\right)^p\right].
\end{split}
\end{align}
Thus, in light of \ref{A:H1} and Lebesgue's dominated convergence theorem, by taking first the supremum with respect to $i$ and then the upper limit with respect to $n$ on both sides of \eqref{345}, we can conclude that $\{(y_\cdot^n,z_\cdot^n)\}^{+\infty}_{n=1}$ is a Cauchy sequence in $H_\tau^p(\rho_\cdot;\R^{k}\times\R^{k\times d})$.

Finally, we denote by $(y_t,z_t)_{t\in[0,\tau]}$ the limit of the Cauchy sequence $\{(y_t^n,z_t^n)_{t\in[0,\tau]}\}_{n=1}^\infty$ in the space of $H_\tau^p(\rho_\cdot;\R^{k}\times\R^{k\times d})$ and show that $(y_\cdot,z_\cdot)$ solves BSDE \eqref{BSDE1.1} by passing to the limit under ucp for BSDE $(\xi_n,\tau,\overline{g}^n)$. First of all, similar to the process of taking the limit at the end of the third step, passing to a subsequence if needed, still denoted by $y^n_\cdot$ and $z^n_\cdot$, we can verify that \eqref{2.02}, \eqref{2.011} and \eqref{0.22} are still valid because of $\essinf\limits_{t\in[0,\tau]}\alpha_t>0$ and $\int_0^\tau |\rho_r|{\rm d}r<+\infty$. Then, it suffices to prove that
\begin{align}\label{348}
\int_0^\tau \overline{g}^n(t,y_t^n,z_t^n){\rm d}t\rightarrow \int_0^\tau g(t,y_t,z_t){\rm d}t, \ \ \ n\rightarrow \infty.
\end{align}
In fact, it follows from \ref{A:H2}, \ref{A:H5} and \eqref{340} that for each $t\in [0,\tau],$ as $n\rightarrow \infty,$ we have
\begin{align}\label{349}
\overline{g}^n(t,y_t^n,z_t^n)\rightarrow g(t,y_t,z_t).
\end{align}
In light of \eqref{340}, \ref{A:H5} and \eqref{2.011}, we get that for each $n\geq1$ and $t\in [0,\tau],$
\begin{align}\label{350}
\begin{split}
|\overline{g}^n(t,y_t^n,z_t^n)|&\leq |g(t,y_t^n,z_t^n)-g(t,0,0)|+|g(t,0,0)|\\
&\leq |g(t,y_t^n,z_t^n)-g(t,y_t^n,0)+g(t,y_t^n,0)-g(t,0,0)|+|g(t,0,0)|\\
&\leq \psi(t,M_3\alpha_t)+\nu_t\sup_{n\geq1}|z_t^n|+|g(t,0,0)|.
\end{split}
\end{align}
By virtue of $\essinf\limits_{t\in[0,\tau]}\alpha_t>0$ and $\int_0^\tau |\rho_r|{\rm d}r<+\infty$, along with \ref{A:H1} and \ref{A:H3}, we can get
\begin{align}\label{346}
\begin{split}
\int_0^\tau \psi(t,M_3\alpha_t){\rm d}t&=\int_0^\tau \frac{1}{\alpha_t} \alpha_t\psi(t,M_3\alpha_t){\rm d}t\\
&\leq \frac{1}{\essinf\limits_{t\in[0,\tau]}\alpha_t} \int_0^\tau \alpha_t\psi(t,M_3\alpha_t){\rm d}t<+\infty
\end{split}
\end{align}
and
\begin{align}\label{347}
\begin{split}
\int_0^\tau |g(t,0,0)|{\rm d}t&=\int_0^\tau e^{-\int_0^t\rho_r{\rm d}r}~e^{\int_0^t\rho_r{\rm d}r}|g(t,0,0)|{\rm d}t\\
&\leq e^{\int_0^\tau|\rho_r|{\rm d}t} \int_0^\tau e^{\int_0^t\rho_r{\rm d}r}|g(t,0,0)|{\rm d}t<+\infty.
\end{split}
\end{align}
Since $z^n_\cdot$ converges in $M_\tau^p(\rho_\cdot;\R^{k\times d})$ to $z_\cdot$, we can assume, choosing a subsequence if necessary, still denoted by $z^n_\cdot$, that
$$
\left\{\E\left[\left(\int_0^\tau e^{2 \int_0^t \rho_r{\rm d}r}|z^n_t-z_t|^2{\rm d}t\right)^{\frac{p}{2}}\right]\right\}^\frac{1}{p}\leq \frac{1}{2^n}.
$$
Note that
\begin{align*}
\begin{split}
\E\left[\left(\int_0^\tau e^{2 \int_0^t \rho_r{\rm d}r}\sup_{n\geq 1}|z^n_t-z_t|^2{\rm d}t\right)^{\frac{p}{2}}\right]\leq&
\E\left[\left(\int_0^\tau e^{2 \int_0^t \rho_r{\rm d}r}\sum_{n=1}^{+\infty}|z^n_t-z_t|^2{\rm d}t\right)^{\frac{p}{2}}\right]\\
\leq& \E\left[\left(\int_0^\tau e^{2 \int_0^t \rho_r{\rm d}r}\left(\sum_{n=1}^{+\infty}|z^n_t-z_t|\right)^2{\rm d}t\right)^{\frac{p}{2}}\right]\\
=& \bigg\|\sum_{n=1}^{+\infty}|z^n_t-z_t|\bigg\|_{M_\tau^p}^p\leq
\left(\sum_{n=1}^{+\infty}\|z^n_t-z_t\|_{M_\tau^p}\right)^p\\
\leq& \left(\sum_{n=1}^{+\infty}\frac{1}{2^n}\right)^p<+\infty.
\end{split}
\end{align*}
We have
\begin{align*}
\begin{split}
\E\left[\left(\int_0^\tau e^{2 \int_0^t \rho_r{\rm d}r} \sup_{n\geq 1} |z^n_t|^2 {\rm d}t\right)^{\frac{p}{2}}\right]\leq &
2^p \E\left[\left(\int_0^\tau e^{2 \int_0^t \rho_r{\rm d}r}\sup_{n\geq 1}|z^n_t-z_t|^2{\rm d}t\right)^{\frac{p}{2}}\right]\\
+& 2^p \E\left[\left(\int_0^\tau e^{2 \int_0^t \rho_r{\rm d}r}|z_t|^2{\rm d}t\right)^{\frac{p}{2}}\right]<+\infty.
\end{split}
\end{align*}
It then follows from  H${\rm \ddot{o}}$lder's inequality that
\begin{align}\label{351}
\begin{split}
\int_0^\tau \nu_t\sup_{n\geq1}|z_t^n|{\rm d}t&=\int_0^\tau e^{-\int_0^t\rho_r{\rm d}r}\nu_t~ e^{\int_0^t\rho_r{\rm d}r}\sup_{n\geq 1}|z_t^n|~{\rm d}t\\
&\leq \left(\int_0^\tau e^{-2\int_0^t\rho_r{\rm d}r}\nu_t^2{\rm d}t\right)^{\frac{1}{2}} \left(\int_0^\tau  e^{2\int_0^t\rho_r{\rm d}r}\sup_{n\geq 1}|z_t^n|^2 {\rm d}t\right)^{\frac{1}{2}}\\
&\leq e^{\int_0^\tau|\rho_r|{\rm d}r}\left(\int_0^\tau\nu_t^2{\rm d}t\right)^{\frac{1}{2}}\left(\int_0^\tau e^{2\int_0^t\rho_r{\rm d}r}\sup_{n\geq 1} |z_t^n|^2 {\rm d}t\right)^{\frac{1}{2}}<+\infty.
\end{split}
\end{align}
Thus, in light of \eqref{349}-\eqref{351}, by Lebesgue's dominated convergence theorem we obtain \eqref{348}. The proof of the existence part of \cref{thm:3.1} is then completed.

\section{Existence of viscosity solutions to parabolic and elliptic PDEs}
\setcounter{equation}{0}

In this section, we will give applications of our results regarding BSDEs to the existence of viscosity solutions to parabolic and elliptic PDEs. To be more precise, under some more general assumptions than those employed in subsection 3.4 of \citet{Li2025}, three nonlinear Feynman-Kac formulas are obtained, as outlined in \cref{thm:5.1}, \cref{rmk:5.2} and \cref{thm:5.4}. For the sake of convenience, we always assume that $n$ is a positive integer, and that $p>1$, $\theta>1$ and $K_i>0~(i=1,2,3)$ are five constants.

\subsection{Probabilistic interpretation for parabolic PDEs\vspace{0.2cm}}

Assume that the terminal time $\tau\equiv T$ is a finite constant. For the notion of viscosity
solution to make sense, we consider the following parabolic PDE:
\begin{align}\label{PDE'}
\left\{\begin{aligned}
&\frac{\partial u_i(t,x)}{\partial t}+\mathcal{L}u_i(t,x)+g_i(t,x,u(t,x),(\nabla_x u_i\sigma)(t,x))=0,\\
&(t,x)\in \T\times\R^n, \ i=1,\cdots,k; \ \ u(T,x)=h(x),\ x\in\R^n,
\end{aligned}\right.
\end{align}
where $\mathcal{L}$ is the infinitesimal generator of the diffusion solution $X_\cdot^{t,x}$ to the following SDE
\begin{align}\label{SDE}
X_s^{t,x}=x+\int_t^s b(r,X_r^{t,x}){\rm d}r+\int_t^s \sigma(r,X_r^{t,x}){\rm d}B_r,\ \ s\in [t,T].
\end{align}
The nonlinear Feynman-Kac formula consists in proving that the function defined by
\begin{align}\label{u}
\forall(t,x)\in \T\times\R^n, \ \ \ u(t,x):=Y_t^{t,x},
\end{align}
where, for each $(t_0,x_0)\in [0,T]\times\R^n,$ $(Y_\cdot^{t_0,x_0},Z_\cdot^{t_0,x_0})$ represents the adapted solution to the BSDE
\begin{align}\label{BSDE-1}
Y_t=h(X_T^{t_0,x_0})+\int_t^T g(s,X_s^{t_0,x_0},Y_s,Z_s){\rm d}s-\int_t^T Z_s{\rm d}B_s,\ \ t\in[0,T],
\end{align}
is a solution, at least a viscosity solution, to PDE \eqref{PDE'}.

The objective of this subsection is to derive the above probabilistic representation for a viscosity solution to PDE \eqref{PDE'} when the functions $g$ and $h$ satisfy some general assumptions. Let us first recall the following definition of a viscosity solution to PDE \eqref{PDE'}.

\begin{dfn}
A continuous function $u(t,x):\T\times \R^n\rightarrow \R^k$ with $u(T,\cdot)=h(\cdot)$ is a viscosity sub-solution (super-solution) of PDE \eqref{PDE'} if for any $1\leq i\leq k$ and any function $\phi\in C^{1,2}(\T\times \R^n;\R)$, it holds that
$$
\frac{\partial \phi(t_0,x_0)}{\partial t}+\mathcal{L}\phi(t_0,x_0)+g_i(t_0,x_0,u(t_0,x_0),
(\nabla_x\phi\sigma)(t_0,x_0))\geq 0\  (\leq 0),
$$
as long as the function $u_i-\phi$ has a local maximum (minimum) at the point $(t_0,x_0)\in [0,T)\times \R^n$. A viscosity solution of PDE \eqref{PDE'} is both a viscosity sub-solution and viscosity super-solution.
\end{dfn}

Now, let us introduce our assumption concerning the coefficients of SDE \eqref{SDE}, which guarantee that for each $(t,x)\in \T\times \R^n$, SDE \eqref{SDE} admits a unique solution $X_\cdot^{t,x}\in S^{\bar p}_T(0;\R^n)$ for any $\bar p\geq1$.

\begin{enumerate}
\renewcommand{\theenumi}{(A1)}
\renewcommand{\labelenumi}{\theenumi}
\item\label{A:A1} both $b(t,x):[0,T]\times\R^n\rightarrow\R^n$ and $\sigma(t,x):[0,T]\times \R^n\rightarrow\R^{n\times d}$ are jointly continuous functions and satisfy that for each $(t,x_1,x_2)\in \T\times\R^n\times\R^n$,
    $$
    |b(t,x_1)-b(t,x_2)|+|\sigma(t,x_1)-\sigma(t,x_2)|\leq K_1|x_1-x_2| \ \ \ \text{and} \ \ \ |b(t,0)|+|\sigma(t,0)|\leq K_1.
    $$
\end{enumerate}

Next, we give our assumptions on the terminal value and the generator of BSDE \eqref{BSDE-1}.

\begin{enumerate}
\renewcommand{\theenumi}{(A2)}
\renewcommand{\labelenumi}{\theenumi}
\item\label{A:A2} both $h(x):\R^n \rightarrow \R^k$ and $g(t,x,y,z):[0,T]\times \R^n\times\R^k\times \R^{k\times d} \rightarrow \R^k$ are (jointly) continuous functions, the $i$th component $g_i$ of $g$, for each $1\leq i\leq k$, varies only when $(t,x,z^i)$ changes, where $z^i$ represents the $i$th row of the matrix $z$, and for some $\bar{q}>0$ and each $(t,x,y,y_1,y_2,z,z_1,z_2)\in [0,T]\times \R^n\times\R^{3k}\times \R^{3(k\times d)}$, we have
\begin{align}\label{A2'}
\begin{array}{c}
\left\langle y_1-y_2, g(t,x,y_1,z)-g(t,x,y_2,z)\right\rangle \leq m(t,x)|y_1-y_2|^2,\\
|g(t,x,y,z_1)-g(t,x,y,z_2)|\leq n(t,x)|z_1-z_2|,\\
|h(x)|+|g(t,x,y,0)|\leq K_1(1+|x|^{\bar{q}})(1+\varphi(|y|)),
\end{array}
\end{align}
where $\varphi(\cdot):\R_+\rightarrow \R_+$ is a continuous nondecreasing function, and both
$ m(t,x):[0,T]\times\R^n\rightarrow\R$ and $n(t,x):[0,T]\times\R^n\rightarrow\R_+$ are jointly continuous functions satisfying
\begin{align}\label{6.2}
\RE (t,x)\in \T\times \R^n,\ \ m(t,x)+\frac{\theta}{2[1\wedge(p-1)]}n^2(t,x)\leq K_1.
\end{align}
\end{enumerate}

Under the above assumptions \ref{A:A1}-\ref{A:A2}, it can be easily verified that for each $(t,x)\in \T\times\R^n$, $h(X_T^{t,x})\in L_T^p(\rho^{t,x}_\cdot;\R^k)$ and $g(r,X_r^{t,x},y,z){\bf 1}_{t\leq r\leq T}$ satisfies assumptions \ref{A:H1}, \ref{A:H2}, \ref{A:H4} and \ref{A:H5} with
$$
\mu_r^{t,x}:=m(r,X_r^{t,x}){\bf 1}_{t\leq r\leq T},\ \ \ \nu_r^{t,x}:=n(r,X_r^{t,x}){\bf 1}_{t\leq r\leq T} \ \ \  {\rm and} \ \ \ \rho_r^{t,x}:=\mu_r^{t,x}+\frac{\theta}{2[1\wedge(p-1)]}|\nu_r^{t,x}|^2,
$$
along with assumption \ref{A:H3d} with ${\tilde \mu}_r:=K_1(1+|X_r^{t,x}|^{\bar{q}})$ and $\hat{\mu}_r\equiv1.$
It then follows from \cref{thm:3.1} and \cref{rmk:3.4} that for each $(t,x)\in \T\times\R^n$, BSDE \eqref{BSDE-1} admits a unique weighted $L^p$ solution $(Y_s^{t,x},Z_s^{t,x})_{s\in [t,T]}$ in the space of $H_\tau^p(\rho_\cdot^{t,x};\R^{k}\times\R^{k\times d})$. In addition, by virtue of the Markov property of SDE \eqref{SDE} and BSDE \eqref{BSDE-1} we can conclude that $Y_t^{t,x}$ is deterministic for each $(t,x)\in \T\times\R^n$.\vspace{0.2cm}

Now we can state and prove the main result of this subsection with the help of \cref{pro2.1}, \cref{thm:3.1}, \cref{thm:3.2} and Girsanov's theorem.
\begin{thm}\label{thm:5.1}
Under the above assumptions \ref{A:A1}-\ref{A:A2}, the function $u(\cdot,\cdot)$ defined by \eqref{u} is continuous on $[0,T]\times \R^n$, and it is a viscosity solution of PDE \eqref{PDE'}. Moreover, there exist two constants $q>0$ and $C>0$ depending only on $(p,\theta,T,K_1)$ such that
\begin{align}\label{6.4}
\forall (t,x)\in \T\times\R^n, \ \ |u(t,x)|\leq C(1+|x|^q).
\end{align}
\end{thm}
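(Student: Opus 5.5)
The plan follows the classical Pardoux--Peng route: a polynomial growth bound for $u$, then continuity of $u$ from the stability estimates, then the viscosity property via a comparison argument on short intervals. All three steps rest on \cref{pro2.1} and \cref{thm:3.2}.

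\emph{Step 1: the growth bound \eqref{6.4}.} By \eqref{6.2} we have $\rho_r^{t,x}\le K_1$. The a priori estimate \cref{pro2.1} is only an upper bound, so we work directly with the weight $\bar\rho\equiv K_1$ instead of $\rho^{t,x}$. Since $m+\frac{\theta}{2[1\wedge(p-1)]}n^2\le K_1$, assumption \ref{A:A} holds with
\[
\bar\rho\equiv K_1,\quad \bar\mu=m(r,X_r^{t,x}),\quad \bar\nu=n(r,X_r^{t,x}),\quad f_r=|g(r,X_r^{t,x},0,0)|\le 2K_1(1+|X^{t,x}_r|^{\bar q})(1+\varphi(0)).
\]
We must still check that $Y^{t,x}\in S^p_T(K_1;\R^k)=S^p_T(0;\R^k)$. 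Over the bounded horizon $T$ this follows from $\rho^{t,x}\le K_1$, provided we also have a lower bound on $\rho^{t,x}$. If no such bound is available, we instead construct the solution in $H^p_T(K_1)$ directly by \cref{thm:3.1} with $\rho\equiv K_1$ (the hypotheses hold because $K_1\ge \mu+\frac{\theta}{2[1\wedge(p-1)]}\nu^2$) and identify it with $Y^{t,x}$. The identification uses the uniqueness argument of \cref{thm:3.1}, applied in the larger of the two spaces; equivalently, \cref{pro2.1} with $\bar\rho=\rho^{t,x}$ gives uniqueness for the difference. With \cref{pro2.1} at $r=t$, the moment bound $\E\sup_s|X^{t,x}_s|^{m}\le C(1+|x|^{m})$ then yields $|u(t,x)|^p\le C(1+|x|^{p\bar q})$, which is \eqref{6.4}.

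\emph{Step 2: continuity of $u$.} Let $(t_j,x_j)\to(t,x)$. Extend $Y^{t,x}$ to $[0,T]$ by setting $Y_s=Y_{s\vee t}$ and $Z_s=Z_s{\bf 1}_{s\ge t}$. Apply \cref{thm:3.2} in the space with constant weight $K_1$ to the pair of BSDEs with data $(h(X^{t_j,x_j}_T),\, g(\cdot,X^{t_j,x_j},\cdot,\cdot){\bf 1}_{[t_j,T]})$ and $(h(X^{t,x}_T),\, g(\cdot,X^{t,x},\cdot,\cdot){\bf 1}_{[t,T]})$. The right-hand side involves
\[
\E\Bigl|h(X^{t_j,x_j}_T)-h(X^{t,x}_T)\Bigr|^p+\E\Bigl(\int_0^T\bigl|g(s,X^{t_j,x_j}_s,Y_s,Z_s){\bf 1}_{s\ge t_j}-g(s,X^{t,x}_s,Y_s,Z_s){\bf 1}_{s\ge t}\bigr|\,ds\Bigr)^p .
\]
This tends to $0$ by joint continuity of $h,g$, convergence $X^{t_j,x_j}\to X^{t,x}$ in $S^{\bar p}$, and uniform integrability from the polynomial growth and the $z$-Lipschitz bound $n(\cdot,X)|Z|$. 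The main obstacle sits here: $n(s,X_s)$ may be unbounded. We handle it by localizing on $\{\sup_s|X_s|\le R\}$, where $m$ and $n$ are bounded by continuity, and controlling the complement through the $L^p$ bounds from Step 1 together with Hölder's inequality. Since $Y_t^{t,x}$ is deterministic, continuity of $u$ follows; $u(T,\cdot)=h$ is immediate.

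\emph{Step 3: viscosity property.} The Markov property gives $Y^{t,x}_s=u(s,X^{t,x}_s)$. Take $\phi$ such that $u_i-\phi$ has a local maximum at $(t_0,x_0)$ with $u_i(t_0,x_0)=\phi(t_0,x_0)$, and suppose, for contradiction, that $\partial_t\phi+\mathcal L\phi+g_i(t_0,x_0,u,\nabla\phi\sigma)<0$. By continuity this inequality persists on a neighbourhood. Fix the stopping time $\tau_\delta=\inf\{s\ge t_0:|X_s-x_0|\ge\delta\}\wedge(t_0+\delta)$. On $[t_0,\tau_\delta]$ the coefficients $m$ and $n$ are bounded, so we may apply Itô's formula to $\phi(s,X_s)$ and compare the one-dimensional BSDE for $Y^i$ (recall $g_i$ depends only on $z^i$) with the BSDE solved by $\phi(\cdot,X)$. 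The comparison is done by linearization and Girsanov's theorem, with the drift $n$ bounded on this interval. This yields $Y^i_{t_0}<\phi(t_0,x_0)$, contradicting $u_i(t_0,x_0)=\phi(t_0,x_0)$. The supersolution property is symmetric.
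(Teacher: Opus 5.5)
Your Steps 1 and 3 follow the paper: the growth bound comes from \cref{pro2.1}, and the viscosity property from the exit time, It\^o's formula for $\phi(s,X_s)$, and Girsanov with the drift $n(s,X_s)$ bounded before the exit. In Step 1, your detour through \cref{thm:3.1} with $\rho\equiv K_1$, followed by uniqueness in $H^p_T(\rho^{t,x}_\cdot;\R^k\times\R^{k\times d})\supseteq H^p_T(K_1;\R^k\times\R^{k\times d})$, is correct. It also yields the useful fact $(Y^{t,x},Z^{t,x})\in H^p_T(0;\R^k\times\R^{k\times d})$. The gap is in Step 2. The paper only cites \cref{thm:3.2} there, and the justification you supply does not work as stated. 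Splitting on $\{\sup_s|X_s|\le R\}$ and bounding the complement by H\"older needs a bound on $\int_0^T|\Delta_j g_s|\,{\rm d}s$ in some $L^{p+\varepsilon}$, uniformly in $j$, where $\Delta_j g_s$ is your generator discrepancy. You in fact already need it in $L^p$ for the right-hand side of \cref{thm:3.2} to be finite. Step 1 gives nothing of the sort, because $|\Delta_j g_s|$ contains two uncontrolled terms:
\begin{itemize}
\item $\varphi(|Y_s|)$, where $\varphi$ is an arbitrary nondecreasing function;
\item $\big(n(s,X^{t_j,x_j}_s)+n(s,X^{t,x}_s)\big)|Z_s|$, where $n$ may grow arbitrarily in $x$. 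Only $m+\frac{\theta}{2[1\wedge(p-1)]}n^2\le K_1$ is imposed, and $m$ may go to $-\infty$ (cf.\ \cref{rmk:5.2}).
\end{itemize}
Even using $|Y_s|=|u(s,X_s)|\le C(1+|X_s|^{\bar q})$, the variable $\varphi(C(1+|X_s|^{\bar q}))$ need not be integrable, and neither need $n(s,X_s)$. All you know is $Y\in S^p$, $Z\in M^p$, and moments of $X$.

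The fix is to localize in time rather than in $\omega$. Let $\sigma^j_R$ be the first time $X^{t,x}$ or $X^{t_j,x_j}$ leaves the ball of radius $R$, capped at $T$. Apply \cref{thm:3.2}, whose setting allows stopping-time horizons, on $[0,\sigma^j_R]$ with weight $K_1$ and terminal values $Y^{t_j,x_j}_{\sigma^j_R}$ and $Y^{t,x}_{\sigma^j_R}$.
\begin{itemize}
\item Generator term. Before $\sigma^j_R$, $m$ and $n$ are bounded, and so is $Y^{t,x}$, through $Y^{t,x}_s=u(s,X^{t,x}_s)$ and \eqref{6.4}. The generator discrepancy is therefore dominated by $C_R(1+|Z^{t,x}_s|)$, which lies in $L^p$ because $Z^{t,x}\in M^p_T(0;\R^{k\times d})$ by your Step 1. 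Dominated convergence kills it for fixed $R$.
\item Terminal term on $\{\sigma^j_R=T\}$. The discrepancy is $h(X^{t_j,x_j}_T)-h(X^{t,x}_T)$, handled by continuity and polynomial growth of $h$.
\item Terminal term on $\{\sigma^j_R<T\}$. It is $|u(\sigma^j_R,X^{t_j,x_j}_{\sigma^j_R})-u(\sigma^j_R,X^{t,x}_{\sigma^j_R})|\le C\big(1+\sup_s|X^{t_j,x_j}_s|^{\bar q}+\sup_s|X^{t,x}_s|^{\bar q}\big)$. H\"older together with $\sup_j\mathbb P(\sigma^j_R<T)\to 0$ as $R\to\infty$ makes it small.
\end{itemize}
This uses only moments of $X$, never of $\varphi(|Y|)$ or $n(s,X_s)|Z_s|$. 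It relies on the Markov identity and on the bound \eqref{6.4} from Step 1, whose proof does not need continuity of $u$, so there is no circularity.
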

\begin{proof}
By virtue of \cref{thm:3.2} and the continuity of $X_\cdot^{t,x}$ with respect to $(t,x)$ we can derive that $u(t,x):=Y_t^{t,x}$ is a continuous function on $[0,T]\times \R^n$. The desired assertion \eqref{6.4} can be easily derived from \cref{pro2.1} along with the assumptions on functions $h$ and $g$. We omit its proof here.

Next, we prove that $u(t,x)$ is a viscosity sub-solution. By an identical way it can be shown that $u(t,x)$ is also a viscosity super-solution. Let us take any $i=1,\cdots,k$, and function $\phi\in C^{1,2}(\T\times \R^n;\R)$ such that $u_i-\phi$ attains a local maximum at $(t_0,x_0)\in [0,T)\times\R^n$ and without loss of generality,
$$\phi(t_0,x_0)=u_i(t_0,x_0).$$
In order to show that $u(t,x)$ is a viscosity sub-solution, it suffices to prove that
$$\frac{\partial \phi(t_0,x_0)}{\partial t}+\mathcal{L}\phi(t_0,x_0)+g_i(t_0,x_0,u(t_0,x_0),(\nabla_x\phi\sigma)(t_0,x_0))\geq0.$$
In fact, if not, it is known from continuity that there exist $\gamma>0$ and $0<\delta\leq T-t_0$, when $t_0\leq t\leq t_0+\delta$ and $|y-x_0|\leq\delta$, such that
\begin{align}\label{6.5}
\left\{\begin{aligned}
&u_i(t,y)\leq \phi(t,y);\\
&\frac{\partial \phi(t,y)}{\partial t}+\mathcal{L}\phi(t,y)+g_i(t,y,u(t,y),(\nabla_x\phi\sigma)(t,y))\leq -\gamma.
\end{aligned}\right.
\end{align}
Define the following stopping time:
$$\vartheta:=\inf\{u\geq t_0:|X_u^{t_0,x_0}-x_0|\geq \delta\}\wedge(t_0+\delta)>t_0.$$
By virtue of the Markov property and BSDE \eqref{BSDE-1}, we deduce that $(\bar {Y}_t,\bar {Z}_t)_{t\in[t_0,\vartheta]}
:=((Y_{t}^{t_0,x_0})^i,(Z_t^{t_0,x_0})^i)_{t\in[t_0,\vartheta]}$ is an adapted solution of the following one-dimensional BSDE:
\begin{align}\label{BSDE1}
\bar{Y}_t=u_i(\vartheta,X_{\vartheta}^{t_0,x_0})+\int_t^{\vartheta} g_i(s,X_s^{t_0,x_0},u(s,X_s^{t_0,x_0}),\bar {Z}_s){\rm d}s-\int_t^{\vartheta} \bar{Z}_s{\rm d}B_s, \ t\in[t_0,\vartheta].
\end{align}
On the other hand, by It\^{o}'s formula we derive that $(\widetilde{Y}_t,\widetilde{Z}_t)_{t\in[t_0,\vartheta]}:=(\phi(t,X_t^{t_0,x_0}),
(\nabla_x\phi\sigma)(t,X_t^{t_0,x_0}))_{t\in[t_0,\vartheta]}$ is an adapted solution for the following BSDE:
\begin{align}\label{BSDE2}
\widetilde{Y}_t=\phi(\vartheta,X_{\vartheta}^{t_0,x_0})-\int_t^{\vartheta} \left\{\frac{\partial\phi}{\partial t}+\mathcal{L}\phi\right\}(s,X_s^{t_0,x_0})){\rm d}s-\int_t^{\vartheta} \widetilde{Z}_s{\rm d}B_s, \ \ t\in[t_0,\vartheta].
\end{align}
Combining \eqref{6.5}, \eqref{BSDE1} and \eqref{BSDE2} along with the definition of the stopping time $\vartheta$ and the Lipschitz continuity for $g$ in $z$, we obtain
\begin{align}\label{eq:5.21}
\begin{split}
\widetilde{Y}_{t_0}-\bar {Y}_{t_0}=&\ \phi(\vartheta,X_{\vartheta}^{t_0,x_0})
-u_i(\vartheta,X_{\vartheta}^{t_0,x_0})-\int_{t_0}^{\vartheta} (\widetilde{Z}_s-\bar {Z}_s){\rm d}B_s\\
&\ -\int_{t_0}^{\vartheta}\left(\left\{\frac{\partial\phi}{\partial t}+\mathcal{L}\phi\right\}(s,X_s^{t_0,x_0})+g_i(s,X_s^{t_0,x_0},
u(s,X_s^{t_0,x_0}),\widetilde{Z}_s)\right){\rm d}s\\
&\ +\int_{t_0}^{\vartheta} \left( g_i(s,X_s^{t_0,x_0},u(s,X_s^{t_0,x_0}),\widetilde{Z}_s)
-g_i(s,X_s^{t_0,x_0},u(s,X_s^{t_0,x_0}),\bar {Z}_s)\right){\rm d}s\\
\geq &\ \gamma(\vartheta-t_0) -\int_{t_0}^{\vartheta} n(s,X_s^{t_0,x_0})|\widetilde{Z}_s-\bar {Z}_s|{\rm d}s-\int_{t_0}^{\vartheta}(\widetilde{Z}_s-\bar {Z}_s){\rm d}B_s.
\end{split}
\end{align}
In light of the definition of the stopping time $\vartheta$ and the continuity of the function $n(t,x)$, according to Girsanov's theorem we know that the process
$$
B^{\mathbb{Q}}_s:=B_s+\int_{t_0}^{s\wedge\vartheta\vee t_0} n(r,X_r^{t_0,x_0})\frac{(\widetilde{Z}_r-\bar Z_r)^*}
{|\widetilde{Z}_r-\bar Z_r|}{\bf 1}_{|\widetilde{Z}_r-\bar Z_r|\neq 0} {\rm d}r,\ \ s\in [0,T]
$$
is a standard $d$-dimensional Brownian motion under a probability measure $\mathbb{Q}$ equivalent to $\mathbb{P}$ defined by
$$
\frac{{\rm d}\mathbb{Q}}{{\rm d}\mathbb{P}}:=\exp\left(-\int_{t_0}^\vartheta n(r,X_r^{t_0,x_0})
\frac{\widetilde{Z}_r-\bar {Z}_r}
{|\widetilde{Z}_r-\bar {Z}_r|}{\bf 1}_{|\widetilde{Z}_r-\bar {Z}_r|\neq 0} {\rm d}B_r-\frac{1}{2}\int_{t_0}^\vartheta n^2(r,X_r^{t_0,x_0}){\bf 1}_{|\widetilde{Z}_r-\bar {Z}_r|\neq 0}{\rm d}r\right).
$$
It then follows from \eqref{eq:5.21} that
$$
\widetilde{Y}_{t_0}-\bar Y_{t_0}\geq \gamma(\vartheta-t_0) -\int_{t_0}^{\vartheta}(\widetilde{Z}_s-\bar Z_s){\rm d}B^{\mathbb{Q}}_s.
$$
By taking the conditional mathematical expectation under $\mathbb{Q}$ with respect to $\F_{t_0}$ in the last inequality, we have $\widetilde{Y}_{t_0}>\bar Y_{t_0}$, i.e., $\phi(t_0,x_0)>u_i(t_0,x_0)$, which yields a contradiction. The proof is then complete.
\end{proof}

\begin{rmk}\label{rmk:5.2}
By the classical results on SDEs, if the condition of $|\sigma(t,0)|\leq K_1$ for each $t\in\T$ appearing in assumption \ref{A:A1} is strengthened to
\begin{align}\label{5.12-1}
\RE (t,x)\in \T\times \R^n,\ \ \ |\sigma(t,x)|\leq K_1,
\end{align}
then there exist two positive constants $C_1$ and $C_2$ depending only on $(K_1,T)$ such that the unique adapted solution $(X_s^{t,x})_{s\in [t,T]}$ of SDE \eqref{SDE} satisfies
\begin{align*}
\RE (t,x)\in \T\times \R^n,\ \ \ \E\left[\exp\left(C_1\sup\limits_{t\leq s\leq T}|X_s^{t,x}|^2\right)\right]\leq C_2\exp\left(C_2|x|^2\right).
\end{align*}
Thus, by virtue of \cref{thm:3.1}, \cref{thm:3.2} and \cref{pro2.1}, using an identical analysis as above yields that when \ref{A:A1} and \eqref{5.12-1} are fulfilled, the last condition in \eqref{A2'} is meanwhile weakened to
\begin{align*}
|h(x)|+|g(t,x,y,0)|\leq K_1\exp(K_2|x|^2)(1+\varphi(|y|))
\end{align*}
and \eqref{6.2} is relaxed to
\begin{align*}
m(t,x)+\frac{\theta}{2[1\wedge(p-1)]}n^2(t,x)\leq K_1+K_3|x|^2\ \ \ {\rm and}\ \ \ p(TK_3+K_2)\leq C_1,
\end{align*}
BSDE \eqref{BSDE-1} admits a unique weighted $L^p$ solution $(Y_s^{t,x},Z_s^{t,x})_{s\in [t,T]}$ in the space of $H_\tau^p(\rho_\cdot^{t,x};\R^{k}\times\R^{k\times d})$ for each $(t,x)\in \T\times \R^n$ with
$$
\rho_r^{t,x}:=m(r,X^{t,x}_r){\bf 1}_{t\leq r\leq T}+\frac{\theta}{2[1\wedge(p-1)]}n^2(r,X^{t,x}_r){\bf 1}_{t\leq r\leq T},\ \ r\in [0,T]
$$
and the function $u(\cdot,\cdot)$ defined by \eqref{u} is a viscosity solution of PDE \eqref{PDE'}. Moreover, there exits a constant $C>0$ depending only on $(p,\theta,T,K_1)$ such that
\begin{align*}
\forall (t,x)\in \T\times\R^n, \ \ |u(t,x)|\leq C\exp(C |x|^2).
\end{align*}
It is obvious that the above assertions require weaker conditions than those employed in Theorem 3.17 of \citet{Li2025}. On the other hand, we would like to emphasize that in \cref{thm:5.1}, the condition \eqref{6.2} allows both the monotonicity coefficient $m(t,x)$ and the Lipschitz coefficient $n(t,x)$ to have a general growth in $x$ due to the fact that $m(t,x)$
can take values in $\R$. For instance, \eqref{6.2} holds for $p=\theta=2$, $$m(t,x)\equiv -e^{2|x|}+\sin |x|\ \ \ {\rm and}\ \ \ n(t,x)\equiv e^{|x|}.$$
\end{rmk}

\subsection{Probabilistic interpretation for elliptic PDEs\vspace{0.2cm}}

Let both $b(x):\R^n\rightarrow\R^n$ and $\sigma(x): \R^n\rightarrow\R^{n\times d}$ be globally Lipschitz continuous functions, and for each $x\in \R^n$, let $(X_t^{x})_{t\geq 0}$ denote the solution of the following SDE:
\begin{align}\label{SDE-2}
X_t^{x}=x+\int_0^t b(X_s^{x}){\rm d}s+\int_0^t \sigma(X_s^{x}){\rm d}B_s, \ \ t\geq 0.
\end{align}
Assume that $G$ is an open bounded subset of $\R^n$, whose boundary $\partial G$ is of class $C^1$, and that there exists a constant $\lambda>0$ such that for all $\zeta\in \mathbb{R}^n$ and $x \in \bar {G}:=G\cup\partial G$,
$$
\sum_{i,j=1}^n (\sigma \sigma^*(x))_{i,j} \zeta_i \zeta_j \geq \lambda |\zeta|^2.
$$
Assume further that there is a constant $\eta> 0$ such that for all $y \in \partial G$, there exists $\tilde{y}\in \R^n\backslash G$ such that
$$
\bar {G} \cap \left\{\check{y}\in \mathbb{R}^n : |\check{y}-\tilde{y}| \leq \eta \right\} = \{ y \}.
$$

For each $x\in \bar {G}$, define the stopping time: $\tau_x\equiv\inf\{t\geq 0: X_t^{x}\notin \bar {G}\}$. According to Lemma 3.1 in \citet{BuckdahnNie(2016)}, it can be concluded that $\mathbb{P}(\tau_x<\infty)=1$ for all $x\in\bar {G}$,
\begin{align}\label{closed}
\Gamma:=\{x\in\partial G:\mathbb{P}(\tau_x>0)=0\}=\partial G
\end{align}
and there exists a constant $\varrho>0$ such that
\begin{align}\label{sup}
\sup_{x \in \bar{G}} E\left[\exp(\varrho\tau_x)\right]<+\infty.
\end{align}

Now, suppose that both $h(x):\R^n\rightarrow \R^k$ and $g(x,y,z): \R^n\times\R^k\times \R^{k\times d}\rightarrow\R^k$ are (jointly) continuous functions, the $i$th component $g_i$ of $g$, for each $1\leq i\leq k$, varies only when $(t,x,z^i)$ changes, where $z^i$ represents the $i$th row of the matrix $z$, and for each $(x,y,y_1,y_2,z,z_1,z_2)\in \R^n\times \R^{3k}\times\R^{3(k\times d)}$,
\begin{align}\label{condition-2}
\begin{array}{c}
\left\langle y_1-y_2,~g(x,y_1,z)-g(x,y_2,z)\right\rangle\leq m(x)|y_1-y_2|^2,\\
|g(x,y,z_1)-g(x,y,z_2)|\leq n(x)|z_1-z_2|,
\end{array}
\end{align}
where both $m(x):\R^n\rightarrow \R$ and $n(x):\R^n\rightarrow \R_+$ are continuous functions satisfying
\begin{align}\label{hb}
p\max\limits_{x\in \bar G}\left\{m(x)+\frac{\theta}{2[1\wedge(p-1)]} n^2(x)\right\}<\varrho.
\end{align}
In light of the definition of stopping time $\tau_x$ and the continuity of functions $h(\cdot)$ and $g(\cdot,0,0)$ along with \eqref{sup} and \eqref{hb}, it is not very difficult to verify that for each $x\in \bar {G}$,
\begin{align}\label{aa}
\E\left[e^{p\int_0^{\tau_x} l^x_s {\rm d}s}|h(X^x_{\tau_x})|^p+
\left(\int_0^{\tau_x}e^{\int_0^t l^x_s {\rm d}s}|g(X^x_t,0,0)| {\rm d}t\right)^p\right]<+\infty
\end{align}
with
$$
l^x_s:=m(X^x_s)+\frac{\theta}{2[1\wedge(p-1)]} n^2(X^x_s).\vspace{0.1cm}
$$
Then, in light of \eqref{aa} and \eqref{condition-2} along with the definition of stopping time $\tau_x$ and the continuity of function $g(\cdot,\cdot,0)$, by \cref{thm:3.1} we can deduce that for each $x\in \bar {G}$, the following BSDE
\begin{align}\label{BSDE-3}
Y_t^{x}=h(X_{\tau_x}^{x})+\int_t^{\tau_x} g(X_s^{x},Y_s^{x},Z_s^{x}){\rm d}s-\int_t^{\tau_x} Z_s^{x}{\rm d}B_s,\ \ t\in [0, \tau_x]
\end{align}
admits a unique weighted $L^p$ solution $(Y_t^{x},Z_t^{x})_{t\in [0,\tau_x]}$ in the space of $H_{\tau_x}^p(l^x_\cdot;\R^{k}\times\R^{k\times d})$.

In the sequel, we consider the following elliptic PDE with Dirichlet boundary condition:
\begin{align}\label{PDE-1}
\left\{\begin{aligned}
&\mathcal{\bar L}u_i(x)+g_i(x,u(x),(\nabla u_i\sigma)(x))=0,\ 1\leq i\leq k,\ x\in G;\\
&u_i(x)=h(x),\ 1\leq i\leq k,\ x\in\partial G,
\end{aligned}\right.
\end{align}
where
$$
\mathcal{\bar L}:=\frac{1}{2}\sum\limits_{i,j}(\sigma\sigma^*)_{i,j}(x)
\frac{\partial^2}{\partial x_i\partial x_j}+\sum\limits_{i}b_i(x)\frac{\partial}{\partial x_i},\ \ \nabla:=(\frac{\partial}{\partial x_1},\cdots,\frac{\partial}{\partial x_n}).
$$
and then give the definition of a continuous viscosity solution to \eqref{PDE-1} in our framework.
\begin{dfn}
A continuous function $u:\bar {G}\rightarrow\R^k$ is a viscosity sub-solution (super-solution) of PDE \eqref{PDE-1} if for any $1\leq i\leq k$ and any function $\phi\in C^2(\bar {G};\R)$, it holds that
$$
\begin{array}{c}
\mathcal{\bar L}\phi(x_0)+g_i(x_0,u(x_0),(\nabla\phi\sigma)(x_0))\geq 0\ (\leq 0), \ x_0\in G;\\
\max\{\mathcal{\bar L}\phi(x_0)+g_i(x_0,u(x_0),(\nabla\phi\sigma)(x_0)),
h_i(x_0)-u_i(x_0)\}\geq 0, \ x_0\in \partial G\\
(\min\{\mathcal{\bar L}\phi(x_0)+g_i(x_0,u(x_0),(\nabla\phi\sigma)(x_0)),
h_i(x_0)-u_i(x_0)\}\leq 0, \ x_0\in \partial G),
\end{array}
$$
as long as $u_i-\phi$ has a local maximum (minimum) at point $x_0\in \bar {G}$. A continuous function $u$ is a viscosity solution of PDE \eqref{PDE-1} if it is both a viscosity sub-solution and a viscosity super-solution.
\end{dfn}

Now we are at the position to give the probabilistic interpretation for elliptic PDE \eqref{PDE-1}.

\begin{thm}\label{thm:5.4}
Under the above assumptions, $u(x):=Y_0^{x},\ x\in \R^n$ is a continuous function on $\bar {G}$ and it is a viscosity solution of PDE \eqref{PDE-1}.
\end{thm}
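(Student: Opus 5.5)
The plan follows the parabolic case (\cref{thm:5.1}) but localises via the exit time $\tau_x$. There are three parts: continuity of $u$ on $\bar G$, the boundary condition, and the interior sub/super-solution inequalities.

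\emph{Continuity.} Fix $x_n\to x$ in $\bar G$. Compare $(Y^{x_n},Z^{x_n})$ with $(Y^x,Z^x)$ using \cref{thm:3.2}, after extending both BSDEs to the common horizon $\tau_{x_n}\vee\tau_x$: on $[\tau_x,\tau_{x_n}\vee\tau_x]$ set the solution equal to $h(X^x_{\tau_x})$ and the generator to $0$. The difference then solves a BSDE whose generator error is controlled by three quantities: $|g(X^{x_n}_s,\cdot)-g(X^x_s,\cdot)|$ on the common interval, $|g(X^{x_n}_s,Y_s,Z_s)|$ on the mismatch interval, and $|h(X^{x_n}_{\tau_{x_n}})-h(X^x_{\tau_x})|$. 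Since every coefficient lives in the bounded set $\bar G$, $m,n$ are bounded there and the weights $e^{\int l}$ are dominated by $e^{c\tau}$ with $pc<\varrho$ by \eqref{hb}, so \eqref{sup} gives uniform integrability. The key probabilistic input is $\tau_{x_n}\to\tau_x$ in probability. This follows from the regularity \eqref{closed} ($\Gamma=\partial G$, i.e.\ every boundary point is regular) together with uniform non-degeneracy and the exterior sphere condition, as in \citet{BuckdahnNie(2016)} and Pardoux's elliptic Feynman--Kac arguments. Given this, the continuity of $X^x$ in $x$, the continuity of $h$ and $g$, and dominated convergence make the right-hand side of \cref{thm:3.2} tend to zero. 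I expect this step to be the main obstacle: exit-time convergence, combined with the uniform weighted $L^p$ bounds from \cref{pro2.1} (uniform in $x\in\bar G$ thanks to \eqref{sup} and \eqref{hb}), needs care to control the mismatch interval.

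\emph{Boundary condition.} For $x\in\partial G$, \eqref{closed} gives $\tau_x=0$ a.s., hence $u(x)=Y^x_0=h(x)$. The boundary parts of the viscosity definition are then trivially satisfied, because $h_i(x_0)-u_i(x_0)=0$.

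\emph{Interior inequalities.} First use the strong Markov property and uniqueness in \cref{thm:3.1} to get $Y^x_t=u(X^x_t)$ for $t\le\tau_x$. Then argue exactly as in \cref{thm:5.1}. Take $x_0\in G$ where $u_i-\phi$ has a local maximum, with $\phi(x_0)=u_i(x_0)$, and suppose $\bar{\mathcal L}\phi+g_i(\cdot,u,\nabla\phi\sigma)\le-\gamma$ on a ball $B(x_0,\delta)\subset G$. Let $\vartheta:=\inf\{t:|X^{x_0}_t-x_0|\ge\delta\}\wedge\delta$, which is positive and satisfies $\vartheta\le\tau_{x_0}$. Compare the BSDE for $(u_i(X_t),(Z_t)^i)$ on $[0,\vartheta]$ with the It\^o expansion of $\phi(X_t)$. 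On the ball, $n(\cdot)$ is bounded, so the Girsanov change of measure is legitimate. Taking expectations under the new measure yields $\phi(x_0)-u_i(x_0)\ge\gamma\,\E_{\Q}[\vartheta]>0$, a contradiction. The super-solution inequality is symmetric.
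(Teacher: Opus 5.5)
Your proposal follows the paper's proof in all three parts. Continuity of $u$ on $\bar G$ comes from the continuity of $x\mapsto\tau_x$ under $\Gamma=\partial G$ (the paper cites Proposition 5.76 of \citet{PardouxandRascanu2014}) together with \cref{thm:3.2} and the continuity of $X^x_\cdot$ in $x$; $u=h$ on $\partial G$ because $\tau_x=0$ there; and the interior contradiction uses the Markov property, It\^o's formula for $\phi(X^{x_0}_t)$ and Girsanov's theorem, the paper merely building $\int_0^t n^2(X^{x_0}_s)\,{\rm d}s\ge\delta$ into $\vartheta$ where you use boundedness of $n$ on the ball. The one place to tighten, which the paper also passes over, is the domination in your continuity step: since $g$ may grow arbitrarily in $y$, weighted $L^p$ bounds alone do not dominate $g(X^{x_n}_s,Y_s,\cdot)$, but \cref{pro2.02} with $r=t$, the Markov property, \eqref{sup} and \eqref{hb} give $\sup_{x\in\bar G}\sup_t|Y^x_t|<\infty$, which supplies the needed bound.
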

\begin{proof}
First of all, by Proposition 5.76 in \citet{PardouxandRascanu2014} we know that under the condition \eqref{closed}, the mapping $x\rightarrow \tau_x$ is almost surely continuous on $\bar {G}$. Then, by \cref{thm:3.2} and the continuity of $X_\cdot^{x}$ in $x$, we can deduce that $u(x):=Y_0^x$ is a continuous function on $\bar {G}$.

We only prove that $u$ is a viscosity sub-solution of PDE \eqref{PDE-1}. Let $1\leq i\leq k,$ $\phi\in C^2(\bar {G};\R)$ such that $u_i-\phi$ attains a local maximum at $x_0\in \bar {G}$. If $x_0\in \Gamma$, then $\tau_{x_0}=0$, and hence $u(x_0)=h(x_0)$. If $x_0\in G\cup (\partial G\cap \Gamma^c)$, then $\tau_{x_0}>0$. For the latter case, we assume without loss of generality that
$$
u_i(x_0)=\phi(x_0).\vspace{-0.1cm}
$$
We now assume that
$$\mathcal{\bar L}\phi(x_0)+g_i(x_0,u(x_0),(\nabla\phi\sigma)(x_0))<0,$$
and we will lead to a contradiction. In fact, by continuity let $\delta,\gamma>0$ such that whenever $|y-x_0|\leq \delta,$
\begin{align}\label{6.10}
\left\{\begin{aligned}
&u_i(y)\leq \phi(y);\\
&\mathcal{\bar L}\phi(y)+g_i(y,u(y),(\nabla\phi\sigma)(y))\leq -\gamma.
\end{aligned}\right.
\end{align}
Define the following stopping time:
$$\vartheta:=\inf\{t\geq 0:|X_t^{x_0}-x_0|+\int_0^t n^2(X_s^{x_0}){\rm d}s\geq \delta\}\wedge \tau_{x_0}\wedge \delta>0.$$
By virtue of the Markov property and BSDE \eqref{BSDE-3}, we deduce that $(\bar {Y}_t,\bar {Z}_t)_{t\in [0,\vartheta]}:=((Y^{x_0}_t)^i, (Z^{x_0}_t)^i)_{t\in [0,\vartheta]}$ is an adapted solution of the following one-dimensional BSDE:
\begin{align}\label{6.11}
\bar Y_t=u_i(X_{\vartheta}^{x_0})+\int_t^{\vartheta} g_i(X_s^{x_0},u(X_s^{x_0}),\bar Z_s){\rm d}s-\int_t^{\vartheta} \bar Z_s{\rm d}B_s,\ \ t\in [0,\vartheta].
\end{align}
On the other hand, by It\^{o}'s formula we derive that
$(\widetilde{Y}_t,\widetilde{Z}_t)_{t\in [0,\vartheta]}:=(\phi(X_t^{x_0}),
(\nabla\phi\sigma)(X_t^{x_0}))_{t\in [0,\vartheta]}$ is an adapted solution of the following BSDE:
\begin{align}\label{BSDE5.27}
\widetilde{Y}_t=\phi(X_{\vartheta}^{x_0})-
\int_t^{\vartheta} \mathcal{\bar L}\phi(X_{s}^{x_0}){\rm d}s-\int_t^{\vartheta} \widetilde{Z}_s{\rm d}B_s,\ \ t\in [0,\vartheta].
\end{align}
Finally, in light of \eqref{6.10}-\eqref{BSDE5.27}, the definition of the stopping time $\vartheta$, Girsanov's theorem and the Lipschitz continuity for $g$ in $z$, by a similar argument as that employed in the proof of \cref{thm:5.1} we can deduce that $\widetilde{Y}_0>\bar Y_0$, i.e., $\phi(x_0)>u_i(x_0) $, which yields a contradiction. The proof is complete.
\end{proof}

\begin{rmk}\label{rmk:5.6}
It is obvious that in contrast to Theorem 3.20 of \citet{Li2025}, \cref{thm:5.4} requires weaker condition (namely \eqref{hb}) due to the fact that the function $m(\cdot)$ appearing in \eqref{condition-2} can take values in $\R$. We also mention that some extra conditions are usually required to guarantee the uniqueness of viscosity solutions for PDE \eqref{PDE'} and PDE \eqref{PDE-1}. See for example \cite{BarlesBurdeau(1995),BarlesMurat(1995),CrandallIshiiLions1992BAMS} for more details.
\end{rmk}

\section{Dual representation of unbounded dynamic concave utilities \vspace{0.2cm}}
\setcounter{equation}{0}

It is widely known that a dynamic concave utility is a popular concept in financial mathematics and has received much attention. See \citet{DelbaenHuBao2011,DelbaenPengRosazza Gianin2010,Drapeau2016AIHPPS} and \citet{FanHuTang2025} as well as their references for details. In this section, with the help of \cref{thm:3.1} and \cref{thm:3.3}, we will state and prove a dual representation result of an unbounded dynamic concave utility defined on the general weighted space $L_\tau^p(\rho_\cdot;\R)$, see \cref{thm:6.1}, via the weighted solution of a scalar BSDE with possibly unbounded stochastic coefficients. This result allows us to compute this unbounded dynamic concave utility by solving a BSDE via numerical algorithms such as Monte Carlo method.

Let us first recall that $\tau$ is an $(\F_t)$-stopping time taking values in $[0,+\infty]$, both $p>1$ and $\theta>1$ are given constants, and $\rho_\cdot\in \R$, $\mu_\cdot\in \R$ and $\nu_\cdot\in\R_+$ are three $(\F_t)$-progressively measurable processes satisfying $\int_0^\tau (|\rho_t|+|\mu_t|+\nu_t^2){\rm d}t<+\infty$ and
$$
\rho_\cdot\geq \mu_\cdot+\frac{\theta}{2[1\wedge(p-1)]}\nu_\cdot^2.
$$
In what follows, let $(h_t)_{t\in[0,\tau]}$ be an $(\F_t)$-progressively measurable nonnegative real-valued process, and assume that the random core function
$$
f(\omega,t,q):\Omega \times [0,\tau]\times \R^{1\times d}\mapsto \R\cup\{+\infty\}
$$
is $(\F_t)$-progressively measurable for each $q\in\R^{1\times d}$ and satisfies the following assumption:
\begin{enumerate}
\renewcommand{\theenumi}{(A3)}
\renewcommand{\labelenumi}{\theenumi}
\item\label{A:A3} ${\rm d}\mathbb{P}\times{\rm d} t-a.e.$, $f(\omega,t,\cdot)$ is lower semi-continuous and convex such that \\
    $|f(\omega,t,0)|\leq h_t(\omega)$, $f(\omega,t,q)\geq -h_t(\omega)$ for any $q\in\R^{1\times d},$ and $f(\omega,t,q)\equiv+\infty$ when $|q|>\nu_t(\omega)$.
\end{enumerate}

For each $\F_\tau$-measurable real-valued random variable $\xi$, the random core function $f$, the processes $\mu_\cdot$ and $\nu_\cdot$, we define the following process space:
\begin{align*}
\hcal(\xi,f,\mu_\cdot,\nu_\cdot):=\bigg\{\ \ &(q_t)_{t\in[0,\tau]}~\text{is an } (\F_t)\text{-progressively measurable~}\R^{1\times d} \text{-valued process}:\nonumber\\
\ \ &\as,\ \ |q_t|\leq \nu_t\ \ \text{and} \\
\ \ &\E_{\Q^q}\Big[e^{\int_0^\tau \mu_r\dif r}|\xi|+\int_{0}^{\tau}e^{\int_0^s \mu_r\dif r}(h_s+|f(s,q_s)|)\dif s\Big]<+\infty \\
\ \ &\text{with}\ \  L_t^q:=\exp\Big(\int_{0}^{t}q_s\dif B_s-\frac{1}{2}\int_{0}^{t}|q_s|^2\dif s\Big), ~t\in[0,\tau]\ \ \text{being a uniformly}\\
\ \ &\text{integrable martingale, and}\ \  \frac{\dif \Q^q}{\dif \mathbb{P}}:=L_{\tau}^q \nonumber\ \ \bigg\}
\end{align*}
and state the main result of this subsection as follows.

\begin{thm}\label{thm:6.1}
Assume that $\xi\in L_\tau^p(\rho_\cdot;\R)$, $\int_0^\tau e^{\int_{0}^{s}\rho_r{\rm d}r}h_s{\rm d}s\in L_\tau^p(0;\R_+)$ and the random core function $f$ satisfies assumption \ref{A:A3}. Then, the process $\{U_t(\xi),t\in[0,\tau]\}$ defined by
\begin{align}\label{Ut}
U_{t\wedge \tau}(\xi):=e^{-\int_0^{t\wedge \tau}\mu_r\dif r }\essinf\limits_{q\in\hcal(\xi,f,\mu_\cdot,\nu_\cdot)}\E_{\Q^q}\Big[e^{\int_0^\tau \mu_r\dif r}\xi+\int_{t\wedge \tau}^{\tau}e^{\int_0^s \mu_r\dif r}f(s,q_s)\dif s\big|\F_{t\wedge \tau}\Big], \ \ t\geq 0
\end{align}
can be represented as the value process $y_\cdot$ of the unique weighted $L^p$ solution $(y_t,z_t)_{t\in[0,\tau]}$ in the space of $H_\tau^p(\rho_\cdot;\R\times \R^{1\times d})$ of the following BSDE
\begin{align}\label{BSDE*}
 y_t=\xi+\int_t^\tau [\mu_s y_s-g(s,-z_s)] {\rm d}s-\int_t^\tau z_s{\rm d}B_s, \ \ t\in[0,\tau],
\end{align}
where the random function $g$ is the Legendre-Fenchel transform of $f$, i.e.,
\begin{align}\label{g_1}
g(\omega,t,z):=\sup_{q\in\R^{1\times d}}\big(q\cdot z-f(\omega,t,q)\big),\ \ (\omega,t,z)\in\Omega\times[0,\tau]\times\R^{1\times d}.
\end{align}
Consequently, the operator $\{U_t(\cdot),t\in[0,\tau]\}$ constitutes a dynamic concave utility function defined on the space of $L_\tau^p(\rho_\cdot;\R)$, i.e., it satisfies the following properties: for each $t\geq 0$,
\begin{itemize}
\item [\textbullet]\text{Monotonicity:} $U_{t\wedge \tau}(\xi)\geq U_{t\wedge \tau}(\eta)$ for each $\xi, \eta\in L_\tau^p(\rho_\cdot;\R)$ such that $\xi\geq\eta$;
\item [\textbullet]\text{Concavity:} $U_{t\wedge \tau}(\lambda\xi+(1-\lambda)\eta)\geq \lambda U_{t\wedge \tau}(\xi)+(1-\lambda) U_{t\wedge \tau}(\eta)$ for all $\xi, \eta\in L_\tau^p(\rho_\cdot;\R)$ and $\lambda\in(0,1)$;
\item [\textbullet]\text{Time consistency:} $U_{s\wedge \tau}(\xi)=U_{s\wedge \tau}(U_{t\wedge \tau}(\xi))$ for each $\xi\in L_\tau^p(\rho_\cdot;\R)$ and each $s\in [0,t]$.
\end{itemize}
Moreover, let $\tilde q_s\in\partial g(s,-z_s), \ s\in[0,\tau]$. By this notation we mean that $(\tilde q_t)_{t\in [0,\tau]}$ is an $(\F_t)$-progressively measurable $\R^{1\times d}$-valued process such that for each $s\in[0,\tau]$ and $u\in \R^{1\times d}$,
\begin{align*}
g(s,u)-g(s,-z_s)\geq \tilde q_s\cdot (u+z_s)\ \ \ {\rm or \ equivalently}\ \ \ \tilde q_s\cdot u-g(s,u)\leq -\tilde q_s\cdot z_s-g(s,-z_s).
\end{align*}
Then, the infimum in \eqref{Ut} is attained for $\tilde q_\cdot$ provided that anyone of the following three conditions holds:
\begin{itemize}
\item [(i)] $\E[\exp\left(\frac{\bar p}{2}\int_0^\tau \nu_t^2 \dif t\right)]<+\infty$ for any $\bar p>1$;

\item [(ii)] $\E[\exp\left(\frac{1}{2}\int_0^\tau \nu_t^2 \dif t\right)]<+\infty$ and $\int_0^\tau e^{\int_{0}^{s}\mu_r{\rm d}r}h_s{\rm d}s\leq K$ for some constant $K>0$;

\item [(iii)] the process $L_t^{\tilde q}:=\exp(\int_{0}^{t}\tilde q_s\dif B_s-\frac{1}{2}\int_{0}^{t}|\tilde q_s|^2\dif s), \ t\in[0,\tau]$
is a uniformly integrable martingale and $\int_0^\tau e^{\int_{0}^{s}\mu_r{\rm d}r}h_s{\rm d}s\leq K$ for some constant $K>0$.\vspace{0.2cm}
\end{itemize}
\end{thm}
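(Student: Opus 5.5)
The plan has three parts: first verify that BSDE \eqref{BSDE*} falls under \cref{thm:3.1}, then compare it with the linear BSDE associated with each $q\in\hcal(\xi,f,\mu_\cdot,\nu_\cdot)$, and finally deduce the listed properties of $U$ from the representation.

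\textbf{Well-posedness of \eqref{BSDE*}.} Set $G(s,y,z):=\mu_s y-g(s,-z)$. By \ref{A:A3}, $g(s,\cdot)$ is finite and convex, and $g(s,z)\ge -f(s,0)\ge -h_s$. Since $f=+\infty$ outside $\{|q|\le\nu_s\}$ and $f\ge -h_s$, we also get $g(s,z)\le \nu_s|z|+h_s$. In particular $|g(s,0)|\le h_s$, and $g(s,\cdot)$ is $\nu_s$-Lipschitz because it is a supremum of affine maps with slopes $|q|\le\nu_s$.

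Consequently $G$ satisfies \ref{A:H4} with $\mu_\cdot$ (with equality) and \ref{A:H5} with $\nu_\cdot$, and \ref{A:H2} is obvious. \ref{A:H1} holds because $|G(s,0,0)|\le h_s$ and $\int_0^\tau e^{\int_0^s\rho_r\dif r}h_s\dif s\in L^p$. For \ref{A:H3} we have $\psi(t,r)=|\mu_t|r$, so we take $\alpha_t:=e^{-\int_0^t|\mu_s|\dif s}$. Then $\essinf\alpha>0$ because $\int_0^\tau|\mu|<\infty$, and $\E\int_0^\tau\alpha_t^2|\mu_t|r\,\dif t\le r$. \cref{thm:3.1} therefore yields a unique $(y,z)\in H^p_\tau(\rho_\cdot;\R\times\R^{1\times d})$.

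\textbf{Lower bound $e^{\int_0^t\mu}y_t\le$ each conditional expectation.} Fix $q\in\hcal$. By the Fenchel inequality, $-g(s,-z_s)\le q_s z_s+f(s,q_s)$. Put $Y_t:=e^{\int_0^t\mu_r\dif r}y_t$, so that
\[
\dif Y_t=-e^{\int_0^t\mu}(-g(t,-z_t))\dif t+e^{\int_0^t\mu}z_t\dif B_t .
\]
Under $\Q^q$, with $B^q:=B-\int_0^\cdot q\,\dif s$, this gives
\[
Y_t\le Y_{\sigma}+\int_t^{\sigma}e^{\int_0^s\mu}f(s,q_s)\dif s-\int_t^{\sigma}e^{\int_0^s\mu}z_s\dif B^q_s
\]
along localizing stopping times $\sigma_n\uparrow\tau$. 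Taking $\E_{\Q^q}[\cdot|\F_{t\wedge\tau}]$ and letting $n\to\infty$ requires uniform integrability of $\{Y_{\sigma_n}\}$ under $\Q^q$. This is the main obstacle. My plan is to use the weighted integrability $y\in S^p_\tau(\rho_\cdot)$ together with a Hölder/Girsanov argument as in the inspiration example of Section 2.2. Since $|q|\le\nu$ and $\rho\ge\mu+\frac{\theta}{2[1\wedge(p-1)]}\nu^2$, one should have $\E_{\Q^q}[\sup_t e^{\int_0^t\mu}|y_t|]\le C\|y\|_{p;\rho,c}$, provided the conditional Hölder estimate can be run with exponent $\tilde q=1+\theta/(p-1)$ as in that example. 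If this route fails, I will use a Fatou-type argument instead, combined with the integrability built into $\hcal$. The resulting inequality $Y_{t\wedge\tau}\le \E_{\Q^q}[\cdots|\F_{t\wedge\tau}]$ shows $y\le U$.

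\textbf{Attainment, equality, and the properties of $U$.} Choose a measurable selection $\tilde q_s\in\partial g(s,-z_s)$, which exists since $g(s,\cdot)$ is finite and convex, and note $|\tilde q|\le\nu$. For this choice the Fenchel inequality becomes an equality. If $\tilde q\in\hcal$, the same computation as above gives equality, so the infimum is attained and $U=y$. In general I will approximate $\tilde q$ by $\tilde q\mathbf 1_{[0,\sigma_n]}$, with $\sigma_n$ localizing the stochastic exponential. These approximating densities belong to $\hcal$, and taking the limit via \cref{thm:3.3}-type stability on the truncated problems shows that the essential infimum equals $y$.

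Under (i), (ii) or (iii), Novikov's criterion (or the assumption itself in (iii)) makes $L^{\tilde q}$ a uniformly integrable martingale. The required $\Q^{\tilde q}$-integrability then follows from Hölder's inequality with the exponential moments of $\nu$ in (i), and from the boundedness $K$ together with $f(s,\tilde q_s)=-\tilde q_s z_s-g(s,-z_s)$ in (ii) and (iii). Hence $\tilde q\in\hcal$ and the infimum is attained at $\tilde q$.

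Finally, monotonicity and concavity follow directly from the representation, since $U$ is an infimum of affine increasing functionals of $\xi$ and $\hcal$ does not depend on $\xi$ in an essential way. Time consistency follows from uniqueness in \cref{thm:3.1} applied on $[0,t\wedge\tau]$ with terminal value $y_{t\wedge\tau}$.
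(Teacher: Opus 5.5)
Your architecture is the paper's: well-posedness of \eqref{BSDE*} via \cref{thm:3.1}, then $y\le U$ by Fenchel, Girsanov and localization, then $y\ge U$ by approximation plus \cref{thm:3.3}, then attainment once $\tilde q\in\hcal(\xi,f,\mu_\cdot,\nu_\cdot)$. Your $\alpha_t=e^{-\int_0^t|\mu_s|\dif s}$ is actually a better choice for \ref{A:H3} than the paper's $e^{-\int_0^t\mu_r\dif r}$. The paper's choice leads to $r\,\E\int_0^\tau|\mu_t|e^{-2\int_0^t\mu_r\dif r}\dif t$, which can be infinite when $\mu_\cdot<0$.

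The step you leave open carries the argument, and neither of your routes closes it. The bound $\E_{\Q^q}[\sup_t e^{\int_0^t\mu_r\dif r}|y_t|]\le C\|y_\cdot\|_{p;\rho_\cdot,c}$ does not follow from the Section 2.2 computation, for two reasons. First, that example used an explicit representation of $y$, which you do not have before the limit is justified. Second, with the supremum inside, $L^q_\tau$ and the weight sit at different, non-stopping times, so the supermartingale cancellation is lost. The Fatou fallback goes the wrong way: it bounds $\liminf_n\E_{\Q^q}[Y_{\sigma_n}|\F_{t\wedge\tau}]$ from below, but $y\le U$ needs an upper bound.

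What is needed, and what the paper proves, is a bound at stopping times only. Put $\kappa:=\frac{\theta}{2[1\wedge(p-1)]}$, $a:=1+2\kappa$ and $p':=\frac{ap}{a+p}\in(1,p)$, so that $\frac{pp'}{p-p'}=a$. Since $|q_\cdot|\le\nu_\cdot$ gives $\mu_\cdot\le\rho_\cdot-\kappa|q_\cdot|^2$, Hölder's inequality with exponents $p/p'$ and $p/(p-p')$ yields, for every stopping time $\sigma\le\tau$,
\[
\E\Big[\big(e^{\int_0^\sigma\mu_r\dif r}|y_\sigma|L^q_\sigma\big)^{p'}\Big]\le\Big(\E\big[e^{p\int_0^\sigma\rho_r\dif r}|y_\sigma|^p\big]\Big)^{\frac{p'}{p}}\Big(\E\big[e^{a\int_0^\sigma q_r\dif B_r-\frac{a^2}{2}\int_0^\sigma|q_r|^2\dif r}\big]\Big)^{\frac{p-p'}{p}}\le\|y_\cdot\|_{p;\rho_\cdot,c}^{p'}.
\]
Combined with $\E_{\Q^q}[|Y_{\sigma_n}|\mathbf{1}_A]=\E[|Y_{\sigma_n}|L^q_{\sigma_n}\mathbf{1}_A]$ for $A\in\F_{\sigma_n}$, this gives uniform integrability of $\{Y_{\sigma_n}\}$ under $\Q^q$. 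The relevant exponent is $1+\frac{\theta}{1\wedge(p-1)}$, not $1+\frac{\theta}{p-1}$, when $p>2$. Taking $\sigma=\tau$ also gives $\E_{\Q^{\tilde q}}[e^{\int_0^\tau\mu_r\dif r}|\xi|]<\infty$ in cases (ii) and (iii). Your attainment sketch needs this and does not address it.

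For $y\ge U$ you take a different route from the paper. The paper truncates the constraint, setting $\nu^n_t=\nu_t\wedge(ne^{-t})$ and $f_n=f$ on $\{|q|\le\nu^n_t\}$. It solves the BSDE driven by the conjugate $g_n$ and uses the bounded-$\int_0^\tau\nu_t^2\dif t$ attainment step to write $y^n$ as the value of some $\bar q^n\in\hcal(\xi,f_n,\mu_\cdot,\nu^n_\cdot)\subset\hcal(\xi,f,\mu_\cdot,\nu_\cdot)$. You instead truncate the candidate optimizer in time, $q^n=\tilde q\mathbf{1}_{[0,\sigma_n]}$. Its value solves the linear BSDE with generator $\mu_sy+q^n_sz+f(s,q^n_s)$. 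Along $(y_s,z_s)$ this differs from the generator of \eqref{BSDE*} only by $\mathbf{1}_{s>\sigma_n}(f(s,0)+g(s,-z_s))$, of modulus at most $\mathbf{1}_{s>\sigma_n}(2h_s+\nu_s|z_s|)$.

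This route is viable and avoids $f_n$, $g_n$ and the convergence $g_n\to g$, provided you do two things:
\begin{itemize}
\item Take $\sigma_n:=\inf\{t:\int_0^t|\tilde q_s|^2\dif s\ge n\}\wedge\tau$, so Novikov gives $q^n\in\hcal$ and $\sigma_n=\tau$ eventually.
\item Do not apply \cref{thm:3.3} with the weight $\rho_\cdot$. There $\int_{\sigma_n}^\tau e^{\int_0^s\rho_r\dif r}\nu_s|z_s|\dif s$ is controlled only through $(\int_0^\tau\nu_s^2\dif s)^{1/2}$, which has no moments. Use the paper's device \eqref{5.12} instead: work with $\rho'_\cdot=\rho_\cdot-\frac{\theta'}{2[1\wedge(p-1)]}\nu_\cdot^2$, $\theta'\in(0,\theta-1)$, for which $\theta-\theta'>1$ and the error is dominated by $2\int_0^\tau e^{\int_0^s\rho_r\dif r}h_s\dif s$ plus a constant times $(\int_0^\tau e^{2\int_0^s\rho_r\dif r}|z_s|^2\dif s)^{1/2}\in L^p$.
\end{itemize}

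Finally, $\hcal(\xi,f,\mu_\cdot,\nu_\cdot)$ does depend on $\xi$, so ``an infimum of affine increasing functionals'' needs one more remark. The approximating controls (with $\int_0^\tau|q^n_s|^2\dif s\le n$) lie in $\hcal(\eta,f,\mu_\cdot,\nu_\cdot)$ for every $\eta\in L^p_\tau(\rho_\cdot;\R)$. Alternatively, use comparison for \eqref{BSDE*}.
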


\begin{rmk}\label{rmk:5.3}
Two closely related results to \cref{thm:6.1} were obtained in \citet[Theorem 3.2]{DelbaenPengRosazza Gianin2010} and
\citet[(v) of Theorem 3.2]{FanHuTang2025}, where $\tau$ is a finite positive constant, both $\mu_\cdot$ and $\nu_\cdot$ are nonnegative constants, and $\xi$ belongs to the space of $L^\infty(\Omega,\F_\tau,\mathbb{P};\R)$ or a more general space smaller than $L^1_\tau(0;\R)$ but bigger than $\cap_{q>1} L^q_\tau (0;\R)$. It should be emphasized that in light of \ref{A:A3} and \eqref{Ut}, the process $\{U_t(\xi),t\in[0,\tau]\}$ in \cref{thm:6.1} admits a more general form than those in \cite{DelbaenPengRosazza Gianin2010,FanHuTang2025} since $\mu_\cdot\in \R$ and $\nu_\cdot\in \R_+$ can be two unbounded stochastic processes without any moment integrability. In particular, noticing that a random variable $\xi$ belonging to $L^p_\tau(\rho_\cdot,\R)$ does not necessarily belong to $L^1_\tau(0;\R)$ in the case that $\rho_\cdot$ is a process which may take negative values, we can conclude that the dual representation in \cref{thm:6.1} holds still for some non-integrable random variables under some certain scenarios. Readers are also referred to \citet{DelbaenHuBao2011}, \citet{Drapeau2016AIHPPS} and \citet{FanHuTang2025} for the dual representation results of dynamic concave utilities under various conditions on the core function $f$.\vspace{0.2cm}
\end{rmk}

\begin{proof}[\bf {Proof of \cref{thm:6.1}}]
To simplify the calculations, we assume without loss of generality that $d = 1$. First of all, by virtue of \cref{thm:3.1} we verify that for each $\xi\in L_\tau^p(\rho_\cdot;\R)$, BSDE \eqref{BSDE*} admits a unique weighted $L^p$ solution $(y_t,z_t)_{t\in[0,\tau]}$ such that
\begin{align}\label{supyt1}
(y_\cdot,z_\cdot)\in H_\tau^p(\rho_\cdot;\R\times\R).
\end{align}
Indeed, in light of \eqref{g_1} and \ref{A:A3}, we deduce that for each $t\in [0,\tau]$ and $z\in\R$,
\begin{align}\label{eq:6.6}
-h_t(\omega)\leq g(\omega,t,z)=\sup_{q\in\R}\big(q z-f(\omega,t,q)\big)=\sup_{|q|\leq \nu_t(\omega)}\big(q z-f(\omega,t,q)\big)
\leq \nu_t(\omega)|z|+h_t(\omega).
\end{align}
It then follows from the convexity of $g(\omega,t,\cdot)$ that $\as$, for each $\lambda\in (0,1)$ and each $z_1,z_2\in \R$,
\begin{align*}
g(\omega,t,z_1)&=\Dis g\left(\omega,~t,~\lambda z_2+(1-\lambda)\frac{z_1-\lambda z_2}{1-\lambda}\right)\\
&\leq  \Dis \lambda g(\omega,t,z_2)+(1-\lambda)\left(\nu_t(\omega) \frac{|z_1-\lambda z_2|}{1-\lambda}+h_t(\omega)\right)\\
&= \Dis \lambda g(\omega,t,z_2)+(1-\lambda) h_t(\omega)+\nu_t(\omega) |z_1-\lambda z_2|,
\end{align*}
and then, by letting $\lambda\To 1$ in the last inequality, we obtain
\begin{align}\label{eq:6.7}
|g(\omega,t,z_1)-g(\omega,t,z_2)|\leq \nu_t(\omega) |z_1-z_2|.
\end{align}
In light of \eqref{eq:6.6} and \eqref{eq:6.7} along with the assumption of $\E\left[\left(\int_0^\tau e^{\int_{0}^{s}\rho_r{\rm d}r}h_s{\rm d}s\right)^p\right]<+\infty$, it can be easily verified that the generator of BSDE \eqref{BSDE*}, namely
\begin{align}\label{eq:6.8}
\tilde{g}(\omega,t,y,z):=\mu_t(\omega) y-g(\omega,t,-z),\ \ (\omega,t,y,z)\in\Omega\times[0,\tau]\times\R\times\R
\end{align}
satisfies assumptions \ref{A:H1}-\ref{A:H5} with $\alpha_t:=e^{-\int_0^t \mu_r \dif r}, \ t\in[0,\tau]$. Thus, the desired existence and uniqueness result follows immediately from \cref{thm:3.1}.

Next, we prove the dual representation in the following three steps. To do this, we need to verify that for each $t\in[0,\tau]$, $U_t(\xi)$ defined by \eqref{Ut} is identical to $y_t$, where $(y_t,z_t)_{t\in[0,\tau]}$ is the unique weighted $L^p$ solution of BSDE \eqref{BSDE*} in the space of $H_\tau^p(\rho_\cdot;\R\times\R)$. Note that from the assumptions of \cref{thm:6.1}, it can be easily derived that $0\in \hcal(\xi,f,\mu_\cdot,\nu_\cdot)$, and then the space of $\hcal(\xi,f,\mu_\cdot,\nu_\cdot)$ is non-empty.\vspace{0.2cm}

{\bf First Step:} In this step, we aim to obtain that for each $t\geq 0$,
\begin{align}\label{5.02*}
y_{t\wedge \tau}\leq U_{t\wedge \tau}(\xi):=e^{-\int_0^{t\wedge \tau}\mu_r\dif r }\essinf\limits_{q\in\hcal(\xi,f,\mu_\cdot,\nu_\cdot)}\E_{\Q^q}\Big[e^{\int_0^\tau \mu_r\dif r}\xi+\int_{t\wedge \tau}^{\tau}e^{\int_0^s \mu_r\dif r}f(s,q_s)\dif s\big|\F_{t\wedge \tau}\Big].
\end{align}
Indeed, for each $q_\cdot\in\hcal(\xi,f,\mu_\cdot,\nu_\cdot)$, we need to prove that for each $t\geq 0$,
\begin{align}\label{5.02}
y_{t\wedge \tau}\leq e^{-\int_0^{t\wedge \tau}\mu_r\dif r }\E_{\Q^q}\Big[e^{\int_0^\tau \mu_r\dif r}\xi+\int_{t\wedge \tau}^{\tau}e^{\int_0^s \mu_r\dif r}f(s,q_s)\dif s\big|\F_{t\wedge \tau}\Big].
\end{align}
According to \ref{A:A3} and \eqref{g_1}, by the dual representation of a convex function, we know that
\begin{align}\label{5.03}
f(\omega,s,q)=\sup_{z\in\R}\big(qz-g(\omega,s,z)\big),~~~(\omega,s,q)\in\Omega\times[0,\tau]\times\R.
\end{align}
For each $n\geq1$ and $t\in[0,\tau]$, define the following stopping time
$$
\tau_n^t:=\inf\Big\{s\geq t: \int_t^se^{2\int_0^u \mu_r \dif r}|z_u|^2\dif u\geq n\Big\}\wedge \tau.
$$
With the help of the exponential shift transformation and Girsanov's theorem, it follows from \eqref{BSDE*} and \eqref{5.03} that for each $n\geq1$ and $t\in [0,\tau]$,
\begin{align*}
e^{\int_0^{t}\mu_r\dif r}y_t&=e^{\int_0^{\tau_n^t}\mu_r\dif r }y_{\tau_n^t}-\int_t^{\tau_n^t} e^{\int_0^{s}\mu_r\dif r}g(s,-z_s)\dif s-\int_t^{\tau_n^t} e^{\int_0^s\mu_r\dif r}z_s \dif B_s\\
&=e^{\int_0^{\tau_n^t}\mu_r\dif r }y_{\tau_n^t}+\int_t^{\tau_n^t} e^{\int_0^{s}\mu_r\dif r}\left[-q_sz_s-g(s,-z_s)\right]\dif s-\int_t^{\tau_n^t} e^{\int_0^{s}\mu_r\dif r}z_s \dif B_s^q\\
&\leq e^{\int_0^{\tau_n^t}\mu_r\dif r }y_{\tau_n^t}+\int_t^{\tau_n^t} e^{\int_0^{s}\mu_r\dif r}f(s,q_s)\dif s-\int_t^{\tau_n^t} e^{\int_0^{s}\mu_r\dif r}z_s \dif B_s^q,
\end{align*}
where the shifted Brownian motion $B_s^q:=B_s-\int_0^s q_r \dif r, \ s\in[0,\tau]$ is a standard one-dimensional Brownian motion under the new probability measure $\Q^q$. Taking the conditional mathematical expectation with respect to $\F_{t\wedge \tau}$ under $\Q^q$ on both sides of the last inequality, we obtain that for each $n\geq 1$ and $t\geq 0$,
\begin{align}\label{5.04}
y_{t\wedge \tau}\leq e^{-\int_0^{t\wedge \tau}\mu_r\dif r}\E_{\Q^q}\Big[e^{\int_0^{\tau_n^t}\mu_r\dif r }y_{\tau_n^t}+\int_{t\wedge \tau_n^t}^{\tau_n^t}e^{\int_0^s \mu_r\dif r}f(s,q_s)\dif s\bigg|\F_{t\wedge \tau}\Big].
\end{align}
Furthermore, by virtue of $|q_\cdot|\leq \nu_\cdot$, $\rho_\cdot\geq \mu_\cdot+\frac{\theta}{2[1\wedge(p-1)]}\nu_\cdot^2$, H${\rm \ddot{o}}$lder's inequality and \eqref{supyt1}, we can verify that the process $(e^{\int_0^{s}\mu_r\dif r}|y_s|L_s^q)_{s\in[0,\tau]}$ is uniformly integrable. In fact, there exists a constant
$$
p':=\frac{(1+\frac{\theta}{1\wedge(p-1)})p}{1+\frac{\theta}{1\wedge(p-1)}+p}\in(1,p)
$$
such that for each $s\geq 0$,
\begin{align*}
&\E\left[\left(e^{\int_0^{s\wedge \tau}\mu_r\dif r}|y_{s\wedge \tau}|L_{s\wedge \tau}^q\right)^{p'}\right]\\
&\ \ =\E\left[{e^{p'\int_0^{s\wedge \tau}\mu_r\dif r}} e^{p'\int_0^{s\wedge \tau}\frac{\theta}{2[1\wedge(p-1)]}|q_r|^2 \dif r}|y_{s\wedge \tau}|^{p'}e^{p'\int_0^{s\wedge \tau} q_r\dif B_r-(\frac{p'}{2}+\frac{p'\theta}{2[1\wedge(p-1)]})\int_0^{s\wedge \tau} |q_r|^2\dif r}\right]\\
&\ \ \leq \E\left[{e^{p'\int_0^{s\wedge \tau}\rho_r\dif r}} |y_{s\wedge \tau}|^{p'} e^{p'\int_0^{s\wedge \tau} q_r\dif B_r-(\frac{p'}{2}+\frac{p'\theta}{2[1\wedge(p-1)]})\int_0^{s\wedge \tau} |q_r|^2\dif r}\right]\\
&\ \ \leq \left\{\E\left[{e^{p\int_0^{s\wedge \tau}\rho_r\dif r}} |y_{s\wedge \tau}|^{p}\right]\right\}^{\frac{p'}{p}} \left\{\E\left[e^{\frac{pp'}{p-p'}\int_0^{s\wedge \tau} q_r\dif B_r-\frac{p}{p-p'}(\frac{p'}{2}+\frac{p'\theta}{2[1\wedge(p-1)]})\int_0^{s\wedge \tau} |q_r|^2\dif r}\right]\right\}^{\frac{p-p'}{p}}\\
&\ \ =\left\{\E\left[e^{p\int_0^{s\wedge \tau}\rho_r\dif r} |y_{s\wedge \tau}|^{p}\right]\right\}^{\frac{p'}{p}} \left\{\E\left[e^{\frac{pp'}{p-p'}\int_0^{s\wedge \tau} q_r\dif B_r-\frac{p^{2}p'^{2}}{2(p-p')^2}\int_0^{s\wedge \tau} |q_r|^2\dif r}\right]\right\}^{\frac{p-p'}{p}}\\
&\ \ \leq\left\{\E\left[\sup_{s\in[0,\tau]}\left(e^{p\int_0^{s}\rho_r\dif r} |y_s|^{p}\right)\right]\right\}^{\frac{p'}{p}}<+\infty
\end{align*}
and then
\begin{align}\label{eq:6.11*}
\sup_{s\geq 0}\E\left[\left(e^{\int_0^{s\wedge \tau}\mu_r\dif r}|y_{s\wedge \tau}|L_{s\wedge \tau}^q\right)^{p'}\right]<+\infty.
\end{align}
Thus, in light of the last inequality and the fact that $q_\cdot\in \hcal(\xi,f,\mu_\cdot,\nu_\cdot)$, the desired assertion \eqref{5.02} and then \eqref{5.02*} follows immediately by sending $n$ to infinity in \eqref{5.04}.\vspace{0.2cm}

{\bf Second Step:} In this step, we assume further that $\int_0^{\tau}\nu_t^2 \dif t\leq K$ for some constant $K>0$ and prove that for any $\tilde{q}_s\in\partial g(s,-z_s),\ s\in[0,\tau]$, we have $\tilde{q}_\cdot\in \hcal(\xi,f,\mu_\cdot,\nu_\cdot)$ and
\begin{align}\label{5.05}
y_{t\wedge \tau}= e^{-\int_0^{t\wedge \tau}\mu_r\dif r}\E_{\Q^{\tilde{q}}}\Big[e^{\int_0^{\tau}\mu_r\dif r }\xi+\int_{t\wedge \tau}^{\tau}e^{\int_0^s \mu_r\dif r}f(s,\tilde{q}_s)\dif s\big|\F_{t\wedge \tau}\Big],\ \ t\geq 0.
\end{align}
First of all, since $\tilde{q}_s\in\partial g(s,-z_s),\ s\in[0,\tau]$, by \eqref{5.03} we have
\begin{align}\label{eq:6.13}
f(s,\tilde{q}_s)=\sup_{z\in\R}\big(\tilde{q}_s z-g(s,z)\big)=-\tilde{q}_{s}z_{s}-g(s,-z_s),\ \ s\in[0,\tau].
\end{align}
Then, in light of assumption \ref{A:A3}, we can conclude that $|\tilde{q}_\cdot|\leq \nu_\cdot$, $\int_0^{\tau}|\tilde{q}_t|^2 \dif t\leq K$, the stochastic exponential process $L_t^{\tilde{q}}:=\exp(\int_{0}^{t}\tilde{q}_s\dif B_s-\frac{1}{2}\int_{0}^{t}|\tilde{q}_s|^2\dif s), \ t\in[0,\tau]$ is a uniformly integrable martingale, and $L_\tau^{\tilde{q}}$ admits moments of any order. To show that $(\tilde{q}_t)_{t\in[0,\tau]}\in \hcal(\xi,f,\mu_\cdot,\nu_\cdot)$, it remains to prove that
\begin{align}\label{5.06}
\E_{\Q^{\tilde{q}}}\Big[e^{\int_0^\tau \mu_r\dif r}|\xi|+\int_{0}^{\tau}e^{\int_0^s \mu_r\dif r}(h_s+|f(s,\tilde{q}_s)|)\dif s\Big]<+\infty.
\end{align}
Indeed, from \ref{A:A3}, $\xi\in L_\tau^p(\rho_\cdot;\R)$, $\int_0^\tau e^{\int_{0}^{s}\rho_r{\rm d}r}h_s{\rm d}s\in L_\tau^p(0;\R_+)$ and H${\rm \ddot{o}}$lder's inequality it follows that
\begin{align}\label{5.07}
&\E_{\Q^{\tilde{q}}}\left[e^{\int_0^\tau \mu_r\dif r}|\xi|+\int_{0}^{\tau}e^{\int_0^s \mu_r\dif r}(h_s+f^{-}(s,\tilde{q}_s))\dif s\right]\leq 2\E\left[\left(e^{\int_0^\tau \rho_r\dif r}|\xi|+\int_{0}^{\tau}e^{\int_0^s \rho_r\dif r}h_s\dif s\right) L_\tau^{\tilde{q}}\right]\nonumber\\
& \ \ \ \leq 2\left\{\E\left[\left(e^{\int_0^\tau \rho_r\dif r}|\xi|+\int_{0}^{\tau}e^{\int_0^s \rho_r\dif r}h_s\dif s\right)^p\right]\right\}^{\frac{1}{p}} \left\{\E\left[(L_\tau^{\tilde{q}})^{\frac{p}{p-1}}
\right]\right\}^{\frac{p-1}{p}}<+\infty.
\end{align}
In light of \eqref{eq:6.13}, by an identical computation to \eqref{5.04} we derive that for each $n\geq 1$ and $t\geq 0$,
\begin{align}\label{5.08}
y_{t\wedge \tau}=e^{-\int_0^{t\wedge \tau}\mu_r\dif r}\E_{\Q^{\tilde{q}}}\Big[e^{\int_0^{\tau_n^t}\mu_r\dif r }y_{\tau_n^t}+\int_{t\wedge \tau_n^t}^{\tau_n^t}e^{\int_0^s \mu_r\dif r}f(s,\tilde{q}_s)\dif s\bigg|\F_{t\wedge \tau}\Big].
\end{align}
In light of \eqref{supyt1}, by H${\rm \ddot{o}}$lder's inequality we deduce that
\begin{align}\label{5.07*}
\begin{split}
\E_{\Q^{\tilde{q}}}\left[\sup_{t\in[0,\tau]}\left(e^{\int_0^t\mu_r\dif r}|y_t|\right)\right]
\leq \left\{\E\left[\sup_{t\in[0,\tau]}\left(e^{p\int_0^t \rho_r\dif r}|y_t|^p \right) \right]\right\}^{\frac{1}{p}}\left\{\E\left[\left(L_\tau^{\tilde{q}}\right)^{\frac{p}{p-1}}\right]\right\}^{\frac{p-1}{p}}<+\infty.
\end{split}
\end{align}
Then, by virtue of \eqref{5.08} with $t=0$, \eqref{5.07*} and \eqref{5.07} we obtain
\begin{align}\label{eq:6.18}
&\sup_{n\geq1}\E_{\Q^{\tilde{q}}}\Big[\int_0^{\tau_n^0} e^{\int_0^{s}\mu_r\dif r} f^{+}(s,\tilde{q}_s)\dif s\Big]\nonumber\\
& \ \ \ \leq |y_0|+\E_{\Q^{\tilde{q}}}\left[\sup_{t\in[0,\tau]}\left(e^{\int_0^t\mu_r\dif r}|y_t|\right)\right]+\E_{\Q^{\tilde{q}}}\Big[\int_0^{\tau} e^{\int_0^{s}\mu_r\dif r} f^{-}(s,\tilde{q}_s)\dif s\Big]<+\infty,
\end{align}
which along with Fatou's lemma and \eqref{5.07} yields \eqref{5.06}.

Finally, sending $n$ to infinity in \eqref{5.08} and applying Lebesgue's dominated convergence theorem, we get \eqref{5.05}. Furthermore, combining \eqref{5.02*} and \eqref{5.05} yields that $U_t(\xi)=y_t$ for each $t\in[0,\tau]$ under the condition of $\int_0^{\tau}\nu_t^2 \dif t\leq K$, and the infimum in \eqref{Ut} is attained for $q_s\in\partial g(s,-z_s), \ s\in[0,\tau]$.\vspace{0.2cm}

{\bf Third Step:} In this step, we remove the condition of $\int_0^{\tau}\nu_t^2 \dif t\leq K$ to prove the dual representation. In light of the assertion \eqref{5.02*} in the first step, we only need to prove that for each $t\geq 0$,
\begin{align}\label{5.041}
y_{t\wedge \tau}\geq e^{-\int_0^{t\wedge \tau}\mu_r\dif r }\essinf\limits_{q\in\hcal(\xi,f,\mu_\cdot,\nu_\cdot)}\E_{\Q^q}\Big[e^{\int_0^\tau \mu_r\dif r}\xi+\int_{t\wedge \tau}^{\tau}e^{\int_0^s \mu_r\dif r}f(s,q_s)\dif s\big|\F_{t\wedge \tau}\Big].
\end{align}

For each $n\geq 1$ and $t\in [0,\tau]$, we define $\nu_t^{n}:=\nu_t\wedge(ne^{-t})$ satisfying $\int_0^\tau (\nu_t^{n})^2\dif t\leq n^2$ and define
\begin{equation}\label{f^n}
f_n(\omega,t,q):=\begin{cases}
f(\omega,t,q), & |q|\leq \nu_t^n(\omega);\\
+\infty, & \text{otherwise},
\end{cases}
\end{equation}
which is an $(\F_t)$-progressively measurable real-valued process for each $q\in\R$. In light of \eqref{f^n}, it follows from \ref{A:A3} of the core function $f$ that for each $n\geq 1$, $f_n$ also satisfies \ref{A:A3} with $\nu_\cdot^n$ instead of $\nu_\cdot$. And, by an identical way to prove \eqref{eq:6.6}-\eqref{eq:6.8}, we can verify that for each $n\geq 1$, the following generator
\begin{align}\label{eq:6.20}
\tilde{g}_n(\omega,t,y,z):=\mu_t(\omega) y-g_n(\omega,t,-z),\ \ (\omega,t,y,z)\in\Omega\times[0,\tau]\times\R\times\R
\end{align}
satisfies assumptions \ref{A:H1}-\ref{A:H5} with
$\alpha_t:=e^{-\int_0^t \mu_r \dif r},~t\in[0,\tau]$,
where $g_n$ is defined as the Legendre-Fenchel transform of $f_n$, i.e.,
\begin{align}\label{g^n_1}
g_n(\omega,t,z):=\sup_{q\in\R}\big(qz-f_n(\omega,t,q)\big),~~~(\omega,t,z)\in\Omega\times[0,\tau]\times\R,
\end{align}
Then, it follows from \cref{thm:3.1} that for each $n\geq1$ and $\xi\in L_\tau^p(\rho_\cdot;\R)$, the following scalar BSDE
\begin{align}\label{BSDE*1}
 y_t^n=\xi+\int_t^\tau \tilde{g}_n(s,y_s,z_s){\rm d}s-\int_t^\tau z_s^n{\rm d}B_s, \ \ t\in[0,\tau],
\end{align}
admits a unique weighted $L^p$ solution $(y_t^n,z_t^n)_{t\in[0,\tau]}$ such that
\begin{align}\label{5.10*}
 (y_\cdot^n,z_\cdot^n) \in H_\tau^p(\rho_\cdot;\R\times\R).
\end{align}
Furthermore, for each $n\geq 1$, let us set $\bar{q}_s^n\in\partial g_n(s,-z_s^n),\ s\in [0,\tau]$. By the conclusion of the second step we know that for each $n\geq 1$ and $t\geq 0$, $\bar{q}_\cdot^n\in\hcal(\xi,f_n,\mu_\cdot,\nu^n_\cdot)$ and
\begin{align}\label{5.10}
y_{t\wedge \tau}^n= e^{-\int_0^{t\wedge \tau}\mu_r\dif r } \E_{\Q^{\bar{q}^{n}}}\Big[e^{\int_0^\tau \mu_r\dif r}\xi+\int_{t\wedge \tau}^{\tau}e^{\int_0^s \mu_r\dif r}f_n(s,\bar{q}_s^n)\dif s\bigg|\F_{t\wedge \tau}\Big].
\end{align}
Note that $\hcal(\xi,f_n,\mu_\cdot,\nu^n_\cdot)\subset\hcal(\xi,f,\mu_\cdot,\nu_\cdot)$ since for any $q_\cdot\in \hcal(\xi,f_n,\mu_\cdot,\nu^n_\cdot)$, we have $|q_\cdot|\leq \nu^n_\cdot\leq\nu_\cdot$ and then it follows from \eqref{f^n} that $f_n(t,q_t)=f(t,q_t)$. By \eqref{5.10} we derive that for each $n\geq 1$ and $t\geq 0$,
\begin{align}\label{5.11}
y_{t\wedge \tau}^n\geq e^{-\int_0^{t\wedge \tau}\mu_r\dif r }\essinf\limits_{q\in\hcal(\xi,f,\mu_\cdot,\nu_\cdot)}\E_{\Q^q}\Big[e^{\int_0^\tau \mu_r\dif r}\xi+\int_{t\wedge \tau}^{\tau}e^{\int_0^s \mu_r\dif r}f(s,q_s)\dif s\bigg|\F_{t\wedge \tau}\Big].
\end{align}

Next, in light of \eqref{5.11}, in order to obtain the desired inequality \eqref{5.041} it suffices to prove that there exists a subsequence of $\{y^n_\cdot\}$, denoted still by $\{y^n_\cdot\}$, such that
\begin{align}\label{5.131}
\lim\limits_{n\rightarrow\infty}\sup_{t\in[0,\tau]}|y_t^n-y_t|=0.
\end{align}
First, by \eqref{g^n_1}, \eqref{f^n}, \eqref{g_1} and \ref{A:A3} we know that for each $n\geq1$ and $(\omega,t,z)\in \Omega\times [0,\tau]\times\R$,
\begin{align}\label{5.111}
g_n(\omega,t,z)=\sup_{|q|\leq\nu_t^n(\omega)}\big(q z-f(\omega,t,q)\big)
\end{align}
and
\begin{align}\label{5.112}
g(\omega,t,z)=\sup_{|q|\leq\nu_t(\omega)}\big(qz-f(\omega,t,q)\big).
\end{align}
It follows from assumption \ref{A:A3} that there exists two $(\F_t)$-progressively measurable real-valued processes
$$
a_t(\omega):\Omega \times [0,\tau]\mapsto \R_{-}\ \ \ {\rm and}\ \ \  b_t(\omega):\Omega \times [0,\tau]\mapsto \R_{+}
$$
such that $f(\omega,t,q)<+\infty$ for any $q\in \D:=(a_t(\omega),b_t(\omega))\subset (-\nu_t(\omega),\nu_t(\omega))$, and $f(\omega,t,q)=+\infty$ when $q\in\D^c\backslash\{a_t(\omega),b_t(\omega)\}$. Then, by denoting $\D^n\triangleq (a_t(\omega)\vee (-ne^{-t}),b_t(\omega)\wedge (ne^{-t}))$ and in light of \eqref{5.111} and \eqref{5.112}, we have for each $n\geq 1$ and $t\in [0,\tau]$,
\begin{align}\label{eq:6.31}
g_n(\omega,t,z)=\sup_{
q\in\D^n}\big(qz-f(\omega,t,q)\big)~~~\text{and}~~~
g(\omega,t,z)=\sup_{q\in\D}\big(qz-f(\omega,t,q)\big),
\end{align}
which implies that for each $t\in [0,\tau]$,
$g_n(t,-z_t)\rightarrow g(t,-z_t)$ as $n\rightarrow\infty$ and, in light of \eqref{eq:6.8} and \eqref{eq:6.20},
\begin{align}\label{5.113}
\tilde{g}_n(t,y_t,z_t) \rightarrow \tilde{g}(t,y_t,z_t)\ \ \text{as}\ \ n\rightarrow\infty.
\end{align}
On the other hand, setting $\theta'\in(0,\theta-1)$ and $\rho'_\cdot:=\rho_\cdot-\frac{\theta'}{2[1\wedge(p-1)]}\nu_\cdot^2$, by assumption \ref{A:H5}, \eqref{eq:6.6} and H${\rm \ddot{o}}$lder's inequality we can deduce that for each $n\geq 1$,
\begin{align}\label{5.12}
&\int_0^\tau e^{ \int_0^t\rho'_r{\rm d}r}|\tilde{g}_n(t,y_t,z_t)-\tilde{g}(t,y_t,z_t)|{\rm d}t=\int_0^\tau e^{ \int_0^t\rho'_r{\rm d}r}|g_n(t,-z_t)-g(t,-z_t)|{\rm d}t\nonumber\\
&\ \ \ \leq \int_0^\tau e^{ \int_0^t\rho'_r{\rm d}r}|g_n(t,0)|{\rm d}t+\int_0^\tau e^{ \int_0^t\rho'_r{\rm d}r}|g(t,0)|{\rm d}t+2\int_0^\tau e^{ \int_0^t\rho'_r{\rm d}r}\nu_t |z_t|{\rm d}t\nonumber\\
&\ \ \ \leq2 \int_0^\tau e^{\int_0^t\rho_r{\rm d}r}h_t{\rm d}t+2 \int_0^\tau e^{\int_0^t(\rho_r-\frac{\theta'}{2[1\wedge(p-1)]}\nu_r^2){\rm d}r}\nu_t|z_t|{\rm d}t\\
&\ \ \ \leq2 \int_0^\tau e^{\int_0^t\rho_r{\rm d}r}h_t{\rm d}t+\frac{2[1\wedge(p-1)]}{\theta'} \left(\int_0^\tau e^{2\int_0^t\rho_r{\rm d}r}|z_t|^2{\rm d}t\right)^{\frac{1}{2}}
=:X. \nonumber
\end{align}
And, it follows from the fact of $\int_0^\tau e^{\int_{0}^{s}\rho_r{\rm d}r}h_s{\rm d}s\in L_\tau^p(0;\R_+)$ and \eqref{supyt1} that
\begin{align}\label{5.13}
\E\left[X^p\right]<+\infty.
\end{align}
In light of $\rho_\cdot\geq \mu_\cdot+\frac{\theta}{2[1\wedge(p-1)]}\nu_\cdot^2$, we have $$\rho'_\cdot\geq\mu_\cdot+\frac{\theta-\theta'}{2[1\wedge(p-1)]}\nu_\cdot^2$$ with $\theta-\theta'>1$. By virtue of $\rho'_\cdot\leq \rho_\cdot$, \eqref{supyt1} and \eqref{5.10*}, we obtain that $\xi\in L_\tau^p(\rho_\cdot;\R)\subseteq L_\tau^p(\rho'_\cdot;\R)$ and the unique weighted $L^p$ solutions of both BSDE \eqref{BSDE*} and BSDE \eqref{BSDE*1} belong to $H_\tau^p(\rho'_\cdot;\R\times\R)$. Thus, according to \cref{thm:3.3} and Lebesgue's dominated convergence theorem, by \eqref{5.113}-\eqref{5.13} we get
\begin{align}\label{eq:6.35}
\lim_{n\rightarrow\infty}\E\left[\sup_{t\in[0,\tau]}\left(e^{p \int_{0}^{t}\rho'_r{\rm d}r}|y_t^n-y_t|^p\right)+\left(\int_0^{\tau} e^{2\int_0^t \rho'_r{\rm d}r}|z_t^n-z_t|^2{\rm d}t\right)^{\frac{p}{2}}\right]=0.
\end{align}
Finally, in light of \eqref{eq:6.35} and the fact of $\int_0^\tau |\rho'_r|{\rm d}r<+\infty$, a similar analysis to \eqref{3.61} yields the desired assertion \eqref{5.131}. As a result, we have proved the dual representation in \cref{thm:6.1}.\vspace{0.2cm}

In the sequel, it can be easily verified from \eqref{Ut} that the operator $\{U_t(\cdot),t\in[0,\tau]\}$ satisfies monotonicity and concavity. And, according to the dual representation along with the structure property of BSDE \eqref{BSDE*}, we can also show the time consistency of $\{U_t(\cdot),t\in[0,\tau]\}$. Therefore, $\{U_t(\cdot),t\in[0,\tau]\}$ defined via \eqref{Ut} constitutes a dynamic concave utility function defined on the space of $L_\tau^p(\rho_\cdot;\R)$.\vspace{0.2cm}

To complete the proof of \cref{thm:6.1}, it remains to prove the attainability of infimum in \eqref{Ut} under three different cases. Let $\tilde{q}_s\in\partial g(s,-z_s),\ s\in[0,\tau]$. It follows from \eqref{eq:6.13} and \ref{A:A3} that $|\tilde{q}_\cdot|\leq \nu_\cdot$. Define
\begin{align}\label{eq:6.36*}
L_t^{\tilde{q}}:=\exp\left(\int_{0}^{t}\tilde{q}_s\dif B_s-\frac{1}{2}\int_{0}^{t}|\tilde{q}_s|^2\dif s\right), \ \ t\in[0,\tau].
\end{align}
We first consider the first case that $\E[\exp\left(\frac{\bar p}{2}\int_0^\tau \nu_t^2 \dif t\right)]<+\infty$ for any $\bar p>1$. In this case, the stochastic exponential process $(L_t^{\tilde q})_{t\in [0,\tau]}$ defined in \eqref{eq:6.36*} is a uniformly integrable martingale. Furthermore, by H\"{o}lder's inequality along with the fact of $|\tilde{q}_\cdot|\leq \nu_\cdot$ we can deduce that for each $\bar p>1$,\vspace{0.1cm}
\begin{align*}
\E\left[\left(L_\tau^{\tilde q}\right)^{\bar p}\right]
&=\E\left[\exp\left(\bar p\int_0^\tau \tilde q_s {\rm d}B_s-\frac{\bar p^3}{2}\int_0^\tau |\tilde q_s|^2 {\rm d}s\right)
\exp\left(\frac{\bar p^3-\bar p}{2}\int_0^\tau |\tilde q_s|^2 {\rm d}s\right)\right]\\
&\leq \left\{\E\left[\exp\left(\bar p^2\int_0^\tau \tilde q_s {\rm d}B_s-\frac{\bar p^4}{2}\int_0^\tau |\tilde q_s|^2 {\rm d}s\right)\right]\right\}^{\frac{1}{\bar p}}
\left\{\E\left[\exp\left(\frac{\bar p^2(\bar p+1)}{2}\int_0^\tau |\nu_s|^2 {\rm d}s\right)\right]\right\}^{\frac{\bar p-1}{\bar p}}\\
&<+\infty,
\end{align*}
which means that $L_\tau^{\tilde{q}}$ admits moments of any order. Thus, by an identical argument to that in the first and second steps we obtain the desired  attainability.

Next, we verify the second case that $\E[\exp\left(\frac{1}{2}\int_0^\tau \nu_t^2 \dif t\right)]<+\infty$ and $\int_0^\tau e^{\int_{0}^{s}\mu_r{\rm d}r}h_s{\rm d}s\leq K$ for some constant $K>0$. In this case, the stochastic exponential process $(L_t^{\tilde q})_{t\in [0,\tau]}$ defined in \eqref{eq:6.36*} is a uniformly integrable martingale. However, it is uncertain that $L_\tau^{\tilde{q}}$ admits moments of any order. Since \eqref{5.02*} remains true, it reduces to prove that $\tilde{q}_\cdot\in \hcal(\xi,f,\mu_\cdot,\nu_\cdot)$ and \eqref{5.05} remains valid. To show that $(\tilde{q}_t)_{t\in[0,\tau]}\in \hcal(\xi,f,\mu_\cdot,\nu_\cdot)$, we need to verify that \eqref{5.06} is true. Indeed, in light of $|\tilde{q}_\cdot|\leq \nu_\cdot$, by an identical way to prove \eqref{eq:6.11*} we can derive that for some $p'\in (1,p)$,
\begin{align}\label{eq:6.37*}
\sup_{s\geq 0}\E\left[\left(e^{\int_0^{s\wedge \tau}\mu_r\dif r}|y_{s\wedge \tau}|L_{s\wedge \tau}^{\tilde q}\right)^{p'}\right]<+\infty,
\end{align}
which means that $(e^{\int_0^{t}\mu_r\dif r}|y_t|L_t^{\tilde{q}})_{t\in[0,\tau]}$ is uniformly integrable. Then, it follows from \ref{A:A3} that
\begin{align}\label{eq:6.38*}
\E_{\Q^{\tilde{q}}}\left[e^{\int_0^\tau \mu_r\dif r}|\xi|+\int_{0}^{\tau}e^{\int_0^s \mu_r\dif r}(h_s+f^{-}(s,\tilde{q}_s))\dif s\right]&\leq 2\E\left[\left(e^{\int_0^\tau \mu_r\dif r}|\xi|+\int_{0}^{\tau}e^{\int_0^s \mu_r\dif r}h_s\dif s\right) L_\tau^{\tilde{q}}\right]\nonumber\\
& \leq 2\E\left[e^{\int_0^\tau \mu_r\dif r}|y_\tau| L_\tau^{\tilde{q}}\right]+2K\E\left[L_\tau^{\tilde{q}}\right]<+\infty.
\end{align}
On the other hand, since the equality \eqref{5.08} remains true for each $n\geq 1$ and $t\geq 0$, by virtue of \eqref{5.08} with $t=0$, \eqref{eq:6.37*} and \eqref{eq:6.38*} we obtain
\begin{align}\label{eq:6.18}
&\sup_{n\geq1}\E_{\Q^{\tilde{q}}}\Big[\int_0^{\tau_n^0} e^{\int_0^{s}\mu_r\dif r} f^{+}(s,\tilde{q}_s)\dif s\Big]\nonumber\\
& \ \ \ \leq |y_0|+\sup\limits_{s\geq 0}\E_{\Q^{\tilde{q}}}\left[e^{\int_0^{s\wedge \tau}\mu_r\dif r}|y_{s\wedge \tau}|\right]+\E_{\Q^{\tilde{q}}}\Big[\int_0^{\tau} e^{\int_0^{s}\mu_r\dif r} f^{-}(s,\tilde{q}_s)\dif s\Big]<+\infty,
\end{align}
which along with Fatou's lemma and \eqref{eq:6.38*} yields \eqref{5.06}. Consequently, $(\tilde{q}_t)_{t\in[0,\tau]}\in \hcal(\xi,f,\mu_\cdot,\nu_\cdot)$. Finally, in light of \eqref{5.06} and \eqref{eq:6.37*},
sending $n\To \infty$ in \eqref{5.08} yields \eqref{5.05}. This case is then proved.

It remains to check the third case that the stochastic exponential process $(L_t^{\tilde q})_{t\in [0,\tau]}$ defined in \eqref{eq:6.36*} is a uniformly integrable martingale and $\int_0^\tau e^{\int_{0}^{s}\mu_r{\rm d}r}h_s{\rm d}s\leq K$ for some constant $K>0$. In this case, it is uncertain that $\E[\exp\left(\frac{1}{2}\int_0^\tau \nu_t^2 \dif t\right)]<+\infty$. However, the desired assertion can be proved just as the second case. The proof of \cref{thm:6.1} is then complete.
\end{proof}

\begin{rmk}\label{rmk:5.4}
We have the following three remarks.
\begin{itemize}
\item [(i)] The main idea in the first two steps of the proof of \cref{thm:6.1} is inspired by \citet[(v) of Theorem 3.2]{FanHuTang2025}, while the proof of the third step is totally novel. The construction of the approaching sequences $\{\nu_\cdot^n\}$ and $\{f_n\}$ and the utilization of \cref{thm:3.1} and \cref{thm:3.3} are two key points.

\item [(ii)] In order to obtain the accessibility of the infimum in \eqref{Ut}, imposing some extra moment integrability conditions on $\int_0^\tau \nu_t^2 \dif t$ is quite natural since when $\tau$ is a positive constant and $\nu_\cdot$ is a nonnegative constant, these conditions are trivially fulfilled. However, the validity of assertion (iii) in \cref{thm:6.1} does not require these moment integrability conditions.

\item [(iii)] It is obvious that the condition that $\int_0^\tau e^{\int_{0}^{s}\mu_r{\rm d}r}h_s{\rm d}s\leq K$ for some constant $K>0$ appearing in the assertions (ii) and (iii) of \cref{thm:6.1} can be relaxed to the following condition:
    \begin{align*}
    \E\left[\left(\int_0^\tau e^{\int_{0}^{s}\mu_r{\rm d}r}h_s{\rm d}s\right)L_\tau^{\tilde q}\right]<+\infty.
    \end{align*}
    In particular, when $h_\cdot\equiv 0$, this condition is trivially satisfied.
\end{itemize}
\end{rmk}

\setlength{\bibsep}{2pt}

\end{document}